\documentclass[bj,authoryear]{imsart}

\usepackage{amsfonts}
\usepackage{amsmath}
\usepackage{amsthm}
\usepackage{amssymb}
\usepackage{multirow}
\usepackage{multicol}
\usepackage{mathrsfs}
\usepackage{enumitem}
\usepackage{times}
\usepackage{graphicx}
\usepackage{subcaption}
\usepackage{bbm}

\startlocaldefs
\theoremstyle{plain}
\newtheorem{theorem}{Theorem}
\newtheorem{lemma}{Lemma}[section]
\newtheorem{proposition}{Proposition}
\newtheorem{definition}{Definition}[section]
\newtheorem{corollary}{Corollary}
\newtheorem{assumption}{Assumption}

\theoremstyle{remark}
\newtheorem{example}{Example}
\newtheorem{remark}{Remark}

\def\aff{{\sf aff}}

\def\bM{\mathbf{M}}

\def\bX{\mathbf{X}}

\def\bZ{\mathbf{Z}}

\def\bTheta{{\boldsymbol\Theta}}

\def\ba{\mathbf{a}}
\def\bb{\mathbf{b}}
\def\bc{\mathbf{c}}

\def\bv{\mathbf{v}}

\def\bx{\mathbf{x}}

\def\balpha{{\boldsymbol\alpha}}
\def\bbeta{{\boldsymbol\beta}}

\def\btheta{{\boldsymbol\theta}}

\def\bmu{{\boldsymbol\mu}}

\def\bphi{{\boldsymbol\phi}}

\def\bbE{\mathbb{E}}

\def\bbP{\mathbb{P}}

\def\bbR{\mathbb{R}}
\def\bbS{\mathbb{S}}

\def\bbZ{\mathbb{Z}}

\def\cA{\mathcal{A}}

\def\cC{\mathcal{C}}

\def\cE{\mathcal{E}}

\def\cG{\mathcal{G}}
\def\cH{\mathcal{H}}
\def\cI{\mathcal{I}}

\def\cL{\mathcal{L}}
\def\cM{\mathcal{M}}
\def\cN{\mathcal{N}}

\def\cP{\mathcal{P}}
\def\cQ{\mathcal{Q}}

\def\cS{\mathcal{S}}
\def\cT{\mathcal{T}}

\def\cX{\mathcal{X}}

\def\Cov{{\sf Cov}}

\def\conv{{\sf conv}}

\def\diag{{\sf diag}}
\def\diam{{\sf diam}}

\def\intr{{\sf int}}

\def\KL{{\sf KL}}
\def\extr{{\sf extr }}

\def\norm#1{\left\|#1\right\|}

\def\supp{{\sf supp}}

\def\TV{{\sf TV}}

\newcommand{\innerprod}[1]{\left\langle#1\right\rangle}

\def\bTheta{\boldsymbol{\Theta}}

\endlocaldefs

\begin{document}

\begin{frontmatter}

\title{Learning Mixtures of Nonparametric and Convolutional Measures on Effectively Low-dimensional Affine Spaces}
\runtitle{Convolutional measures on effectively low-dimensional spaces}

\begin{aug}
\author[A]{\inits{}\fnms{Sunrit}~\snm{Chakraborty$^\star$}\ead[label=e1]{sunritc@umich.edu}}
\author[B]{\inits{}\fnms{XuanLong}~\snm{Nguyen}\ead[label=e3]{xuanlong@umich.edu}}
\address[A]{Duke University\printead[presep={,\ }]{e1}}
\address[B]{University of Michigan, Ann Arbor\printead[presep={,\ }]{e3}}

\end{aug}

\begin{abstract}
In this paper, we develop a finite mixture of convolutional distributions, a statistical model to analyze continuous data distributed approximately on a mixture of low-dimensional affine subspaces. The observations are assumed independent and identically distributed from the mixture of distributions, where each component arises from a convolution of a distribution supported on a low-dimensional subspace with a suitable noise kernel. We discuss theoretical properties of such class of models, including identifiability under very general conditions - in particular, showing that the minimal representation for such mixtures is uniquely identifiable in a semi-parametric setting. We further study the posterior contraction rates for the parameters for a parametrized class of such models where the supports of the component mixing measures are assumed to be convex polytopes under a suitable well-specified Bayesian regime. This still requires developing novel inverse bounds for problems involving a nested mixture structure, where the mixture kernel is itself another continuous mixture. Our approach for both the identifiability theory and posterior contraction rates is to exploit the geometric structure of the underlying support of the latent measures. Apart from applications in end-member analysis, spectral unmixing and topic models, this study provides a grounded framework for subspace clustering with the goal of exploring conditions for learning multiple latent low-dimensional structures.  We illustrate our findings through careful simulation study, which also includes developing new algorithms for such class of models
\end{abstract}

\begin{keyword}
\kwd{mixture model}
\kwd{low-dimensional simplex}
\kwd{subspace clustering}
\kwd{identifiability}
\kwd{inverse bound}
\kwd{consistency}
\kwd{contraction rate}
\end{keyword}

\end{frontmatter}

\section{Introduction}\label{sec:introduction}
Mixture models are some of the most powerful tools to a statistician for understanding heterogeneity in data, while also retaining some aspects of homogeneity within a cluster. Such models are characterized by discrete latent variables indicating the component association of the observations and each component is characterized by a probability distribution, typically from a parametric family of distributions, to be referred to as the mixture kernel. Finite mixture models have been successfully applied in various fields including astronomy, biology, genetics, economy, social sciences and engineering \cite{mclachlan2000finite}. While the Gaussian kernel is the most popular choice due to its simplicity, other kernels have also been used in the literature including mixtures of T-distribution \cite{burgess2020finite}, skew-normal \cite{lin2007finite} and Gamma \cite{wiper2001mixtures}.  Finite mixture models have been used for both density estimation as well as clustering and understanding sub-population structure. Since the parameters capture the essential information related to sub-populations, understanding parameter estimation is of fundamental importance to maximize their utility. Recently, theoretical understanding of such mixture models, including the study of identifiability and estimation rates has seen rapid development. 

In the current era of statistics, often the data resides in a relatively high-dimensional ambient space, but the sub-population level homogeneity is related to some latent low-dimensional structure, which is the core idea behind the current paper. In this work, we will study finite mixture model with nonparametric components of the form
\begin{equation}
    p(x) = \sum_{k=1}^K \pi_k p_k(x), \quad x\in \bbR^D,
\end{equation}
where the component distributions $p_k$ are non-parametrically specified as $p_k = G_k \star \phi_k$ for a convolutional kernel with density $\phi_k$ on $\bbR^D$, and $G_k$'s are probability measures such that $\dim \supp (G_k) < D$. Such models generalize many existing latent variable models include the Mixture of Probabilistic Principal Component Analysis (MPPCA) \cite{tipping1999mixtures} and Mixture of Factor Analyzers (MFA) \cite{mclachlan2003modelling}, and they both can be seen as particular cases for the general formulation above. 
The first question that we seek to answer in this work is whether it is possible to extend such models going beyond the Gaussian assumptions used in MPPCA and MFA while retaining interpretability. 
The extension of finite mixture models to general nonparametric settings has been challenging due to extreme non-identifiability issues, for which existing results require strong assumptions \cite{bordes2006semiparametric, hunter2007inference, jochmans2017inference,aragam2020identifiability}. In the first part of the paper, we demonstrate that identifiability of the nonparametric components can be recovered even if the component supports overlap, when restricted to such low-dimensional structure. 

The second part of the paper deals with a specific natural parametrization of the component distributions, drawing inspiration from archetypal analysis \cite{cutler1994archetypal} for clustering and topic models \cite{blei2003latent}. In such applications, each observation can be seen as a noisy version of the convex combination of certain end-members (called archetypes or topics depending on context), i.e., an observation $x\in\bbR^D$ is expressed as $x \approx \sum_j \beta_j\theta_j$ where $\theta_1,\dots,\theta_d$ represent the end-members. $\beta\in\Delta^{d-1}$ can be seen as a latent variable in this case and endowed with some probability distribution on the probability simplex. Note that such modeling imply that while $x\in\bbR^D$, it is roughly lying on a convex polytope of dimension no greater than $d-1$. We consider a mixture setting, where each component has such a distribution. While such ideas exist in the machine learning literature (such as sparse non-negative matrix factorization or sparse subspace clustering), the current works deals with a completely probabilistic latent variable model -- we explore connections of our model to several others in the sequel. The goal of this part of the paper is to develop tools to study the estimation rate for such mixture models. To the best of our knowledge, this is the first attempt to understand parameter estimation in complex latent variable models, going beyond finite mixtures. The combination of discrete and continuous latent variables make this problem extremely challenging, since existing methods cannot be used directly. 

We briefly discuss the technical tools used in our analysis. The vast literature on identifiability in mixture models and associated latent variable models (finite mixtures \cite{teicher1961identifiability, teicher1967identifiability}, product mixture \cite{vandermeulen2015identifiability}, topic models \cite{anandkumar2012spectral}, latent attribute model \cite{gu2023identifiability}, non-parametric mixtures \cite{aragam2020identifiability}) illustrates that for mixtures, identifiability is related to linear independence of the component kernel. In our approach, we exploit the geometric structure of probability measures supported on distinct affine spaces to establish identifiability of the components. In the presence of convolutional noise, we use characteristic functions to extract the noise parts and then isolate the components based on a geometric approach. While identifiability in mixture models require linear independence of the component probability kernel, understanding estimation rates need a finer characterization. Optimal contraction rates for the latent mixing measure for the univariate data was first presented in \cite{chen1995optimal} and later followed up by for multivariate data and wider classes of kernel functions in various works including \cite{rousseau2011asymptotic}, \cite{ishwaran2001bayesian} and in more recent papers like \cite{ho2016strong}. To overcome the \textit{label-switching} non-identifiability issue associated with mixture models, it is often helpful to view a mixture model through the lens of a latent mixing measure $p_G(x) = \int f(x|\theta)dG(\theta)$, where, for GMM, $f$ is the Gaussian location-scale kernel and $G=\sum_k \pi_k \delta_{\mu_k,\Sigma_k}$ is the associated mixing measure (note that this is a discrete probability measure), which captures all the information about the latent structure. The line of work \cite{nguyen2013convergence}, \cite{nguyen2015posterior}, \cite{ho2016strong}, \cite{wei2020convergence}, \cite{guha2021posterior} utilize convergence in Wasserstein metric on the space of this latent mixing measure to study parameter estimation, often using an \textit{inverse bound} (lower bounding the metric on the parameter space in terms of total variation metric on the distribution space) as the core tool to translate density estimation results to parameter estimation. In our case, the latent mixing measure turns out to be more complex, since it is no longer discrete. We use a similar approach and the crux of the analysis is establishing a suitable inverse bound. For this we use geometric ideas associated with convex polytopes to disentangle the components and isolate the vertices to obtain such results. Our inverse bound provides stronger control compared to that in the general setting in \cite{nguyen2013convergence}, and leads to a parametric estimation rate for the parameters. Although we present estimation rates in the Bayesian setting, it is worth noting that the inverse bound allows such results to be useful with an MLE-based approach as well.

The main contributions of this paper can be summarized as follows. Firstly, we study a class of probabilistic semi-parametric mixture models for analyzing data lying approximately on a mixture of (possibly intersecting) low-dimensional affine spaces -- our results establish identifiability for such general mixtures and also discuss uniqueness of the minimal representation of such models. Secondly, we utilize an intuitive and natural parametrization for such class of models to study posterior contraction rates for the parameters, under a well-specified setting from a Bayesian perspective - this is accomplished by developing a novel inverse bound for such class of models. The posterior analysis considered in this paper is not under a `high-dimensional' setting in that we focus on the fixed ambient dimension regime. We discuss connections of our class of models to several others in the literature. Finally, we develop some algorithms for inference in such models and illustrate the theoretical findings through simulations. 

The remainder of the paper is structured as follows: Section \ref{sec:model}, we introduce the model and discuss various ways to view the model and present a unifying framework to compare it with some of the existing models that can be viewed under the same lens. In Section \ref{sec:identifiability}, we discuss non-identifiability issues in such class of models and prove that under fairly mild conditions, identifiability can be recovered. In Section \ref{sec:posterior_contraction}, we simplify the general model under a specific parametrization and provide a detailed analysis of the posterior contraction rates for the parameters under a suitable metric. We also discuss the consistency in identifying the number of components under the Bayesian paradigm. In Section \ref{sec:numerical study}, we first describe the MCMC algorithm for this parametrized family of models and then, present numerical study to demonstrate the effectiveness of this model using simulations and real data. 

\subsection*{Notation}\label{sec:notation}
We denote the $d-$dimensional Euclidean space by $\bbR^d$ and $\norm{\cdot}, \norm{\cdot}_1$ respectively denotes the Euclidean norm and the $1-$norm. Under a metric $d$ on a space $\cX$, for $x\in\cX, A\subset \cX$, we let $d(x,A):=\inf_{y\in A} d(x,y)$ to denote the distance of the point $x$ from the set $A$. $B_d(x,r)$ denotes the open ball $\{y\in \cX: d(y,x)<r\}$. $\Delta^{K-1}:=\{p\in\bbR^K: \sum_i p_i = 1, p_i\geq 0 \, \forall i\}$ denotes the $(K-1)-$dimensional probability simplex. For $\theta_1,\dots,\theta_K\in\bbR^d$, $\conv(\theta_1,\dots,\theta_K)$ denotes the convex hull of these points; similarly $\aff(\theta_1,\dots,\theta_K)$ denotes their affine hull. A general simplex is the convex hull of $K$ affinely independent points in $\bbR^d$ and is parametrized in terms of its extreme points (vertices). The dimension of a simplex is the dimension of its affine hull, which in turn is the dimension of the unique linear subspace which is its translation. A polytope is a more general object, formed as the convex hull of any $K$ points (not necessarily affinely independent). We call $v$ a vertex of a polytope if $v$ cannot be written as a non-degenerate convex combination of any two other points in that polytope. The affine hull and affine dimension of a polytope are defined similarly as for simplexes. We use $[K]$ to denote the set $\{1,\dots,K\}$ for $K\in\bbZ_+$. For an array of objects $\Theta=(\theta_1,\dots,\theta_K)$ and a permutation $\sigma:[K]\to[K]$, we denote $\sigma(\Theta)=(\theta_{\sigma(1)},\dots,\theta_{\sigma(K)})$. 

We use the notations $d_{\TV}(p,q), h(p,q)$ and $K(p,q)$ to denote the total variation, Hellinger metric and the Kullback-Leibler divergence respectively between probability densities $p,q$. We generally use lower case to denote the density of a particular measure, e.g. if $P$ is a measure, then $p$ is its density (with respect to Lebesgue on the ambient space, unless specified). $\cL_n$ denotes the Lebesgue measure on $\bbR^n$ and $\cH_n$ denotes the $n-$dimensional Hausdorff measure. Use $\mu\ll\nu$ to indicate that $\mu$ is absolutely continuous with respect to $\nu$. We use $\cN_d(\mu,\Sigma)$ and $ \text{Dir}_K(\alpha)$ to denote the $d-$dimensional Gaussian distribution and the Dirichlet distribution on $\Delta^{K-1}$ respectively and $\text{Cat}, \text{Mult}$ to denote the Categorical and Multinomial distributions. $\phi_d(x;\mu,\Sigma)$ is used as the density of $\cN_d(\mu,\Sigma)$ at $x$. For two probability measures $G$ and $Q$, denote their convolution as $G\star Q$, which is the law of $X+Y$ where $X\sim G\perp Y\sim Q$. For a probability measure $P$ on $\bbR^D$, we recall that its support $\supp(P)$ is defined as the smallest closed set $\cS$ such that $P(\cS)=1$. Write $\mu+Q$ to denote the translation of the measure $Q$ by $\mu$, i.e., the distribution of $Y=\mu+X$, where $X\sim Q$. Lastly, we denote the push-forward of $\mu$ under transformation $f$ as $f_{\#}\mu$, defined as $f_{\#}\mu= \mu\circ f^{-1}$.

\section{Model}\label{sec:model}
We study a class of mixture models, where each component is a convolutional distribution around a low-dimensional probability distribution. We start by formalizing these notions. 

\begin{definition}\label{def:low-dim}
    For a probability measure $G$ on $\bbR^D$, we say $G$ is \textbf{continuously supported} on $\cS$ if $\supp(G)=\cS$ and $G\ll \cH_d$ restricted to $\cA=\aff \cS$, where $d=\dim \cA$. 
\end{definition}

For $\cS\subset \bbR^D$, we say $\cS$ is \textbf{low-dimensional} if $\dim \cA < D$ and we identify $\dim(\cA)$ as the \textit{dimension of } $\cS$. We call a probability measure $G$ on $\bbR^D$ low-dimensional if $G$ is continuously supported on $\cS$ with $\dim \cA<D$. Note that the above definition implies that if $G$ is continuously supported on $\cS$ with $d=\dim \cA$, then the Hausdorff dimension of $\cS$ is precisely $d$. In such a case, we also say $G$ is $d-$dimensional. The condition $G\ll \cH_d$ ensures that the measure is non-degenerate within its affine hull, in the sense that any subset of strictly lower Hausdorff dimension is a null set under $G$. This property plays a crucial role in the identifiability arguments in Section \ref{sec:identifiability}. 

\begin{example}\label{example:1}
    Here are a few examples of low-dimensional probability measures.
    \begin{enumerate}[label=(\roman*)]
    \item For any $B\in\bbR^{D\times d}$ with full column rank, the probability distribution $G=\cN_D(\mu, BB^\top)$ has support $\cS=\mu + \text{col}(B)$, which is itself an affine space i.e. $\cA=\cS$ and hence, $G$ is $d-$dimensional. 
    \item For any set of $d$ points $\theta_1,\dots,\theta_d\in\bbR^D$ and any probability distribution $\mu$ on $\Delta^{d-1}$ such that $\mu\ll\cH_{d-1}$ on $\Delta^{d-1}$, take $G=f_{\#}\mu$ where $f(\beta)=\sum_j \beta_j \theta_j$, $f:\Delta^{d-1}\to \bbR^D$. This $G$ is supported on a convex polytope $\cS=\conv(\theta_1,\dots,\theta_d)$ and $G\ll \cH_{d'}$ on $\cA=\aff \cS$ with $d'\leq d-1$ being the dimension of $\cS$. Thus $G$ is $d'-$dimensional. 
    \item Consider a sequence $\{\epsilon_n\}$ such that $\sum_n \epsilon_n<1$ (e.g. $\epsilon_n=2^{-(n+1)}$). Start with $C_0=[0,1]$; at step $n$, from each interval in $C_{n-1}$ remove an open middle interval whose length is an $\epsilon_n-$fraction of that interval; let $C=\cap_n C_n$. Note that $C$ is closed, nowhere dense with Lebesgue measure $|C|=\prod_n (1-\epsilon_n)>0$ and empty interior. Now define $\cS=C\times [0,1]^{d-1}\times \{0\}^{D-d}\subset\cA=\bbR^d\times \{0\}^{D-d}\subset \bbR^D$ and $G$ a uniform distribution on $\cS$, then $G\ll \cH_d$ on $\cA$ with density $g(x)=|C|^{-1}1_{\cS}(x)$ with respect to $\cH_d$ on $\cA$. So $G$ is $d-$dimensional in the sense defined above, however, its support $\cS$ contains no open subset of $\cA$.
\end{enumerate}
\end{example}

Next, let us define convolutional distributions which will be used to capture the essence of decomposing latent structure and noise in our model. 

\begin{definition}\label{def:conv}
    We call a probability measure $P$ on $\bbR^D$ \textbf{convolutional} around $G$ if $P=G\star Q$ for some probability measure $Q$ on $\bbR^D$ with mean 0 and density $q$ with respect to $\cL_D$.
\end{definition}

If $P=G\star Q$ as in the above definition, the density of $P$ with respect to $\cL_D$ is given by $p(x)=\int q(x-\eta)dG(\eta)$, which exists even if $G$ is not absolutely continuous with respect to $\cL_D$. If $P=G\star Q$ as in the definition above, we call $Q$ the \textit{noise} kernel, to distinguish it from the mixture kernel to appear below. We will consider a parametric family of zero-mean probability distributions $\{Q_\phi\}$ for the noise kernel. Examples include the multivariate normal, the multivariate Cauchy or the multivariate Laplace distribution. 

Now we are ready to describe the class of mixture models. Let $\cP_{D,d}$ be the collection of all $d$-dimensional probability measures in $\bbR^D$, and $\cP_D= \cup_{d<D} \cP_{D,d}$ be the collection of all low-dimensional probability measures in $\bbR^D$. Given a noise kernel family $\cQ=\{Q_{\phi}:\phi\in \Phi\}$, let $\cP_D(\cQ)=\{G\star Q_{\phi} : G\in \cP_D, \phi\in \Phi\}$. Finally, we consider (finite) mixtures over $\cP_D(\cQ)$, i.e., probability distributions of the form
$$P = \sum_{k=1}^K \pi_k \mu_k, \quad \mu_k\in \cP_D(\cQ), \, \pi\in\Delta^{K-1}.$$
We note that each component measure $\mu_k$ in such a mixture is actually absolutely continuous with respect to $\cL_D$, but has the representation $\mu_k=G_k\star Q_{\phi_k}$, where $G_k$ captures the latent low-dimensional structure. 

This mixture can be best understood from the following hierarchical model. Assume that the observations arise from $K$ latent sub-populations -- for the $k-$th subpopulation, the main source of variability within this subpopulation is captured by a latent low-dimensional distribution, $G_k\in \cP_D$. To model the observations $X_1,\dots,X_n\in \bbR^D$, we assume that for each observation $X_i$, firstly, a component-indicator variable $z_i$ taking values in $[K]$ is drawn from a categorical distribution with parameter $\pi\in\Delta^{K-1}$ denoting the mixture probabilities. Given $z_i=k$, a further latent variable $\eta$ is drawn from the sub-population specific low-dimensional measure $G_k$ and finally, conditional on $\eta$, the observation $X_i$ is drawn from $\eta + Q_{\phi_k}$. The hierarchical structure of this model is shown below.

\begin{equation}
\begin{aligned}\label{model}
    z_i | \pi &\overset{\text{iid}}{\sim} \text{Cat}(\pi),\\
    \eta_i | \cG, z_i=k &\overset{\text{ind}}{\sim} G_k, \\
    \epsilon_i |\bphi, z_i=k &\overset{\text{ind}}{\sim} Q_{\phi_k}, \\
    X_i &= \eta_i + \epsilon_i,
    \end{aligned}
\end{equation}

where $\cG=(G_1,\dots,G_K)$ is the collection of the low-dimensional measures with $G_k\in \cP_D$ and $\bphi=(\phi_1,\dots,\phi_K)$ are the parameters for the noise kernel. We shall refer to $G_1,\dots,G_K$ as the \textit{component latent measures} since the distribution of the $k$th component can be expressed as $p_k(x|\cG,\bphi) = \int q_{\phi_k}(x|\eta) dG_k(\eta)$, which forms the mixture kernel in the overall mixture. $z_i$ and $\eta_i$ are considered latent variables in this model, since we only observe $X_1,\dots,X_n$. While $z_i$ is the latent sub-population indicator, $\eta_i$ captures the main variability within the cluster, while $\epsilon_i$ captures the noise. Marginalizing over these variables, the probability measure defined by the above model becomes
\begin{equation}\label{eq:model measure}
    P_{\cG,\pi,\bphi} = \sum_{k=1}^K \pi_k (G_k \star Q_{\phi_k}).
\end{equation}
Denoting $q_\phi$ to be the density of $Q_\phi$ with respect to $\cL_D$, the density of a particular observation $X$ under this model with respect to $\cL_D$ is given by
\begin{align}\label{eq:density}
    p_{\cG,\pi,\bphi}(x) = \sum_{k=1}^K \pi_k \int q_{\phi_k}(x-\eta) dG_k(\eta).
\end{align}
The joint density of all the observations is then simply the product since they are assumed independent and identically distributed $p(X_1,\dots,X_n|\pi,\cG,\bphi) = \prod_{i=1}^n p_{\cG,\pi,\bphi}(X_i)$. Other than specific choices for $\{Q_{\phi}\}$ and $\cG$, the likelihood involves an intractable integral (for the marginalization over the continuous latent variables $\eta_i$) and a sum (for the marginalization over the discrete latent variables $z_i$) -- this nested mixture structure is the primary source of both modeling flexibility and analytical difficulty.

Note that the component distributions are semi-parametric with a non-parametric low-dimensional part convolved with a full-dimensional noise kernel. While each component density is fully supported on $\bbR^D$,  the underlying latent geometric structure is encoded in the supports of $G_k$. This interplay between intrinsic low-dimensional geometry and full-dimensional convolution is the key feature that enables identifiability, in contrast to general nonparametric mixtures where such recovery is impossible.
Our approach in Section \ref{sec:identifiability} involves imposing a mild geometric assumption on the supports of $G_k$, ensuring only that they are on distinct affine spaces. This allows us to disentangle the components, while allowing the component mixing measures to be general, with potentially intersecting supports. 

\begin{figure}
    \centering
    \includegraphics[clip, trim=0.5cm 1.3cm 0.5cm 3.1cm, width=0.7\linewidth]{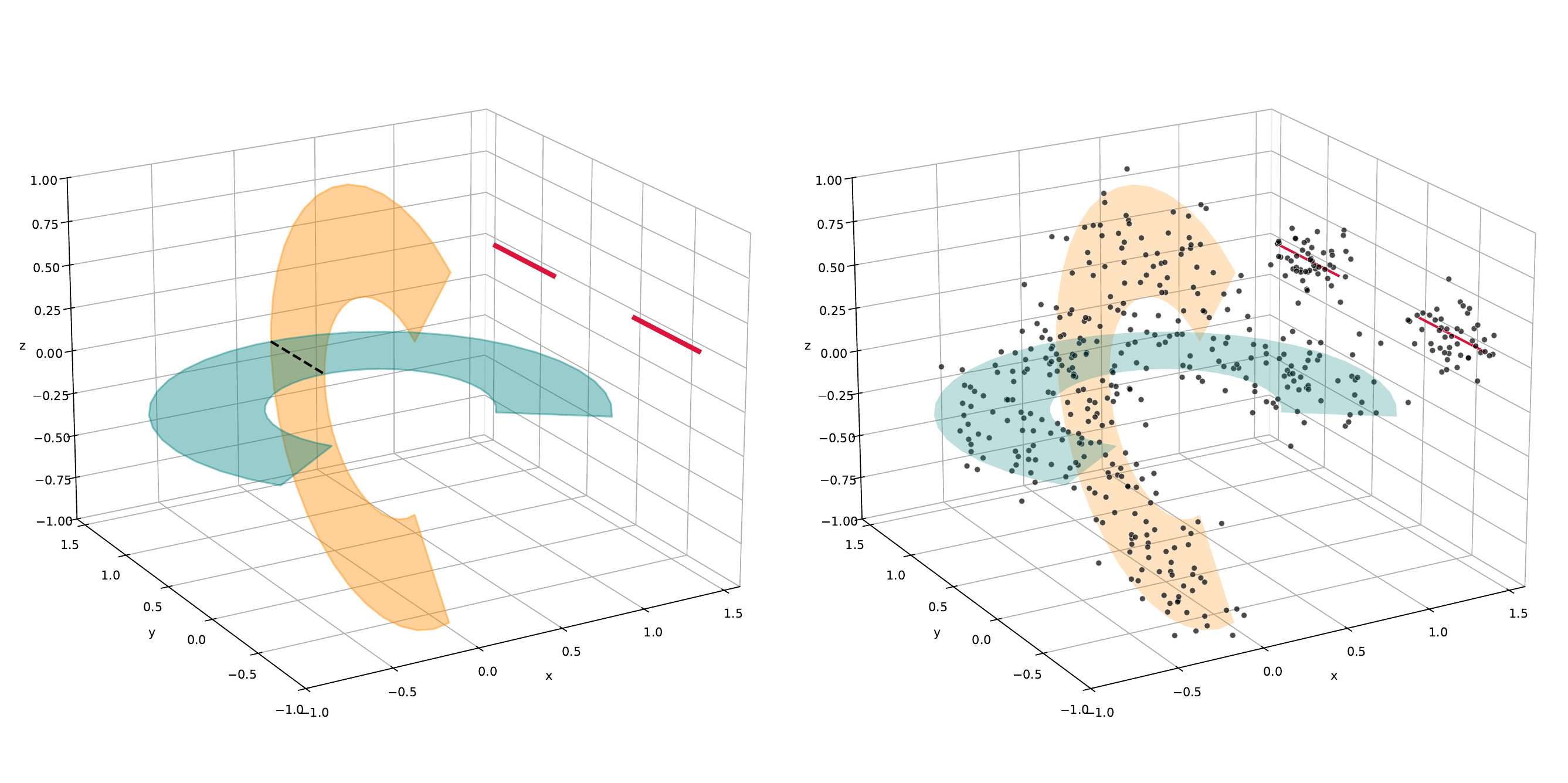}
    \caption{Example of model in $D=3$ (for visualization) with $K=3$ components: two $2-$dimensional components $G_1, G_2$ with supports $\cS_1, \cS_2$ shown in orange and teal colors (intersecting along the $1-$dimensional line segment shown with dashed line), and one $1-$dimensional component $G_3$, whose support is the union of two line segments on the common affine space (a line), shown in red. The measure $G_1$ (orange) is supported on a plane parallel to the $Y-Z$ plane, while $G_2$ (teal) is supported on a plane parallel to the $X-Y$ plane. The left panel shows the latent intrinsic low-dimensional structure, the right panel shows a scatter plot of generated observations from the overall mixture model with noise. In this example, $G_1$ is uniform on its U-shaped support, but the model allows arbitrary non-parametric $G_1\ll \cH_2$ restricted to the affine space containing the support.}
    \label{fig: non convex example model}
\end{figure}

\begin{remark}\label{remark:model}
    We make the following remarks regarding the above model:
    \begin{enumerate}[label=(\roman*)]
        \item This model can be seen as a \textit{semi-parametric mixture} model, where the mixture probabilities and component specific scale parameters $\phi$ form the parametric part, while the component measures $\cG$ form the non-parametric part. In general, such a mixture model is not identifiable -- the special structure which allows us to overcome this issue is the fact that each $G\in\cG$ is low-dimensional.
        \item We refer to $P_{\cG, \pi}=\sum_{k=1}^K \pi_k G_k$ to be the \textit{noiseless} model, since this is precisely the distribution of the latent $\eta_i$ in Equation \eqref{model}. The observations in our model takes a further convolution with respect to the noise model $\{Q_\phi\}$. The probability measure $P_{\cG,\pi}$ is singular with respect to $\cL_D$. Note that if $\phi_1=\dots=\phi_K$ and the common convolutional distribution is $Q$, the density can be expressed as $p(x)=\int q(x-\eta) dP_{\\cG,\pi}(\eta)$, and hence $P_{\cG,\pi}$ can be thought of as the latent mixing measure in this case.
        \item The overall model is \textit{not} a convolution -- each component of the mixture is a convolution of a low-dimensional measure $G_k$ with a noise-model $Q_k$. This enables a more expressive class of models, while rendering its analysis more challenging.
    \end{enumerate}
\end{remark}

Notice that the standard finite mixture model of the form $\sum_k \pi_k q_{\phi_k}(x-\mu_k)$ can also be seen as a special case of the above model with $G_k=\delta_{\mu_k}$ as the 0-dimensional latent component measure. A general example is illustrated in Figure \ref{fig: non convex example model}. We conclude this section with an important parametric special case of the above class of mixtures, which we will study further in Section \ref{sec:posterior_contraction}.
\begin{example}\label{example:polytope}
    As discussed in Example \ref{example:1}(2), for the $k-$th component, we parameterize $G_k$ by $\Theta_k=(\theta_{k,1},\dots,\theta_{k,d})$, the vertices of a convex polytope $\cS_k$ and take $G_k=(f_k)_{\#} (\mu_k)$, where $f_k:\beta\to \sum_{j=1}^d \beta_j\theta_{k,j}$, and $\mu_k$ is a probability distribution on $\Delta^{d-1}$, for example, $\mu_k=\text{Dir}(\gamma_k)$. Note that this construction ensures that $G_k$ is $(d-1)-$dimensional and is supported on a convex polytope. This defines the kernel in our general model \ref{model}, using $Q_{\phi}=\cN(\cdot|\boldsymbol{0}, \phi^2 I_D)$ as the noise model. The intuition behind such a latent measure $G_k$ stems from the fact that the signal $\eta\sim G_k$ can be seen as a (random) convex combination of the \textit{end-members} $\theta_{k,1},\dots,\theta_{k,d}$ representing some extremal behavior within the subpopulation - this relates such a latent measure to convex unmixing or archetypal analysis models.
\end{example}

\subsection{Related Models}\label{sec:related models}
There have been numerous effort at extending the mixture modeling framework, particularly to high-dimensions with nonparametric components. The proposed model is related to many such models in the existing literature and can be viewed as a semi-parametric extension to them.

\subsubsection*{Non-parametric Mixtures}
Extending finite parametric mixture model to the non-parametric case is challenging due to severe non-identifiability issues as discussed in Section \ref{sec:introduction}. Recent work focuses on extensions when observations are grouped according to the latent class assignments \cite{vandermeulen2019operator, ritchie2020consistent}, when the covariates carry certain latent structures \cite{allman2009identifiability, gassiat2016nonparametric}, when the component distributions are symmetric \cite{bordes2006semiparametric, hunter2007inference}, products of
univariate distributions \cite{hall2003nonparametric, elmore2005application} or well-separated \cite{aragam2020identifiability, aragam2023uniform}. The last two works are closest to ours and consider a general non-parametric setting of the form $p(\cdot) = \sum_k \pi_k \left(G_k(\cdot-\mu_k) \star \phi_{\sigma} \right)$, where $G_k$ are compactly supported. Note that in our model \eqref{eq:density}, each component is allowed to have different levels of noise variance, $q_{\phi}$ is allowed to be non-Gaussian and the supports are not required to be compact. \cite{aragam2023uniform} assume a strong separation between the supports of the underlying $G_k'$s (assumption D3), which is required if there are no other restriction on $G_k'$s. In this work, we explore a different avenue by restricting $G_k$ to be low-dimensional, which allows us to identify the components under much milder assumptions such as Assumption \ref{assume:A} or \ref{assume:B}. 
We note that convolutional mixtures have been studied in \cite{nguyen2013convergence} -- however, the model \eqref{eq:density} can be seen as a mixture of (continuous) mixtures, and the methods in \cite{nguyen2013convergence} cannot be directly used to establish identifiability of each component in such nested mixtures.

\subsubsection*{Mixture of Probabilistic PCA and Mixture of Factor Analyzers}
MPCCA \cite{tipping1999mixtures} and MFA \cite{mclachlan2003modelling} can be seen as extensions of probabilistic models with continuous latent variables Probabilistic PCA \cite{tipping1999probabilistic} and Factor Analysis \cite{kim1978introduction} respectively, where the idea is to use the latent variables to capture a low-dimensional subspace on which the data primarily reside, thereby combining clustering and dimensionality-reduction. Assuming a $d$ dimensional latent space and marginalizing the latent variables, MPPCA leads to the following density
$$p_{\text{MPPCA}}(x) = \sum_k \pi_k \phi(x\mid \mu_k, \Theta_k\Theta_k^\top + \sigma^2 I),$$
where $\bTheta=(\Theta_1,\dots,\Theta_K)$, $\bmu=(\mu_1,\dots,\mu_K)$. $\mu_k\in\bbR^D$ is the mean associated with the $k$-th component, $\Theta_k$ is a $D\times d$ factor matrix  and $\pi$ is the mixture probabilities.
This is a parametric special case of our model \eqref{eq:density} -- $\cN_D(\cdot|\mu, \Theta\Theta^\top + \sigma^2 I) = N_D(\cdot|\mu, \Theta\Theta^\top)\star N_D(\cdot|0, \sigma^2 I)$ (the first measure is $d-$dimensional, supported on the affine space $\mu+\text{col}(\Theta)$ discussed in Example \ref{example:1}(1)) -- where the only noticeable difference is the distribution on the continuous latent variable. 
Identifiability in such models can be directly seen from corresponding results for Gaussian mixture models, up to orthogonal transformations of $\Theta_k$'s. These models has been extended in various directions \cite{collas2021probabilistic, han2015covariance, archambeau2008mixtures, xu2023hemppcat, lee2021mixtures}, however they still mostly use Gaussian (or Student's $t$) distribution for the latent variable $\beta$, which result in an elliptic distribution for the components. In contrast, the parametrization in this work allows a relaxation from elliptic distributions and can accommodate skewness, component-specific variance and most importantly, quite general supports for the latent measure, at the cost of the intractable likelihood.

\subsubsection*{Other connections}
The current model is also related to mixed membership mixtures, non-negative matrix factorization (NMF) methods and subspace clustering. We present a brief discussion here - see Appendix \ref{app:related models} for further details. Topic models often utilize the geometric structure where documents are seen as points on a low-dimensional convex polytope (often called topic polytope) whose vertices are the topics -- such a viewpoint has been explored for both theoretical analysis as well as computational methods \cite{nguyen2015posterior, yurochkin2016geometric, yurochkin2019dirichlet, chen2023learning}. Such low-dimensional or convex structure forms the basis for NMF \cite{ding2008convex} and archetypal analysis \cite{cutler1994archetypal, javadi2020nonnegative}. Such distributions can all be seen as special cases of the generic low-dimensional distribution with convolutional noise, considered in this work. The mixture structure in our work extends such models to hierarchical versions, such as hierarchical topic models \cite{griffiths2003hierarchical, chakraborty2024learning}. The current model can also be seen as a probabilistic model for subspace clustering, providing a general and interpretable extension of methods like $K-$subspaces \cite{tseng2000nearest, zhang2009median}, generalized PCA \cite{vidal2005generalized} and spectral clustering methods like sparse subspace clustering \cite{elhamifar2013sparse}.

Based on our discussion, the model considered in this paper is a rich class of semi-parametric mixtures that unify several statistical and machine learning frameworks. While we discuss some algorithms for inference for the proposed model, the focus of this paper is to understand the class of mixture models in Equation \eqref{eq:density} in terms of identifiability of the latent structure and noise kernel parameters and characterize the rate at which the parameters can be learned using a Bayesian formulation for the special parametric case discussed in Example \ref{example:polytope}.

\section{Identifiability}\label{sec:identifiability}
In this section, we study identifiability in the class of models introduced in Section \ref{sec:model}. In particular, our goal is to identify minimal conditions on the underlying latent component measures so that up to a permutation of labels, each component can be identified and both the latent measure and the noise kernel parameter can be learned. Our approach is based on a geometric separation condition on the collection of latent component measures, and we exploit the intrinsic low-dimensional structure to derive our results.

\subsection{Identifiability in the noiseless model}

We start by focusing on the noiseless model $P_{\cG,\pi}=\sum_{k\in[K]} \pi_k G_k$, introduced in Remark \ref{remark:model}(ii), where $\pi\in\Delta^{K-1}$ are the mixture weights and the component measures $G_1,\dots,G_K\in \cP_D$ are low-dimensional. We introduce a simple geometric condition on the supports of $G_k$ which ensures that the components can be recovered uniquely from the mixture. Denote the space of all such mixtures with exactly $K$ components as 
$$\cM_\circ(K)=\left\{P_{\cG,\pi}=\sum_{k=1}^K\pi_k G_k :\pi\in\Delta^{K-1}, G_k\in \cP_D, \pi_k>0\,\forall k\in[K]\right\}.$$
Any $P\in \cM_\circ(K)$ is parametrized by $(\cG,\pi)$ where $\cG=(G_1,\dots,G_K)$ are the latent low-dimensional component measures and $\pi$ is the mixture weights, with $\pi_k>0$ ensuring that there are no trivial redundant components. The component measures are allowed to have different dimensions. Note that any $P_{\cG,\pi}$ is not absolutely continuous with respect to $\cL_D$ in general. We write $\cup_{k\geq 1} \cM_{\circ}(k)$ to denote the class of all finite mixtures with low-dimensional component measures. We use the following notion of geometric separation for identifiability.

\begin{assumption}\label{assume:A}
We call a collection of sets $\cS_1,\dots,\cS_K\subset \bbR^D$ to satisfy Assumption A if for any $k\neq k'\in[K]$, $\aff(\cS_k)$ and $\aff (\cS_{k'})$ are distinct. We call a collection of low-dimensional probability measures $G_1,\dots,G_K\in \cP_D$ to satisfy Assumption A (by slight abuse of terminology) if the collection of the corresponding supports $\cS_1,\dots,\cS_K$ satisfies Assumption A, where $\cS_k=\supp(G_k)$.
\end{assumption}

\begin{remark}
The above assumption on probability measures $G_1,\dots,G_k$ is purely geometric only on the supports of the components. Suppose $G_k$ is $d_k-$dimensional. Then for $k\neq j$, if $d_k\neq d_j$, then $\cS_k$ and $\cS_j$ could be arbitrary (no restriction aside from the dimension), even allowing $\cS_j\subset \cS_k$ -- thus the above assumption places no restriction on pairs of components of different dimensions. For $d_j=d_k$, the assumption only restricts the case where the affine subspaces are the same. Two such supports are still allowed to intersect -- for example, consider two triangular polytopes in $\bbR^3$ intersecting along a line segment.    
\end{remark}

We first demonstrate that this is a sufficient separation condition to identify the components, including the number of components and their dimensions. Denote $\cM_\circ^A(K):=\{P=\sum_k\pi_kG_k\in \cM_\circ(K): G_1,\dots,G_K \text{ satisfy Assumption } \hyperref[assume:A]{A}\}$ to be the collection of all such noiseless mixtures such that the collection of component measures satisfy the above assumption.

\begin{proposition}\label{prop:noiseless_identifiability}
Let $P_{\cG,\pi}, P_{\cG',\pi'}\in \cup_{K\geq 1} \cM_\circ(K)$ be two mixtures of low-dimensional probability measures.
\begin{enumerate}
    \item Suppose $P_{\cG,\pi}\in \cM_{\circ}^A(K)$ and $P_{\cG',\pi'}\in \cM_\circ^A(K')$, such that $P_{\cG,\pi}=P_{\cG',\pi'}$. Then $K=K'$ and there exists a permutation $\tau$ of $[K]$ such that $\pi=\tau(\pi')$ and $\cG=\tau(\cG')$.
    \item Suppose $P_{\cG,\pi}\in \cM_\circ^A(K)$ and $P_{\cG',\pi'}\in \cM_\circ(K')$ such that $P_{\cG,\pi}=P_{\cG',\pi'}$. Then $K'\geq K$. Furthermore, if $K'=K$, then there is a permutation $\tau$ of $[K]$ such that $\pi=\tau(\pi')$ and $\cG=\tau(\cG')$.
\end{enumerate}
\end{proposition}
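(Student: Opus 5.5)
The plan is to peel components off one affine space at a time, using the fact that each $G_k$ charges its own affine hull $\cA_k=\aff(\cS_k)$ but gives zero mass to lower-dimensional pieces of it. The basic tool is the following observation. Let $G$ be $d$-dimensional with affine hull $\cA$, and let $\cB$ be an affine space with $\cA\not\subseteq\cB$. Then $\cA\cap\cB$ is an affine space of dimension $<d$, so $\cH_d(\cA\cap\cB)=0$. Since $G\ll\cH_d|_{\cA}$, this gives $G(\cB)=0$. On the other hand $G(\cA)=1$.

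For part 1, order the affine hulls appearing on both sides, $\{\cA_k\}\cup\{\cA'_j\}$, by dimension, and pick one of maximal dimension among them, say $\cA$. Evaluate $P=P'$ on the set $\cA$.
\begin{itemize}
\item A component $G_k$ with $\cA_k\neq\cA$ cannot have $\cA_k\subseteq\cA$ with $\dim\cA_k=\dim\cA$, so either $\cA_k\subsetneq\cA$ or $\cA_k\not\subseteq\cA$.
\item In the second case $G_k(\cA)=0$ by the observation.
\item In the first case $G_k(\cA)=1$.
\end{itemize}
To avoid dealing with strictly contained lower-dimensional components, I restrict the measures rather than just evaluating them. Consider $P|_{\cA}$ minus everything living on proper affine subspaces of $\cA$. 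Concretely, take the Lebesgue decomposition of $P|_{\cA}$ with respect to $\cH_{\dim\cA}|_{\cA}$. The absolutely continuous part is exactly $\sum_{k:\cA_k=\cA}\pi_kG_k$, because components with $\cA_k\subsetneq\cA$ are singular to $\cH_{\dim\cA}$. By Assumption A there is at most one such $k$ on each side. Maximality of $\dim\cA$ makes the absolutely continuous part nonzero on at least one side, and hence on both sides, since they are equal. So there is exactly one $k$ with $\cA_k=\cA$ and exactly one $j$ with $\cA'_j=\cA$, and $\pi_kG_k=\pi'_jG'_j$. Taking total masses gives $\pi_k=\pi'_j$ and $G_k=G'_j$.

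Subtract this common component from both sides and repeat on the remaining (unnormalized) mixtures. Each step is again a statement about equal finite measures, and the hypotheses are preserved. Induction on the number of components gives $K=K'$ and the permutation $\tau$.

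For part 2, the Lebesgue-decomposition step must be run with $\cG'$ no longer satisfying Assumption A. Process the hulls $\cA_k$ of $\cG$ in decreasing dimension. For a maximal-dimensional $\cA_k$, the component of $P$ absolutely continuous with respect to $\cH_{\dim\cA_k}|_{\cA_k}$ is $\pi_kG_k$. Here I need that no other $G_l$ with $\dim\cA_l\geq\dim\cA_k$ contributes. Larger dimensions do not exist on the unprimed side; on the primed side a higher-dimensional $G'_j$ would put a nonzero absolutely continuous part on its own hull, which is not matched on the unprimed side. This is the step I expect to be most delicate: I should instead first choose the maximal dimension over both families jointly, as in part 1.

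Then $\pi_kG_k=\sum_{j\in J_k}\pi'_jG'_j$, where $J_k=\{j:\cA'_j=\cA_k\}$ is nonempty. The sets $J_k$ are disjoint for distinct $k$ because the $\cA_k$ are distinct. Hence $K'\geq\sum_k|J_k|\geq K$. If $K'=K$, every $|J_k|=1$, and equality of total masses yields $\pi_k=\pi'_j$ and $G_k=G'_j$, which gives the permutation.
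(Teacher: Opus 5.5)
Your argument is correct, and it takes a different route from the paper's. The paper peels from the bottom. It first shows that the minimal hull dimensions agree on both sides. It then notes that a hull $\cA$ of minimal dimension receives zero mass from every other component: a higher-dimensional hull meets $\cA$ in a set of dimension below its own, and equal-dimensional hulls are excluded by Assumption A. So the normalized restriction of $P$ to $\cA$ is already $G_i$, and the paper inducts after removing the matched pair. Plain restriction works there only because nothing lower-dimensional can sit inside a minimal hull.

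Your device, the $\cH_{\dim\cA}|_{\cA}$-absolutely continuous part of $P(\cdot\cap\cA)$, extracts $\sum_{k:\cA_k=\cA}\pi_kG_k$ for every affine space $\cA$ at once. Nested lower-dimensional components of either family are simply discarded as the singular part. This buys you several things:
\begin{itemize}
\item no induction and no need to match minimal dimensions;
\item an immediate proof that the set of hulls is determined by $P$;
\item a part~2 that reduces to your count over the disjoint nonempty sets $J_k$;
\item clean handling of a non-minimal $\cA_k$ that contains lower-dimensional hulls, a case the paper's part~2 treats only briefly.
\end{itemize}
The price is an appeal to uniqueness of the Lebesgue decomposition, which the paper already uses for Lemma~\ref{lemma:identify_lower_dim_abs_cont}.

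Two points about your own commentary. First, the ordering by dimension is superfluous, and the step you flag as delicate is not. Any $G_l$ or $G'_j$ whose hull is not contained in $\cA_k$, including any higher-dimensional primed component, gives $\cA_k$ zero mass by your opening observation. Hence $\pi_kG_k=\sum_{j\in J_k}\pi'_jG'_j$ holds for each $k$ separately. Second, in part~1 the absolutely continuous part is nonzero because $\cA$ is some component's hull, not because its dimension is maximal.
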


\begin{remark} A few remarks regarding the above result:
\begin{enumerate}
    \item We note the distinction between the two parts of the above proposition. In part 1, Assumption \hyperref[assume:A]{A} is imposed on \textit{both} the mixtures. This allows identifying the number of components, and shows that the components match up to a permutation of the component labels. This part can be taken as the typical identifiability result. In part 2, note that Assumption \hyperref[assume:A]{A} is only imposed on one of the mixtures. This shows that while such a model can be represented in multiple ways (as a mixture of low-dimensional measures), the minimal representation in terms of the number of components is unique and corresponds to only a permutation of labels. 
    \item Since the collection of $G_1,\dots,G_K$ is identifiable from the mixture $P_{\cG,\pi}$, the individual dimensions $d_1,\dots,d_K$ of the component measures are also identifiable.
    \item The result also extends to the case where $K$ is allowed to be infinite (countable mixtures). 
\end{enumerate}
\end{remark}

The following example shows that it is trivial to `split' components and hence represent the model as a mixture with higher number of components -- this example also illustrates the identifiability challenge in non-parametric mixtures without additional assumption.
\begin{example}
    Let $G_k$ be a probability measure supported on $\cS_k$ with $\dim \cS_k=d$. Suppose there exists measureable sets $S_1,S_2\subset \cS_k$ such that
    $$S_1\cup S_2=\cS_k, \quad G_k(S_1)>0, \quad G_k(S_2)>0, \quad G_k(S_1\cap S_2) =0.$$
    Define probability measures $G_{k1},G_{k2}$ by restricting $G_k$ on $S_1,S_2$ and normalizing, i.e., $G_{k,i}(A)=G_k(A\cap S_i)/G_k(S_i)$ for $i=1,2$. Then
    $$\pi_k G_k = \pi_{k1}G_{k1} + \pi_{k2} G_{k2}$$
    where $\pi_{ki}=\pi_k G_k(S_i)$ for $i=1,2$. Thus, a single mixture component can be easily decomposed into two distinct components yielding a different mixture representation.
\end{example}

The proof of Proposition \ref{prop:noiseless_identifiability} is given in Appendix \ref{app:proof_prop:noiseless_identifiability}. The main ingredient of the proof is realizing that if $\dim \cS_1 > \dim \cS_2$, then since $G_k$ is absolutely continuous with respect to the corresponding Hausdorff measure, $pG_1 + (1-p)G_2|_{\aff(\cS_2)}$ coincides with the measure $G_2$, where $G|_{\cS}(A)=G(\cS\cap A)/G(A)$ is the restricted and normalized version of $G$ on $\cS$. Furthermore, two different affine spaces of the same dimension $d$ are either disjoint or their intersection is another affine space of a strictly lower dimension, and hence a null-set for $\cH_d$. This geometric property along with the absolute continuity of each $G_k$ allows us to extract the components from the mixture one by one, starting with the component having the smallest affine dimension. 

Proposition \ref{prop:noiseless_identifiability} essentially shows that for any $P\in \cup_K \cM_\circ(K)$, there is a \textit{unique minimal} representation as $P_{\cG,\pi}\in \cM_\circ^A(K)$ for some minimal $K$. The Assumption \hyperref[assume:A]{A} can be seen as a unique representation requirement, rather than a restrictive condition. This raises the question of whether alternative conditions yield similar unique representations.
The following is an example of a refinement condition.

\begin{assumption}\label{assume:B}
We call a collection of sets $\cS_1,\dots,\cS_K\subset \bbR^D$ to satisfy Assumption B if for any $k\neq k'\in[K]$, either (i) $\aff\cS_k$ and $\aff \cS_{k'}$ are distinct, or (ii) $\cS_k$ and $\cS_{k'}$ are connected sets and $\cS_k\cap \cS_{k'}=\emptyset$. Call a collection of probability measures $G_1,\dots,G_K\in \cP_D$ to satisfy Assumption B, if the supports $\cS_1,\dots,\cS_K$ satisfy Assumption B, where $\cS_k=\supp(G_k)$.
\end{assumption}

This allows different components to share the same affine space as the corresponding affine hull of its components. It separates such components by assuming that each such support is connected, and the different component supports (sharing the same affine space) are disjoint. Note that requiring $\cS_k$ to be connected is already a restriction -- for example, the measure $G$ in Example \ref{example:1}(3) is supported on $C$ which is not connected, yet $G$ is low-dimensional (but not atomic). Assuming \hyperref[assume:B]{B} in place of \hyperref[assume:A]{A} merely changes the interpretation of what we mean by a component - in the latter case, each component is associated with its own affine space, while for the former, components are potentially finer -- either they are characterized by their own affine space, or by the connected components of the measure within a single affine space. Proposition \ref{prop:noiseless_identifiability b} in the Appendix shows that both parts of Proposition \ref{prop:noiseless_identifiability} are true if we replace Assumption \hyperref[assume:A]{A} with Assumption \hyperref[assume:B]{B}. An example is illustrated in Figure \ref{fig: non convex example model} -- consider the probability measure in the left panel. When restricted to $\cup_k\cM_\circ^A(k)$, the unique minimal representation corresponds to $K=3$, while when restricted to $\cup_k \cM_\circ^B(k)$ (replace Assumption \hyperref[assume:A]{A} by \hyperref[assume:B]{B}), then the unique minimal representation corresponds to $K=4$.

\subsection{Identifiability with noise}

We turn to the identifiability problem for the model with noise. Our approach is to use the previous proposition along with the following lemma 
which shows that a mixture of a low-dimensional probability measure with one which is absolutely continuous with respect to $\cL_D$ is identifiable.

\begin{lemma}\label{lemma:identify_lower_dim_abs_cont}
        Suppose $G, G'\in \cup_k\cM_\circ(k)$ be mixtures of low-dimensional probability measures, and $F, F'$ are absolutely continuous probability measures on $\bbR^D$. Then,
        $$p G + (1-p)F = p' G' + (1-p')F' \Rightarrow p=p', G=G',F=F'.$$
    \end{lemma}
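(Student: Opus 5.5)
This is the Lebesgue decomposition argument. The plan is to show that the measure $\nu := pG+(1-p)F$ has $pG$ as its singular part and $(1-p)F$ as its absolutely continuous part with respect to $\cL_D$. The same holds for the primed representation, and uniqueness of the Lebesgue decomposition then forces $pG = p'G'$ and $(1-p)F = (1-p')F'$.

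First I will show that any $G = \sum_{k=1}^m \pi_k G_k \in \cM_\circ(m)$ is singular with respect to $\cL_D$. Each $G_k$ is continuously supported on $\cS_k$ with $\cA_k = \aff\cS_k$ of dimension $d_k < D$. Hence $G_k(\cA_k)=1$, while $\cL_D(\cA_k)=0$ because a proper affine subspace of $\bbR^D$ is Lebesgue-null. Put $N = \cup_{k=1}^m \cA_k$, a finite union of null sets. Then $\cL_D(N)=0$ and $G(N)=1$, so $G\perp\cL_D$. The same argument applies to $G'$ with a null set $N'$.

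Next, set $E = N\cup N'$, so that $\cL_D(E)=0$. Since $F$ and $F'$ are absolutely continuous, $F(E)=F'(E)=0$, while $G(E)=G'(E)=1$. For any Borel set $A$, evaluate both sides of the identity on $A\cap E$:
\[
pG(A\cap E) + (1-p)F(A\cap E) = pG(A) ,
\]
and the right-hand side equals $p'G'(A)$ by the same computation. So $pG = p'G'$ as measures. Taking $A=\bbR^D$ gives $p=p'$, and if $p>0$ we get $G=G'$. Evaluating instead on $A\cap E^c$ gives $(1-p)F(A) = (1-p')F'(A)$, hence $F=F'$ whenever $p<1$.

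The only subtle point is the degenerate case. If $p\in\{0,1\}$, then $G$ (respectively $F$) does not appear in the mixture and cannot be identified. I would state the lemma for $p\in(0,1)$, or read the conclusion as holding whenever the corresponding weight is positive. Beyond this caveat, the main point to check is that the union of affine hulls is $\cL_D$-null and carries all the mass of $G$. This uses only $\dim\cA_k<D$ and $\supp(G_k)\subset\cA_k$; the absolute continuity $G_k\ll\cH_{d_k}$ is not needed for this lemma.
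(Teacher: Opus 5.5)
Your proposal is correct and is essentially the paper's argument: you show $G,G'$ are concentrated on a Lebesgue-null set and then use uniqueness of the Lebesgue decomposition. The paper works with the union of the supports $\cS_k$, while you use the affine hulls and prove the uniqueness directly via the common null set $E=N\cup N'$. Your caveat about $p\in\{0,1\}$ is also well taken. The paper dismisses $p=0$ as trivial, but in that case (and for $p=1$) the conclusion $G=G'$ (resp.\ $F=F'$) actually fails, so the lemma should be read with the corresponding weight positive.
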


The above lemma can be seen as a direct corollary of the uniqueness of Lebesgue decomposition and allows us to split such a probability measure uniquely into a part that is absolutely continuous and one that is continuously supported on a union of low-dimensional sets. Recall the probability model $P_{\cG,\pi,\bphi}$ from Equation \eqref{eq:model measure}. Using the notations from Section \ref{sec:model}, we denote
$$\cM_{\cQ}(K):=\left\{P=\sum_{k=1}^K \pi_k \mu_k: \pi\in\Delta^{K-1}, \pi_k>0, \mu_k\in \cP_D(\cQ)\right\}$$
to be the class of mixture models with exactly $K$ components such that each component is a convolutional distribution around a low-dimensional measure. For $P=\sum_k\pi_k\mu_k\in \cM_\cQ(K)$, we parametrize it as $P_{\cG,\pi,\bphi}$ with $\mu_k=G_k\star Q_{\phi_k}$. As in the previous section, we denote $\cM^A_{\cQ}(K)$ to be the subset of mixtures $P_{\cG,\pi,\bphi}\in \cM_{\cQ}(K)$ where the underlying $G_1,\dots,G_K$ (components in $\cG$) satisfy Assumption \hyperref[assume:A]{A}. Finally, denote the characteristic function of the probability measure $Q$ by $\varphi_Q$ defined as $\varphi_Q(s)=\bbE_{X\sim Q}[\exp(is^\top X)]$ for all $s\in\bbR^D$. We assume the following condition on the class of noise kernels $\cQ$.

\begin{assumption}\label{assume:C}
    A class of zero-mean probability measures $\cQ=\{Q_{\phi}:\phi\in \Phi\}$ such that $Q\ll \cL_D$ for all $Q\in \cQ$ is said to satisfy Assumption C if the following conditions are met.
    \begin{enumerate}
        \item For any $\phi\in\Phi$, $\varphi_{Q_\phi}(t)\neq 0$ for all $t$.
        \item $\Phi$ is totally ordered.
        \item For any $\phi>\phi'$, $\varphi_{Q_\phi}/\varphi_{Q_{\phi'}}$ is a characteristic function of some probability distribution $\tilde{P}\ll \cL_D$.
    \end{enumerate}
\end{assumption}

The above assumption is satisfied by commonly used classes of noise distributions. Standard examples include the Gaussian and Cauchy.
\begin{align*}
    X\sim \cN_D(\boldsymbol{0}, \phi I), \phi>0&\Rightarrow \varphi_{Q_\phi}(t) = \exp\left(-\frac{1}{2}\phi \norm{t}_2^2\right) \\
    X \sim \text{t}_1(\boldsymbol{0}, \phi I), \phi>0 
&\Rightarrow \varphi_{Q\phi}(t) = \exp\left(-\phi \norm{t}_2\right).
\end{align*}
More generally, the symmetric $\alpha-$stable family $(\alpha\in (0,2]$) satisfies $\varphi_{Q_\phi}(t) = \exp\big(-\phi \norm{t}_2^\alpha\big),$
which interpolates between the Gaussian and Cauchy noise providing a flexible class of heavy-tailed convolution kernels. Beyond stable laws, Assumption \hyperref[assume:C]{C} also holds for many infinitely divisible distributions. For instance, a Gaussian scale mixture with Gamma mixing satisfies $\varphi_{Q_\phi}(t) = (1 + \tfrac{1}{\lambda}\norm{t}_2^2)^{-\phi}$
so that ratio of characteristic functions remains within the same functional family. More generally, any family arising from a Levy process with zero-mean and absolutely continuous marginals satisfies the assumption, with $\phi$ playing the role of time. Most of the noise kernels considered in this work are isotropic in nature, where $\phi$ captures the noise level. We do not consider this a major limitation in the flexibility of the model class since we view the nonparametric $G_k$ as the major structure that explains most of the variability in the data from the $k-$th subpopulation.

Given the above setting and assumptions, we have the following identifiability result for this class of mixture models. 

\begin{theorem}\label{thm:isotropic_identifiability}
Let $P_{\cG,\pi,\bphi}, P_{\cG',\pi',\bphi'}\in \cup_{K\geq 1} \cM_\cQ(K)$ be two finite mixture models in $\bbR^D$ consisting of components that are convolutions around low-dimensional probability measures. Suppose the noise kernel family $\cQ$ satisfies Assumption \hyperref[assume:C]{C}. Then we have the following.
\begin{enumerate}
    \item Suppose $P_{\cG,\pi,\bphi}\in \cM_{\cQ}^A(K)$ and $P_{\cG',\pi',\bphi'}\in \cM_{\cQ}^A(K')$ such that $P_{\cG,\pi,\bphi}=P_{\cG',\pi',\bphi'}$. Then $K=K'$ and there exists a permutation $\tau$ of $[K]$ such that $\pi=\tau(\pi'), \cG=\tau(\cG')$ and $\bphi=\tau(\bphi')$.
    \item Suppose $P_{\cG,\pi,\bphi}\in \cM_{\cQ}^A(K)$ and $P_{\cG',\pi',\bphi'}\in \cM_{\cQ}(K')$ such that $P_{\cG,\pi,\bphi}=P_{\cG',\pi',\bphi'}$. Then $K'\geq K$. If $K'=K$, then there exists a permutation $\tau$ of $[K]$ such that $\pi=\tau(\pi'), \cG=\tau(\cG')$ and $\bphi=\tau(\bphi')$.
\end{enumerate}
\end{theorem}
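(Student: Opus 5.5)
The plan is to strip off the noise one level at a time, starting from the smallest noise parameter, and to use Lemma \ref{lemma:identify_lower_dim_abs_cont} together with Proposition \ref{prop:noiseless_identifiability} to match components at each level. Assume $P_{\cG,\pi,\bphi}=P_{\cG',\pi',\bphi'}$. Since $\Phi$ is totally ordered and there are finitely many parameters, let $\phi_*=\min\{\phi_k\}\cup\{\phi'_k\}$.

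\emph{Deconvolution step.} Take characteristic functions and divide by $\varphi_{Q_{\phi_*}}$, which never vanishes by Assumption \hyperref[assume:C]{C}(1). Then
\begin{equation*}
\sum_k \pi_k \varphi_{G_k}\,\frac{\varphi_{Q_{\phi_k}}}{\varphi_{Q_{\phi_*}}}=\sum_k \pi'_k \varphi_{G'_k}\,\frac{\varphi_{Q_{\phi'_k}}}{\varphi_{Q_{\phi_*}}}.
\end{equation*}
For each $k$ with $\phi_k=\phi_*$ the ratio equals $1$. For each $k$ with $\phi_k>\phi_*$, Assumption \hyperref[assume:C]{C}(3) says the ratio is the characteristic function of some $\tilde P_k\ll\cL_D$, so $G_k\star\tilde P_k\ll\cL_D$. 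By uniqueness of characteristic functions, the two sides are equal as measures:
\begin{equation*}
\sum_{k:\phi_k=\phi_*}\pi_kG_k+F=\sum_{k:\phi'_k=\phi_*}\pi'_kG'_k+F',
\end{equation*}
where $F$ and $F'$ are absolutely continuous. Both sides must contain some singular part, because $\phi_*$ is attained on at least one side, and if the corresponding index set were empty on the other side that side would be absolutely continuous. Normalizing and applying Lemma \ref{lemma:identify_lower_dim_abs_cont} (or directly the uniqueness of the Lebesgue decomposition) gives equal singular parts, $\sum_{\phi_k=\phi_*}\pi_kG_k=\sum_{\phi'_k=\phi_*}\pi'_kG'_k$, and equal absolutely continuous parts. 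The singular parts are noiseless mixtures, and they inherit Assumption \hyperref[assume:A]{A} from the full collection whenever the full collection satisfies it.

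\emph{Part 1.} Here both sides satisfy Assumption \hyperref[assume:A]{A}, so Proposition \ref{prop:noiseless_identifiability}(1) matches the $\phi_*$-components of the two sides bijectively, with equal weights and equal latent measures. Convolving them back with $Q_{\phi_*}$ shows that these components contribute identically to $P$, so we remove them from both sides. What remains is an equality of two sub-probability measures with the same total mass and strictly fewer components. We then induct on the total number of components, renormalizing or just working with finite measures of equal mass.

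\emph{Part 2.} Only $\cG$ satisfies Assumption \hyperref[assume:A]{A}, so at each level Proposition \ref{prop:noiseless_identifiability}(2) applies. I would extend it to finite measures of equal mass, which is harmless, and note that it stays valid when the primed side has zero components at that level only if the unprimed side does too. It gives $|\{k':\phi'_{k'}=\phi_*\}|\geq|\{k:\phi_k=\phi_*\}|$. Peeling off level by level is subtler now, because the primed $\phi_*$-components need not match the unprimed ones individually. Their mixtures are equal, however, so convolving with $Q_{\phi_*}$ still lets us remove both groups, and the inequality holds at every level. Summing over levels gives $K'\geq K$. If $K'=K$, every level inequality is an equality, and the equality case of Proposition \ref{prop:noiseless_identifiability}(2) gives the permutation at each level. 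Combining these levels yields $\tau$, with $\bphi=\tau(\bphi')$ holding automatically because matched components were matched at the same level.

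\emph{Main obstacle.} The delicate point is the deconvolution step: it must produce genuinely singular-plus-absolutely-continuous decompositions on both sides. Low-dimensionality ($\dim\aff\cS_k<D$) makes $G_k\perp\cL_D$, and Assumption \hyperref[assume:C]{C}(3) makes every higher-noise component absolutely continuous. The remaining bookkeeping is to handle levels where one side has no components, and to apply Proposition \ref{prop:noiseless_identifiability} to unnormalized measures.
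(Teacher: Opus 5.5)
Your proposal is correct and follows essentially the same route as the paper: divide the characteristic-function identity by $\varphi_{Q_{\phi_*}}$ for the smallest noise level, use uniqueness of the Lebesgue decomposition (Lemma \ref{lemma:identify_lower_dim_abs_cont}) to isolate and equate the singular parts, apply Proposition \ref{prop:noiseless_identifiability} (part 1 or part 2) at that level, then peel off and iterate. In Part 2 you remove the entire level-$\phi_*$ group, whose mixtures agree as measures, rather than individually matched components; the paper glosses over this point, and it is what keeps the induction valid when $|I|<|I'|$.
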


The proof of the above theorem  is provided in Appendix \ref{app:proof_thm:isotropic_identifiability}. The key idea is a recursive peeling strategy by iterating between deconvolving, and using the previously established results, taking advantage the condition on $\cQ$. Note that under the convolutional structure of the components, the characteristic function of the mixture $P_{\cG,\pi,\bphi}$ has the form $\varphi_P(s) = \sum_{k=1}^K \pi_k \varphi_{G_k}(s)\varphi_{Q_{\phi_k}}(s).$
Using the condition of $\cQ$, we can divide both sides of the equality of characteristic functions by $\varphi_{Q_{\phi_*}}$ corresponding to the minimal noise level $\phi_*$ in the collection (this corresponds to a deconvolution step) leading to a mixture where some components (precisely those with $\phi_k=\phi_*$) are low-dimensional, and others still being absolutely continuous with respect to $\cL_D$. Here we can use Lemma \ref{lemma:identify_lower_dim_abs_cont} to isolate the singular parts, and further use Proposition \ref{prop:noiseless_identifiability} to identify the corresponding components. We keep peeling such components until we have matched them all. This proof technique effectively uses the ordering via $\phi$ which introduces a filtration of smoothing strength, and identifiability is achieved via repeated extraction of the least smooth layer.

\begin{remark}
    We make the following remarks regarding Theorem \ref{thm:isotropic_identifiability}. 
    \begin{enumerate}
        \item For Laplace distribution $q_{\phi}=\text{Laplace}(\boldsymbol{0}, \phi I)$ with $\phi>0$, the c.f. is $\varphi_{\phi}(t)=1/(1+\phi\norm{t}_2^2/2)$. Although this does not directly satisfy the condition required in the theorem, see the remark after the proof in the Appendix which demonstrates how the exact same strategy can also be used for this noise distribution to obtain identifiability.
        \item Unlike Proposition \ref{prop:noiseless_identifiability}, Theorem \ref{thm:isotropic_identifiability} requires finite $K<\infty$. The important problem in the case $K=\infty$ arises from the fact that although $\inf \phi_k$ always exist, the minimum might not. Thus, in the presence of such accumulation points, the current analysis would not work.
    \end{enumerate}
\end{remark}

As in the previous section, Theorem \ref{thm:isotropic_identifiability} also generalizes to the case when Assumption \hyperref[assume:A]{A} is replaced by Assumption \hyperref[assume:B]{B}.

\subsection{Parametric special case: polytope supports}
\label{sec: polytope case}

We state some of the results from the previous section in terms of the special case considered in Example \ref{example:polytope}. In this case, we parametrize each of the component latent measure as $G=(f_\Theta)_{\#}\mu$ where $f_{\Theta}:\Delta^{d-1}\to \bbR^D$ via $f_{\Theta}(\beta)=\sum_{j=1}^d \beta_j\theta_{j}$ (here $\theta_1',\dots,\theta_d'$ are the rows of $\Theta\in\bbR^{d\times D}$ with $d<D$), and $\mu$ is a probability measure on $\Delta^{d-1}$, with $\mu\ll \cH_d$ restricted to $\Delta^{d-1}$. As discussed in \ref{sec:related models}, this model has connections to topic models, archetypal analysis, convex NMF, and others, where $\theta_1,\dots,\theta_d$ are interpreted as the topics or end-members depending on context. Note that any such $G$ is low-dimensional in the sense of definition \ref{def:low-dim}, and compactly supported with $\cS=\supp(G)$ being a convex polytope with dimension at most $d-1$, with vertices from $\{\theta_1,\dots,\theta_d\}$.

For the overall mixture, assume that each $G_k=(f_{\Theta_k})_{\#}(\mu_k)$ for $k=1,\dots,K$, with $\Theta_k$ having columns $\theta_{k,1},\dots,\theta_{k,d_k}\in\bbR^D$. Moreover, assume that each $\mu_k$ is from a parametrized family of distributions on $\Delta^{d_k-1}$, for example, $\mu_k=\text{Dir}(d_k, \gamma_k)$. Moreover, for concreteness, assume that the noise kernel is Gaussian $Q_{\sigma}\equiv \cN_D(\boldsymbol{0}, \sigma^2 I)$. Then the overall model has the following hierarchical representation
\begin{equation}
    \begin{aligned}
        z|\pi &\sim \text{Cat}(\pi) \\
        \beta | z=k &\sim \mu_k \\
        X | z=k, \beta &\sim \cN_D\left(\Theta_k'\beta, \sigma_k^2 I\right).
    \end{aligned}
\end{equation}

The density of $X$ with respect to $\cL_D$ has the following expressions
\begin{equation}\label{eq: polytope density}
    p_{\bTheta, \pi, \Sigma, \bmu}(x) = \sum_{k=1}^K \pi_k \int_{\cS} \phi_D(x \mid \eta, \sigma_k^2 I) dG_k(\eta) = \sum_{k=1}^K \pi_k \int_{\Delta^{d_k-1}} \phi_D(x \mid \Theta_k'\beta, \sigma_k^2 I) d\mu_k(\beta),
\end{equation}
where $G_k$ is as defined above with $\cS_k=\supp(G_k)$, and the parametrization is in terms of the end-points $\bTheta=(\Theta_1,\dots,\Theta_K)$, the mixture weights $\pi\in\Delta^{K-1}$, the noise levels $\Sigma=(\sigma_1^2,\dots,\sigma_K^2)$ and the underlying \textit{admixing} distributions $\bmu=(\mu_1,\dots,\mu_K)$. 

Using the results from the previous section, under appropriate conditions on $G_1,\dots,G_K$ (which effectively are conditions on the underlying polytopes $\cS_1,\dots,\cS_K$), we can identify each of the $G_k$. However, in this parametrization of $G_k$, the $\Theta_k$ and $\mu_k$ are not unique. This stems from redundancy in the number of end-points. This non-identifiability is analogous to the non-minimal representations discussed earlier. The following example illustrates this.

\begin{example}
    Suppose $P$ is the distribution of $\sum_{j=1}^d \beta_j\theta_j$ with $\beta\sim \mu$, an absolutely continuous distribution on $\Delta^{d-1}$. Suppose $\theta_1,\dots,\theta_{m}$ are the vertices of the convex hull $\cS=\conv(\theta_1,\dots,\theta_d)$, and each $\theta_j$ with $m<j\leq d$ are convex combinations of $\theta_1,\dots,\theta_m$. Let $\Theta$ be the matrix consisting of the original points $\theta_1,\dots,\theta_d$ and $\Theta^*$ be the corresponding matrix with $\theta_1,\dots,\theta_m$. Let $\theta_j=\sum_{\ell=1}^m a_{j\ell}\theta_\ell$ be the convex representation for $j=m+1,\dots,d$. Then, we can write
    $$\sum_{j=1}^d \beta_j \theta_j = \sum_{\ell=1}^m \tilde{\beta}_\ell\theta_\ell, \quad \tilde{\beta}_\ell=\beta_\ell + \sum_{j=m+1}^d a_{j\ell}\beta_j, \quad \ell\in [m].$$
    Let $T$ be the linear map which takes $\beta\mapsto \tilde{\beta}$, as given above. Writing $\mu^*=T_{\#}\mu$, it is easy to check that $\mu^*$ is absolutely continuous on $\Delta^{m-1}$. Hence, such a $P$ has equivalent expressions $P=(f_{\Theta})_{\#}\mu = (f_{\Theta^*})_{\#}(\mu^*)$. This example illustrates that for such probability measures, the minimal representation is achieved when $\theta_1,\dots,\theta_d$ are vertices of the supporting polytope. In the special case that $\theta_1,\dots,\theta_d$ are affinely independent, $G$ is exactly $(d-1)-$dimensional with support $\cS$, a simplex.
\end{example}

\begin{figure}
    \centering
    \includegraphics[clip, trim=0.5cm 1.3cm 0.5cm 3.1cm, width=0.7\linewidth]{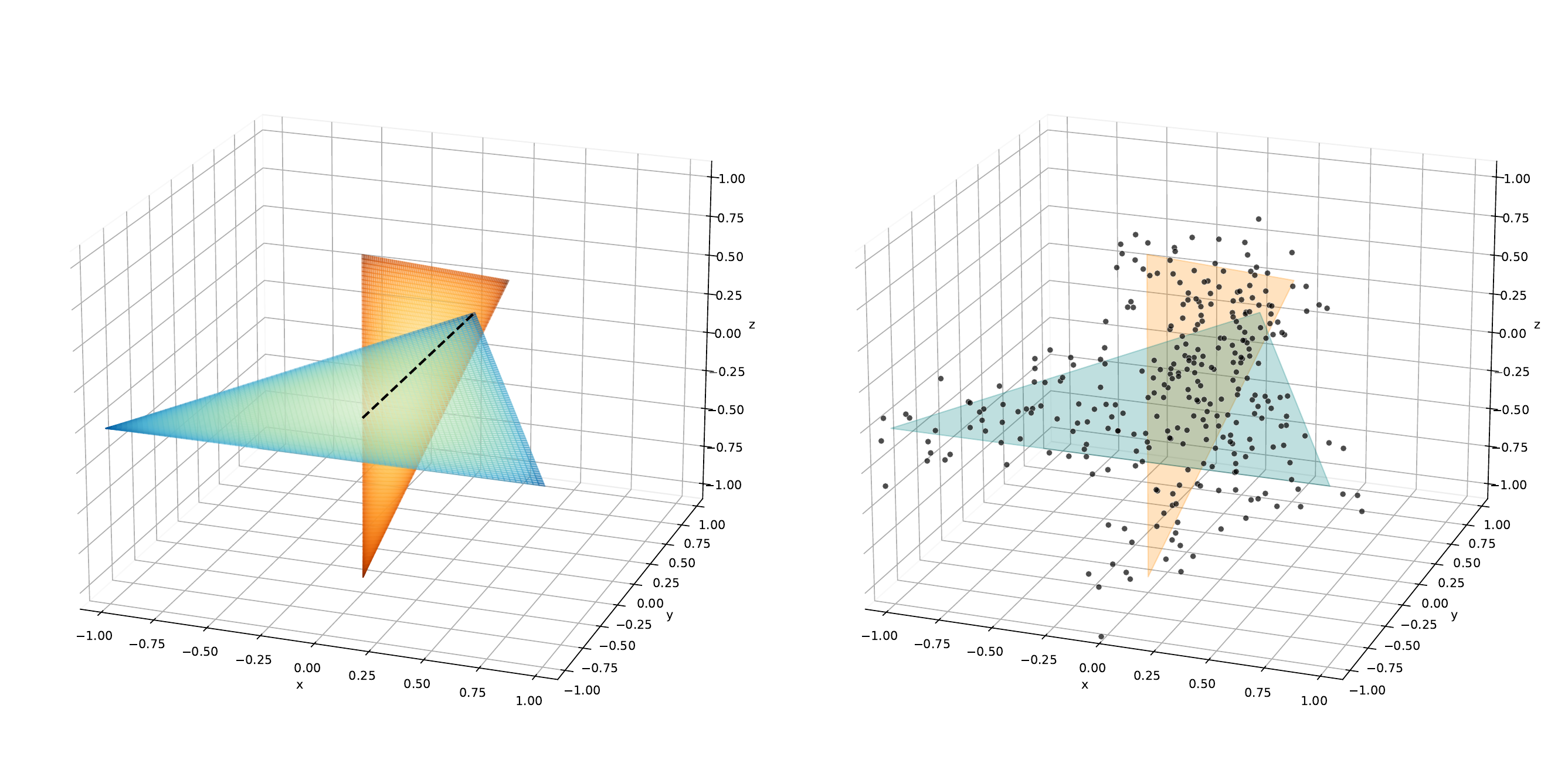}
    \caption{Example of model in $D=3$ (for visualization) with $K=2$ components, each latent measure $G_k$ is supported on a 2-dimensional polytope (triangle here) with the dashed line showing the intersection of the supports. Left panel shows the noiseless case, while the right panel shows a scatter plot of observations from from the model with Gaussian noise. The underlying $\mu_k$ is Dirichlet and its effect on $G_k$ can be seen by the color shading (left panel).}
    \label{fig: polytope example model}
\end{figure}

To remove such redundancy, we define $\Theta$ to be \textit{exposed} if the columns $\theta_1,\dots,\theta_d$ are all vertices of the polytope $\cS=\conv(\theta_1,\dots,\theta_d)$. Recall that for a convex polytope $\cS$, the collection of extreme points or vertices is defined as the set of points which cannot be expressed as the strict convex combination of two distinct points from the polytope, i.e., 
\begin{equation}\label{eq: vertices}
    \extr(\cS):=\{x\in\cS\mid \nexists \, u\neq v\in \cS \ni x = \lambda u + (1-\lambda)v \text{ for some } \lambda\in (0,1)\}.
\end{equation}
We will require that each $\Theta_k$ is exposed to remove the use of redundant end-members within a component. This condition requires that within each component, each of the parameters $\theta_{k,1},\dots,\theta_{k,d}$ in $\Theta_k$ are the vertices of the corresponding polytope $\cS_k$. This is needed since identification in this parametrization requires the additional geometric step of recovering $\Theta_k$ from $G_k$. See Definition \ref{def:exposed} for a slightly more general global definition of exposure for a collection of polytopes, as will be required for establishing the convergence rates of the parameters in Section \ref{sec:posterior_contraction}.

For the model $p=p_{\bTheta, \pi,\Sigma,\bmu}$ as above, we define the \textit{size} of the model as $s(p)=(K;(d_1,\dots,d_K))$ with $d_1\leq \dots\leq d_K$ being the ordered number of end-points in the components, i.e., after a permutation of the labels $\Theta_k\in\bbR^{d_k\times D}$. Then we can define a total order on the space of sizes as
$$(K;(d_1,\dots,d_K))>(K', (d_1',\dots,d_{K'}')) \iff K>K' \text{ or } \left(K=K'\text{ and } (d_1,\dots,d_K) > (d_1',\dots,d_K')\right)$$
where $(d_1,\dots,d_k')>(d_1,\dots,d_k')$ is the lexicographic ordering. With these notions, the discussion above, and the results from the previous section, we have the following result.

\begin{corollary}\label{cor: identifiability polytope}
    Suppose $P_{\bTheta,\pi,\Sigma, \bmu}, P_{\bTheta', \pi', \Sigma', \bmu'}$ be finite mixtures as above with $P_{\bTheta,\pi,\Sigma, \bmu}= P_{\bTheta', \pi', \Sigma', \bmu'}$.
    \begin{enumerate}
        \item Suppose \textit{both the models} satisfy (i) assumptions in Theorem \ref{thm:isotropic_identifiability}, and (ii) each $\Theta_k$ is exposed. Then up to a permutation, all the component parameters match.
        \item Suppose \textit{only } $P_{\bTheta,\pi,\Sigma,\bmu}$ satisfies the conditions in the previous part, then $s(p_{\bTheta,\pi,\Sigma,\bmu})\leq s(p_{\bTheta',\pi',\Sigma',\bmu'})$. Further, if $s(p_{\bTheta,\pi,\Sigma,\bmu})= s(p_{\bTheta',\pi',\Sigma',\bmu'})$, then up to a permutation, all the component parameters match.
    \end{enumerate}
\end{corollary}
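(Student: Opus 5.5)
The plan is to reduce the corollary to Theorem \ref{thm:isotropic_identifiability} at the level of the latent measures, and then add one geometric step that recovers $(\Theta_k,\mu_k)$ from $G_k$. Write $G_k=(f_{\Theta_k})_{\#}\mu_k$ and $G_k'=(f_{\Theta_k'})_{\#}\mu_k'$. The Gaussian family $\{\cN_D(\boldsymbol{0},\sigma^2 I)\}$ satisfies Assumption \hyperref[assume:C]{C}, and each $G_k$ is low-dimensional, continuously supported on the polytope $\cS_k=\conv(\theta_{k,1},\dots,\theta_{k,d_k})$. So $P_{\bTheta,\pi,\Sigma,\bmu}=P_{\cG,\pi,\bphi}$ with $\bphi=\Sigma$, and likewise for the primed model.

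For part 1, both mixtures lie in $\cM^A_{\cQ}$. Theorem \ref{thm:isotropic_identifiability}(1) then gives $K=K'$ and a permutation $\tau$ with $\pi=\tau(\pi')$, $\Sigma=\tau(\Sigma')$ and $G_k=G'_{\tau(k)}$. It remains to show that, for a single component, $G=(f_\Theta)_{\#}\mu=(f_{\Theta'})_{\#}\mu'$ with both $\Theta,\Theta'$ exposed forces $\Theta=\Theta'$ up to column permutation and $\mu=\mu'$ up to the matching coordinate permutation.

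\emph{Step (a): the vertices.} Since $\mu\ll\cH$ on $\Delta^{d-1}$, its support is all of $\Delta^{d-1}$. Hence $\supp G=f_\Theta(\Delta^{d-1})=\conv(\Theta)$, and similarly $\supp G=\conv(\Theta')$. Exposure says the columns are exactly $\extr(\cS)$, so the column sets coincide and $d=d'$.

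\emph{Step (b): the admixing measure.} This is the main obstacle. When the vertices are affinely independent, $f_\Theta$ is injective on the simplex, so $\mu=(f_\Theta^{-1})_{\#}G$ is determined after relabelling. If $d-1>\dim\cS$, the map $f_\Theta$ is not injective and $\mu$ is genuinely not recoverable from $G$; a uniform-type $\mu$ and a reweighted one along fibers give the same $G$. In that case I would read ``component parameters match'' as matching of $(\Theta_k,\pi_k,\sigma_k)$ and of $G_k$. Alternatively I would restrict to affinely independent vertices, where simplex injectivity closes the argument, and state this explicitly. Within a parametric family such as Dirichlet, I would argue that the density of $G$ on $\aff\cS$ determines $\gamma$ under injectivity.

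For part 2, Assumption \hyperref[assume:A]{A} is imposed only on the first model. Theorem \ref{thm:isotropic_identifiability}(2) gives $K\le K'$. If $K<K'$, the size inequality holds immediately. If $K=K'$, the theorem again yields $G_k=G'_{\tau(k)}$ and matching $\pi$ and $\Sigma$. For each $k$, step (a) still applies to the unprimed model: $\supp G'_{\tau(k)}=\conv(\Theta'_{\tau(k)})=\cS_k$, so every vertex of $\cS_k$ appears among the columns of $\Theta'_{\tau(k)}$. This gives $d'_{\tau(k)}\ge d_k$, with equality iff $\Theta'_{\tau(k)}$ is exposed. Sorting both lists, the componentwise inequality $d'_{(j)}\ge d_{(j)}$ follows, since the matching is a bijection dominating componentwise. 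This implies lexicographic $\ge$, hence $s(p)\le s(p')$. If the sizes are equal, the sorted lists coincide, which together with componentwise domination forces $d'_{\tau(k)}=d_k$ for every $k$. So each $\Theta'_{\tau(k)}$ is exposed, and the part-1 argument (steps (a), (b)) applies componentwise to finish.
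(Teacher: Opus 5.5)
Your route is the one the paper intends; it gives no separate proof for this corollary. You apply Theorem~\ref{thm:isotropic_identifiability} to match $(\pi_k,\sigma_k^2,G_k)$, read $\Theta_k$ off as the vertex set of $\supp G_k$ using exposure, and in part 2 compare end-member counts. Your order-statistics argument for $s(p)\le s(p')$, and for forcing $d'_{\tau(k)}=d_k$ when the sizes are equal, is correct.

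\textbf{The gap.} In step (a) you infer $\supp\mu=\Delta^{d-1}$ from absolute continuity of $\mu$. Absolute continuity says nothing about support: take $\mu$ uniform on a small ball inside the simplex. Full support is genuinely needed, as a counterexample shows.
\begin{itemize}
\item Let $T\subset T'$ be triangles.
\item Let $\Theta$ be the vertices of $T$, with $\mu$ uniform on $\Delta^2$.
\item Let $\Theta'$ be the vertices of $T'$, with $\mu'$ the normalized uniform measure on $f_{\Theta'}^{-1}(T)$.
\end{itemize}
Both $\Theta$ and $\Theta'$ are exposed, and both admixing measures are absolutely continuous. Yet $(f_\Theta)_{\#}\mu=(f_{\Theta'})_{\#}\mu'$ is uniform on $T$, so with $K=1$ and equal noise the conclusion of part 1 fails.

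You therefore have to add $\supp\mu_k=\Delta^{d_k-1}$ as a hypothesis. It is needed for both models, because part 2 also uses $\supp G'_{\tau(k)}=\conv(\Theta'_{\tau(k)})$ to conclude that every vertex of $\cS_k$ is a column of $\Theta'_{\tau(k)}$. The paper takes this for granted in Section~\ref{sec: polytope case}, where it asserts that $\supp G$ is the polytope with vertices among the $\theta_j$. It holds automatically for Dirichlet $\mu_k$.

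\textbf{Your step (b) caveat is correct.} The paper's statement glosses over this point. When the exposed columns are affinely dependent, $f_\Theta$ is not injective, and $\mu_k$ is not determined by $G_k$ in a nonparametric class. So ``all component parameters match'' can honestly be asserted only for $(\Theta_k,\pi_k,\sigma_k^2,G_k)$. The alternatives are to assume affinely independent vertices, or to assume the admixing family stays identifiable under the non-injective pushforward; for Dirichlet this would need its own argument. Your explicit hedge there is the right move.
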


Figure \ref{fig: polytope example model} shows a concrete example of this particular parametric subclass of the general semiparametric mixtures considered in previous sections.

\section{Convergence Rates of Parameter Estimates}\label{sec:posterior_contraction}

In this section, we restrict ourselves to a specific parametric class for the component latent measures in order to establish concrete rates of parameter estimation. In particular, we consider the class of models where each $G_k$ is supported on a convex polytope $\cS_k$, as discussed in Section \ref{sec: polytope case}. We are primarily interested in estimating all the component-specific end-members, mixture weights and noise variances and deriving contraction rates for the posterior distribution on these parameters under an appropriate Bayesian setting.

Recall the model $P_{\bTheta,\pi,\Sigma,\bmu}$ from Equation \eqref{eq: polytope density}. We simplify the setting by assuming $d_1=\dots=d_K=d$, i.e., each component has the same number of end-members. Moreover, we assume that the underlying measure $\mu$ in $G=(f_{\Theta})_{\#}\mu$ for any component are all identical, denoted as $p_\beta$, a fixed probability distribution on $\Delta^{d-1}$. We are interested in the rates for $\bTheta=(\Theta_1,\dots,\Theta_K)$, $\pi$ and $\Sigma=(\sigma_1^2,\dots,\sigma_K^2)$ in the well-specified setting where $K, d$ and $p_\beta$ are known. We assume that $p_\beta \ll \cH_d$ and $\supp(p_{\beta}) = \Delta^{d-1}$, to ensure that component latent measures $G_k$'s retain the low-dimensional structure as in the preceding sections and the support of $G_k$ is indeed a convex polytope of dimension at most $d-1$. Despite these simplifications, deriving parameter estimation rates in this class of models is highly non-trivial owing to the nested mixture structure, with a finite mixture structure on top of a (continuous) mixture for each component. 

While posterior contraction rates for density estimation can be established without much difficulty using the general theory in \cite{ghosal2017fundamentals}, rates for the parameter are significantly harder to establish. This requires understanding the map $(\bTheta,\pi,\Sigma) \mapsto P_{\bTheta,\pi,\Sigma}$. One way to achieve this is through the use of \textit{inverse bounds}, as in \cite{nguyen2013convergence}, \cite{nguyen2015posterior}, where a lower bound on the total variation distance is established by an appropriate metric on the parameter space. This allows one to transfer density estimation rates to parameter estimation under suitable conditions on the model. 

Inverse bounds for mixture of normal model has been established \cite{ho2016strong} for the mixing measure. Such an analysis extends to mixture kernels under a condition known as \textit{first-order identifiable} \citep{ho2016strong, guha2021posterior}, which is a considerably stronger condition than identifiability. 
\cite{nguyen2013convergence} established inverse bounds for \textit{any} mixing distribution convoluted with an appropriate noise kernel, which results in the following general inequality
\begin{equation}
    W_2^2(G,G')\lesssim \left(-\log d_{\TV}(p_{G}, p_{G'})\right)^{-1}, \quad \text{as } W_2(G,G')\to 0
\end{equation}
where $G,G'$ are mixing measures with bounded support and $p_G=\int \phi(\cdot|\eta)dG(\eta)$ is the infinite mixture model. In the case where $\sigma_1^2=\dots=\sigma_K^2$, as noted in Remark \ref{remark:model}(ii), the overall model can be seen as a single convolution $P_{\cG,\pi,\phi}=P_{\cG,\pi}\star Q_\phi$ and the result in the above display can be used to obtain the estimation result for the latent mixing measure $P_{\cG,\pi}$. Firstly, the use of such a general inverse bound gives a much slower rate of convergence. Secondly, convergence is obtained for the measure $P_{\cG,\pi}$, and cannot easily be extended to the underlying components $G_1,\dots,G_K$ separately. Finally, our model is not convolutional in general, and the component-specific variances add a lot of flexibility in modeling. 

It is clear that this particular setting presents its own set of challenges, where existing methods cannot be used directly. We follow the standard strategy through inverse bounds and the novelty of our approach lies in introducing a suitable geometric assumption on the underlying polytopes (see the notion of \textit{exposure} in Definition \ref{def:exposed}). This can be viewed as a type of separation between the components and seemingly a bit stronger than distinct affine spaces, as was sufficient for identifiability (a precise discussion is given later). However, this allows us to show that the mixture kernel in our model $\int \phi(x-\eta, \sigma^2 I) dG(\eta)$ is itself a first-order identifiable kernel. Note that this kernel is a continuous location mixture on its own. This allows us to derive a parametric estimation rate.

\subsection{Parametrization and Metric}
Let $K, d$ and $p_\beta$ be fixed. The model is parametrized by
$(\bTheta,\pi, \Sigma)$, where $\pi\in\Delta^{K-1}$ denotes the mixture probabilities and $\Sigma=(\sigma_1^2,\dots,\sigma_K^2)$ are the component variances. $\bTheta = \{\Theta_1,\dots,\Theta_K\}$ denotes the collection of end-members with $\theta_{k,j} \in \bbR^D$ for all $k\in [K], j\in [d]$, where the columns of $\Theta_k$ represent the end-members of component $k$ (thus, $\Theta_k\in\bbR^{D\times d}$). The model can be expressed as
\begin{equation}
\begin{aligned}\label{model:parametrization}
    z | \pi \sim \text{Cat}(\pi), \quad
    \beta \sim p_{\beta}, \quad 
    X|z=k, \beta, \bTheta, \Sigma &\sim \cN\left(\cdot \mid\Theta_k \beta, \sigma_k^2 I_D\right).
    \end{aligned}
\end{equation}
The model stipulates that for each data point, one of the $K$ components is chosen with probability given by $\pi$ and given that the component is $k$, a random vector is drawn $\eta\sim G_k=(f_{\Theta_k})_{\#} p_{\beta}$ supported on a convex polytope $\cS_k$ and a noisy version of this vector is observed $X=\eta+\epsilon$, where the noise $\epsilon$ is Gaussian with variance $\sigma_k^2$, depending on the chosen component.

For $p_\beta$, we assume that $p_\beta \ll \cH_{d-1}$ restricted to $\Delta^{d-1}$. Additionally, suppose $p_{\beta}$ is exchangeable, i.e., if $\beta\sim p_{\beta}$ then $(\beta_1,\dots,\beta_d)$ and $(\beta_{\tau(1)},\dots,\beta_{\tau(d)})$ have the same distribution for any permutation $\tau$ of $[d]$, with finite first moment. Most of the techniques used in the sequel will pass without this definition. This definition merely implies that for any component, the order of the vertices in the parametrization does not matter. If this assumption is not true, then one can simply ignore this permutation-invariance and proceed by redefining the metric appropriately. 

Denote the parameter space by $\Psi$ which contains all $\psi = (\bTheta, \pi,\Sigma)$ as discussed above with $\pi_k>0$ for all $k\in[K]$ to ensure that the model has exactly $K$ non-trivial components. The dependence on $K, d$ and $p_\beta$ is suppressed in the notation, and these are assumed fixed. For $\cT\subset \bbR^D$ and $S\subset \bbR$, let us define $\Psi_{(\cT,S)}\subset\Psi$ such that for any $(\Theta,\pi,\Sigma)\in \Psi_{(\cT,S)}$, $\theta_{kj}\in\cT$ and $\sigma_k^2 \in S$ for all $k\in[K], j\in[d]$.
Given $\psi\in \Psi$, the density takes the form
\begin{align}\label{eq:density_1}
    p_{\psi}(x) = \sum_{k=1}^K \pi_k \int_{\cS_k} \phi(x\mid \eta, \sigma_k^2 I) dG_k(\eta)=\sum_k \pi_k \int_{\Delta^{d-1}} \phi(x\mid \Theta_k^\top\beta, \sigma_k^2 I) dP_{\beta}(\beta).
\end{align}


 We consider the following pseudometric on the parameter $\psi\in \Psi$:
\begin{equation}
    \label{eq:bayes_metric}
    d(\psi, \psi') = \min_{\tau\in \bbS_K} \sum_{k=1}^K \left(d_M(\Theta_k, \Theta'_{\tau(k)}) + |\pi_k - \pi_{\tau(k)}'| + |\sigma_k^2 - \sigma_{\tau(k)}'^2|\right).
\end{equation}
In the above, $d_M(\Theta, \Theta') := \min_{\tau \in \bbS_d} \sum_{j=1}^d \norm{\theta_j - \theta'_{\tau(j)}}$ is a metric invariant to the permutation of the columns $\theta_1,\dots,\theta_d$ of $\Theta$. This invariance is required due to the exchangeability assumption on $p_\beta$, allowing us to estimate the end-members up to a permutation. 
It can be easily checked that $d$ is a pseudometric on $\Psi$, and hence also on $\Psi_{(\cT,S)}$. 

We first establish an upper bound for the KL-divergence between two different models in terms of the distance between the corresponding parameters.

\begin{proposition}\label{prop:kl_upper bound}
    For any $\psi, \psi'\in \Psi_{(\cT,S)}$ where $\cT$ and $S$ are compact with $\inf S = \underline{\sigma}^2>0$, we have
    \begin{align}
        \KL\left(P_{\psi}\Vert P_{\psi'}\right) \leq \frac{1}{2\underline{\sigma}^2}\left(d(\psi,\psi')^2 + C d(\psi,\psi'))\right),
    \end{align}
    where $C=C(\cT,S)=D + (\diam(\cT)^2 + D\diam(S))/2$ is a constant, free of $K$ and $\underline{\sigma}^2$.
\end{proposition}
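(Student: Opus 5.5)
Fix an optimal permutation $\tau\in\bbS_K$ in \eqref{eq:bayes_metric}, and for each $k$ an optimal column permutation in $d_M(\Theta_k,\Theta'_{\tau(k)})$. By exchangeability of $p_\beta$, relabelling the columns of $\Theta'_{\tau(k)}$ leaves $P_{\psi'}$ unchanged, and so does relabelling the components. We may therefore assume $\tau=\mathrm{id}$ and that the column matching is the identity, so that
$$d(\psi,\psi')=\sum_k \Big(\sum_j\norm{\theta_{kj}-\theta'_{kj}}+|\pi_k-\pi_k'|+|\sigma_k^2-\sigma_k'^2|\Big).$$
The central tool is joint convexity of KL. Write both models as mixtures over the same latent pair $(z,\beta)$: $P_\psi=\sum_k\pi_k\int \cN(\Theta_k\beta,\sigma_k^2I)\,dp_\beta(\beta)$, and similarly for $P_{\psi'}$. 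By the chain rule for KL applied to the joint laws of $(z,\beta,X)$, followed by data processing under marginalization to $X$,
$$\KL(P_\psi\Vert P_{\psi'})\le \KL(\pi\Vert\pi')+\sum_k\pi_k\int \KL\big(\cN(\Theta_k\beta,\sigma_k^2I)\Vert\cN(\Theta_k'\beta,\sigma_k'^2I)\big)\,dp_\beta(\beta).$$

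The Gaussian term has the closed form
$$\frac{\norm{(\Theta_k-\Theta_k')\beta}^2}{2\sigma_k'^2}+\frac D2\Big(\frac{\sigma_k^2}{\sigma_k'^2}-1-\log\frac{\sigma_k^2}{\sigma_k'^2}\Big).$$
For the mean part, since $\beta\in\Delta^{d-1}$ we have $\norm{(\Theta_k-\Theta_k')\beta}\le\max_j\norm{\theta_{kj}-\theta'_{kj}}\le d_M(\Theta_k,\Theta_k')$, and its square is at most $d(\psi,\psi')^2$ after weighting by $\pi_k$ and summing. This produces the $d^2/(2\underline\sigma^2)$ term. For the variance part, use $x-1-\log x\le (x-1)^2/x$ or, more crudely, $x-1-\log x\le |x-1|$ when $x$ is close to $1$. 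Cleaner still is $t-1-\log t\le |t-1|\cdot\max(1,1/t)$, which gives a bound of order $\frac{D}{2}|\sigma_k^2-\sigma_k'^2|/\underline\sigma^2$ up to factors involving $\diam(S)$. This yields a linear term in $d(\psi,\psi')$ with a constant depending on $D$ and $\diam(S)$.

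The hard part is the weight term $\KL(\pi\Vert\pi')$, which is not bounded by $|\pi-\pi'|$ uniformly when $\pi'_k$ can be near $0$. The claimed constant does not involve $\min\pi_k'$, so I would avoid the crude chain-rule step for $z$. Instead I would use convexity more cleverly. Write $P_{\psi'}$ as a mixture of the components $\pi_k'p'_k$ and couple at the level of densities: bound $\log(p_\psi/p_{\psi'})$ pointwise via the log-sum inequality applied to $\sum_k\pi_kp_k$ versus $\sum_k\pi'_kp'_k$. Alternatively, interpolate through the intermediate model $\tilde\psi=(\bTheta',\pi,\Sigma')$, with the first step handled by the joint-convexity bound above and no weight term.

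For the remaining weight change, I would then bound the KL directly by a term linear in $\sum_k|\pi_k-\pi_k'|$. Here I would exploit that every component density is a Gaussian location mixture over the compact set $\cT$ with variances in $S$, so the log-ratio of any two components is bounded by a quadratic in $x$ with coefficients controlled by $\diam(\cT)^2$ and $\diam(S)$. Integrating against $P_\psi$ (whose second moment is bounded by $D\sup S+\diam(\cT)^2$) gives $\int |p_\psi-p_{\tilde\psi}|\cdot|\log(\cdot)|\lesssim \|\pi-\pi'\|_1\,(D+(\diam(\cT)^2+D\diam(S))/2)/\underline\sigma^2$. This is exactly the shape of $C$ in the statement.

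The main obstacle I expect is making this weight step rigorous without a $\min_k\pi_k'$ dependence, since a log-ratio bound of the form $\log(p_{\tilde\psi}/p_{\psi'})$ is not automatically controlled. I would first try bounding $\KL$ by the $\chi^2$-type integral $\int (p-p')\log(p/p')$ combined with the quadratic log-ratio bound between mixtures over the common compact parameter set.
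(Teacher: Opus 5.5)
Your treatment of the part with matched weights is sound. The chain rule over $(z,\beta,X)$, the closed-form Gaussian KL, $\norm{(\Theta_k-\Theta_k')\beta}\le d_M(\Theta_k,\Theta_k')$, and $t-1-\log t\le |t-1|\max(1,1/t)$ are all correct; the last one in fact gives $\frac{D}{2\underline{\sigma}^2}|\sigma_k^2-\sigma_k'^2|$ with no $\diam(S)$ factor.

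The gap is the weight change, which you yourself flag as ``the main obstacle'' and leave open. It is the entire difficulty of the proposition, and your plan for it does not close:
\begin{itemize}
\item Interpolating through $\tilde\psi=(\bTheta',\pi,\Sigma')$ and bounding $\KL(P_\psi\Vert P_{\tilde\psi})$ and $\KL(P_{\tilde\psi}\Vert P_{\psi'})$ separately proves nothing, because KL has no triangle inequality. You would have to write $\KL(P_\psi\Vert P_{\psi'})=\KL(P_\psi\Vert P_{\tilde\psi})+\int p_\psi\log(p_{\tilde\psi}/p_{\psi'})$ and then control the cross term.
\item Your displayed bound $\int|p_\psi-p_{\tilde\psi}|\,|\log(\cdot)|\lesssim\norm{\pi-\pi'}_1$ mixes up the pairs: $\psi$ and $\tilde\psi$ have the same weights.
\item The match with the shape of $C$ is read off from the statement rather than derived. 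A quadratic log-ratio bound integrated against second moments would produce constants involving $\underline{\sigma}^{-4}$ and $\sup S$, not $C/(2\underline{\sigma}^2)$.
\end{itemize}

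The missing idea is to apply joint convexity with a \emph{non-diagonal} coupling of the weights, instead of the identity pairing that forces $\KL(\pi\Vert\pi')$ to appear.
\begin{itemize}
\item For any coupling $q=(q_{kk'})$ of $\pi$ and $\pi'$, $P_\psi=\sum_{k,k'}q_{kk'}Q_k$ and $P_{\psi'}=\sum_{k,k'}q_{kk'}Q'_{k'}$. Hence $\KL(P_\psi\Vert P_{\psi'})\le\sum_{k,k'}q_{kk'}\KL(Q_k\Vert Q'_{k'})$, and no ratio $\pi_k/\pi_k'$ ever enters.
\item Take $q_{kk}=\min(\pi_k,\pi_k')$. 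Then the off-diagonal mass is $\sum_{k\neq k'}q_{kk'}=\frac12\norm{\pi-\pi'}_1\le\epsilon/2$.
\item The diagonal terms total at most $(\epsilon^2+D\epsilon)/(2\underline{\sigma}^2)$, by your own shared-$\beta$ Gaussian computation.
\item Each off-diagonal term obeys the crude uniform bound $(\diam(\cT)^2+D\diam(S))/(2\underline{\sigma}^2)$, coupling $G_k$ and $G'_{k'}$ arbitrarily inside $\conv(\cT)$.
\item Summing gives exactly $\frac{1}{2\underline{\sigma}^2}\big(\epsilon^2+(D+(\diam(\cT)^2+D\diam(S))/2)\epsilon\big)$. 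The factor $\frac12$ in $C$ is precisely the off-diagonal mass.
\end{itemize}

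Your route could be rescued, but only with a worse, differently structured constant. The key observation would be that the log-ratio of two mixtures over the same component densities is bounded by the worst component log-ratio, so it is weight-free. You would combine that with $\KL(P\Vert R)\le\int(p-r)\log(p/r)$ and a separate bound on the cross term above.
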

The proof of the proposition is provided in Appendix \ref{app:proof_prop:kl_upper bound}. Note that the restrictions imply that the parameter space is compact and the component variances are bounded away from 0. This upper bound allows us to control entropy of the model class and prior mass conditions of the KL-ball around the truth through corresponding entropy for the parameter space and Euclidean neighborhoods around the parameter, that will be required in the posterior contraction result in Section \ref{sec:posterior_contraction_rates}. An interesting aspect of the above result is that it does not use any assumption on $\bTheta$ (such as those required for identifiability, or those to be considered in the next section). Lastly, we note that for small enough $d(\psi,\psi')=\epsilon$, the dominating term in the upper bound is the linear term.

\subsection{Inverse Bound}
In this section, we derive a key lower bound on the total variation distance by the metric between the corresponding parameters. Note that this is the opposite direction of what we showed in Proposition \ref{prop:kl_upper bound}. This step is typically hard. In our case, there are several additional challenges compared to those in the existing finite mixture setting. Firstly, the model considered in this work is a mixture model \eqref{eq:density_1}, where the kernel is 
$$f(\cdot|\Theta) = \int_{\cS} \phi(\cdot|\eta,\sigma^2 I) dG(\eta),$$
where $G$ is a probability measure on a polytope $\cS$.  The notion of first-order identifiability, which results in inverse bounds, is related to the linear independence of the kernel and its gradients with respect to the parameters. In our case, even within a component, there is a (continuous) mixture with the Gaussian location kernel. Understanding linear independence of such a kernel along with its derivative with respect to the parameters is challenging. Apart from the interaction between different components, there is interaction between parameters associated to a single component, i.e., the end-members of a single component -- this posits additional difficulty to analyze such a kernel. Our approach is to introduce a suitable and natural geometric assumption, that allows us to overcome these challenges.  We define these notions of \textit{sequential} and \textit{total exposure} related to the geometric arrangement of the component polytopes in the following.

\begin{figure}
    \centering
    \includegraphics[clip, trim=0cm 4cm 0cm 7cm, width=0.8\linewidth]{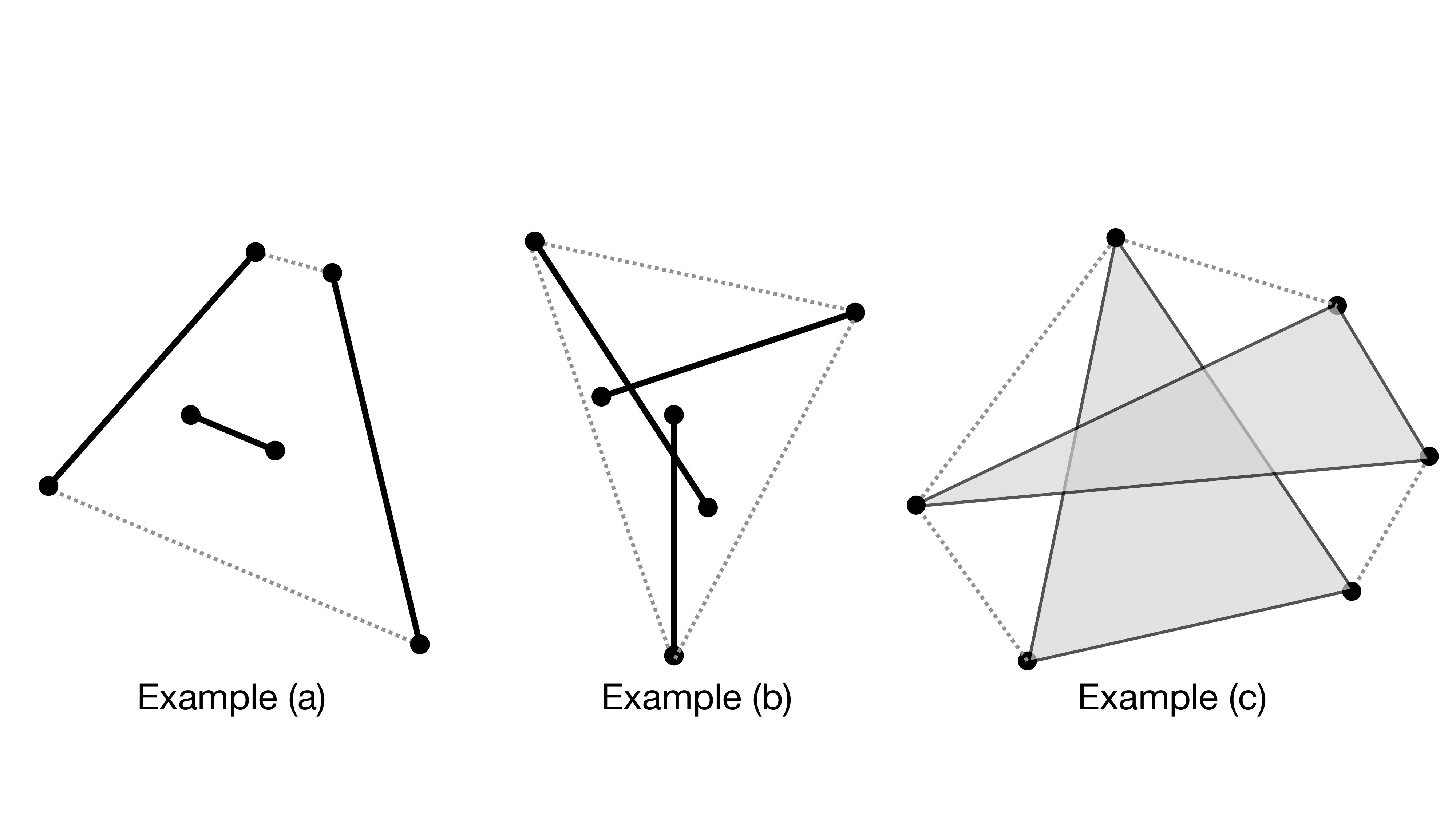}
    \caption{Examples in $\bbR^2$ illustrating total exposure definition: Examples (a) and (b) satisfy \hyperref[assume:A]{A}, but neither are totally exposed In (a), two of the three polytopes are exposed, while no polytope is exposed in (b). Example (c) is totally exposed but does not satisfy \hyperref[assume:A]{A}. Note when ambient dimension $D$ is large and component polytopes are in general position, they almost surely satisfy both \hyperref[assume:A]{A} and totally exposed.}
    \label{fig:sequentially_exposed}
\end{figure}

\begin{definition}\label{def:exposed}
 Given $\bTheta=(\Theta_1,\dots,\Theta_K)$, let $\cS_k=\conv(\theta_{k,1},\dots,\theta_{k,d})$ be the polytope corresponding to $\Theta_k$. Denote $\cC(\bTheta)=\{\theta_{k,j}\mid k\in[K], j\in [d]\}$.
\begin{enumerate}[label=(\roman*)]
    \item $\theta\in \cC(\bTheta)$ is said to be exposed with respect to $\bTheta$ if $\theta\in \extr(\conv(\cC(\bTheta)))$ and $\theta\in \cS_j\setminus \cup_{k\neq j} \cS_k$ for some $j\in[K]$.
    \item $\Theta_k$ is said to be exposed with respect to $\bTheta$ if $\theta_{k,j}$ is exposed with respect to $\bTheta$ for all $j\in[d]$.
    \item $\bTheta$ is said to be totally exposed if $\Theta_k$ is exposed with respect to $\bTheta$ for any $k\in[K]$.
\end{enumerate}
\end{definition}

The above definition extends the notion of \textit{exposed} defined in Section \ref{sec: polytope case}, which can be seen as a special case of Definition \ref{def:exposed}(i) for the case $\bTheta=(\Theta)$, corresponding to a single component -- we will refer to this special case as $\Theta$ is exposed with respect to itself, for clarity. A point $\theta\in \cC(\bTheta)$ is said to be exposed if it is not only an extreme point of the global convex hull, but also belongs uniquely to a single polytope. Thus, for a collection of polytopes, exposed points are those that are both globally extremal and locally identifiable to a specific component. If all vertices of $\Theta_k$ are exposed (with respect to the collection $\bTheta$), then we call this component exposed. Note that if $\Theta_k$ is exposed with respect to $\bTheta$, then $\theta_{k,1},\dots,\theta_{k,d}$ are all vertices of the global convex hull, and hence must be vertices of $\cS_k$. 
Total exposure requires each component to be simultaneously exposed with respect to the full collection. See Figure \ref{fig:sequentially_exposed} for an illustration.

\begin{remark}  \textit{    (Assumption \hyperref[assume:A]{A} versus exposure)}
    The assumption \hyperref[assume:A]{A} and the notions of exposure are not directly connected. Total exposure does not necessarily imply Assumption \hyperref[assume:A]{A} -- there are examples of convex polytopes with the same affine space that are totally exposed, yet have non-trivial intersection. Conversely, a collection of polytopes satisfying \hyperref[assume:A]{A} need not be totally exposed. See Figure \ref{fig:sequentially_exposed} for examples.
\end{remark}

We provide a brief discussion into the identifiability theory with such exposure assumptions. In the following, we call $\psi_0\in\Psi_{(\cT,S)}$ identifiable if for any $\psi\in\Psi_{(\cT,S)}$, $d_{\TV}(p_{\psi},p_{\psi_0})=0\Rightarrow d(\psi,\psi_0)=0$. In the current parametrization, if the supporting polytopes $\cS_1,\dots,\cS_K$ of $\psi_0$ have distinct affine spaces, then they satisfy \hyperref[assume:A]{A}. Together with assuming that each $\Theta_k$ is exposed (with respect to itself), Corollary \ref{cor: identifiability polytope} ensures that $\psi_0$ is identifiable.  The results in Section \ref{sec:identifiability} were general and applicable for nonparametric mixtures, and as such required the component supports to be separated in the sense of Assumption \hyperref[assume:A]{A}. However, for the current class of parametrized mixtures whose component latent measures have polytope supports, we can extend the identifiability result to be based on sequential exposure instead. 

\begin{lemma}\label{lemma:identifiability_sequentially exposed}
        Consider $\psi_0=(\bTheta_0, \pi_0, \Sigma_0)\in\Psi_{(\cT,S)}$ such that $\bTheta_0$ is totally exposed. Then $\psi_0$ is identifiable, i.e., for all $\psi\in\Psi_{(\cT,S)}$  we have $d_{\TV}(p_{\psi},p_{\psi_0})=0\Rightarrow d(\psi,\psi_0) =0$.
    \end{lemma}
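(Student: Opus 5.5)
The plan is to argue directly with characteristic functions, peeling off components in order of increasing noise level, as in the proof of Theorem \ref{thm:isotropic_identifiability}. Assumption \hyperref[assume:A]{A} is replaced at the noiseless step by a vertex-matching argument that uses total exposure. Suppose $p_\psi=p_{\psi_0}$. Then for all $s\in\bbR^D$,
$$\sum_k \pi_{0k}\,\varphi_{G_{0k}}(s)\,e^{-\sigma_{0k}^2\norm{s}^2/2}=\sum_k \pi_k\,\varphi_{G_k}(s)\,e^{-\sigma_k^2\norm{s}^2/2}.$$
Let $\sigma_*^2$ be the smallest variance appearing in either parameter. Dividing by $e^{-\sigma_*^2\norm{s}^2/2}$ (the Gaussian family satisfies Assumption \hyperref[assume:C]{C}) gives an identity of measures: on each side, the components with variance $\sigma_*^2$ contribute their low-dimensional $G$'s, and all other components are absolutely continuous. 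If $\sigma_*^2$ were attained on only one side, that side would have a nonzero singular part and the other would not, which contradicts Lemma \ref{lemma:identify_lower_dim_abs_cont}. So the minimum is attained on both sides, and Lemma \ref{lemma:identify_lower_dim_abs_cont} yields equality of the singular parts:
$$H_0:=\sum_{k:\sigma_{0k}=\sigma_*}\pi_{0k}G_{0k}=\sum_{k:\sigma_k=\sigma_*}\pi_kG_k=:H.$$
Once these components are matched and removed, we subtract them, renormalize, and iterate. This is an induction on the number of distinct variance levels. Matching the variances is automatic from this construction.

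The core step is to show that $H_0=H$ forces the components of the two sums to agree up to permutation, using only that $\bTheta_0$ is totally exposed. First observe that any subcollection of a totally exposed collection is totally exposed. Indeed, an exposed vertex of the full collection is extreme in the full hull, hence extreme in the smaller hull, and it still lies in no other polytope. Now take an exposed vertex $v$ of some $\cS_{0j}$. Since $\supp H_0=\cup_k\cS_{0k}$ and $v$ is extreme in its convex hull, $v$ must lie in $\supp H=\cup\cS_k$ and be extreme there. Hence $v$ is a column $\theta_{k,\ell}$ of some $\Theta_k$ in $\psi$. Moreover, in a small ball $B(v,r)$ that meets no other $\cS_{0i}$, $H_0$ coincides with $\pi_{0j}G_{0j}$. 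Therefore every $\psi$-polytope $\cS_k$ that meets $B(v,r)$ has its germ at $v$ contained in the cone of $\cS_{0j}$ at $v$.

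Next comes a global counting step carried over all peeling levels. The $Kd$ exposed points of $\bTheta_0$ are pairwise distinct, since each lies in a unique polytope. Each of them must appear among the at most $Kd$ columns of $\bTheta$. Hence the columns of $\bTheta$ are in bijection with the vertices of $\bTheta_0$, and in particular every $\cS_k$ is a polytope whose vertices are all exposed vertices of $\bTheta_0$.

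The step I expect to be the main obstacle is the grouping: showing that the $d$ columns of a single $\Theta_k$ are exactly the vertices of a single $\Theta_{0j}$. My first attempt is the local argument above. Near $v\in\extr(\cS_{0j})$, the measure $H$ is a sum of pushforwards of the same $p_\beta$ under polytopes with vertex $v$, and it must reproduce the germ of $\pi_{0j}G_{0j}$. Since $p_\beta\ll\cH_{d-1}$ with full support, the local cone of each such $\cS_k$ at $v$ is spanned by edges towards its other vertices. If any of those other vertices belonged to some $\cS_{0i}$ with $i\neq j$, the edge would be a segment in $\supp H=\cup\cS_{0k}$ starting inside $\cS_{0j}$ and ending at an extreme point outside it. I would try to rule this out using the convexity of each $\cS_{0i}$ and extremality of the endpoints, comparing local Hausdorff densities along the edge. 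If this local argument proves delicate, the fallback is a mass-comparison argument: use the bijection of vertices together with the fact that the $H$-mass of small balls around each vertex is determined by $\pi$ and the corner behaviour of $p_\beta$, and match these masses across the two representations.

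Once $\Theta_k=\Theta_{0j}$ up to column permutation, exchangeability of $p_\beta$ gives $G_k=G_{0j}$, and comparing masses gives $\pi_k=\pi_{0j}$. Finishing the induction over variance levels then yields $d(\psi,\psi_0)=0$.
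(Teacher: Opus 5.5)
Your scaffolding is sound, and in one respect more careful than the paper's own write-up. You peel by variance level, note that subcollections of a totally exposed collection stay totally exposed, and count level by level ($m'_\ell\ge m_\ell$ at every level, with $\sum_\ell m'_\ell=\sum_\ell m_\ell=K$). That does give a level-respecting bijection between the columns of $\bTheta$ and the exposed vertices of $\bTheta_0$.

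The gap is the grouping step, which is the real content of the lemma; you leave it open, and neither route you sketch closes it.
\begin{itemize}
\item A segment of the common support joining a vertex of $\cS_{0j}$ to a vertex of a different $\cS_{0i}$ can genuinely exist (Figure \ref{fig:sequentially_exposed}(c)). So such an edge cannot be excluded by looking at $\supp H$.
\item Comparing densities along the edge carries no information, because an edge is a null set for the component measures once the polytopes are at least two-dimensional.
\item Masses of small balls around vertices depend on the unknown local cone geometry, so they cannot decide the grouping either.
\end{itemize}
The weakness traces back to your statement that the germ of $\cS_k$ at $v$ is only \emph{contained} in the cone of $\cS_{0j}$. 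With containment alone, an edge of $\cS_k$ at $v$ could point into the interior of that cone, exactly as in Figure \ref{fig:sequentially_exposed}(c).

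The missing idea is to upgrade containment to equality and then use extremality along edge directions; this is essentially the paper's extreme-ray argument.
\begin{itemize}
\item Any $\psi$-polytope containing $v$ has $v$ as a vertex, since $v$ is extreme in the common hull. By your bijection, $v$ is a column of exactly one $\Theta_k$.
\item So for small $r$ the only $\psi$-polytope meeting $B(v,r)$ is $\cS_k$, and equality of supports gives $\cS_k\cap B(v,r)=\cS_{0j}\cap B(v,r)$.
\item Hence the tangent cones of $\cS_k$ and $\cS_{0j}$ at $v$ coincide, and so do their extreme rays, i.e.\ the edge directions at $v$.
\item Fix such a direction $u$. The $\cS_{0j}$-neighbour $w_0=v+t_0u$ and the $\cS_k$-neighbour $w=v+tu$ are both exposed vertices of $\bTheta_0$ (the latter by the bijection), hence extreme points of the common hull. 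If $t\neq t_0$, the nearer one lies strictly inside a segment of that hull, a contradiction. So $w=w_0$.
\item Since $w_0$ is again a column of $\Theta_k$ only and a vertex of $\cS_{0j}$ only, the argument repeats at $w_0$.
\item Connectivity of the edge graph of a polytope then shows that $\cS_k$ and $\cS_{0j}$ have the same vertex set, i.e.\ $\Theta_k=\Theta_{0j}$ up to column permutation.
\end{itemize}
With this in place, the rest of your plan goes through: exchangeability gives $G_k=G_{0j}$, the mass near $v$ gives $\pi_k=\pi_{0j}$, and the induction over variance levels finishes the proof.
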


Although Assumption \hyperref[assume:A]{A} was sufficient for identifiability purpose, it is not enough for first-order identifiability which requires a slightly stronger notion of separation. The notion of total exposure turns out to be precisely what is required to prove the following inverse bound.

\begin{theorem}\label{thm:inverse_bound}
    Let $\psi_0\in \Psi$, such that $\bTheta_0$ is totally exposed. Then we have the following.
    $$\liminf_{\substack{\psi\in \Psi \\d(\psi,\psi_0)\to 0}} \frac{d_{\TV}(P_{\psi}, P_{\psi_0})}{d(\psi,\psi_0)} > 0.$$
\end{theorem}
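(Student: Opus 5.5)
We argue by contradiction, following the standard first-order identifiability route of \cite{ho2016strong}. Suppose the $\liminf$ is zero. Then there is a sequence $\psi_n=(\bTheta_n,\pi_n,\Sigma_n)\to\psi_0$ in $d$ with $d_{\TV}(P_{\psi_n},P_{\psi_0})/d(\psi_n,\psi_0)\to 0$. After relabeling components and columns with the optimal permutations in \eqref{eq:bayes_metric}, we may assume $\theta^n_{k,j}\to\theta^0_{k,j}$, $\pi^n_k\to\pi^0_k$ and $\sigma^{2,n}_k\to\sigma^{2,0}_k$ entrywise. Write $\delta_n=d(\psi_n,\psi_0)$ and expand $p_{\psi_n}-p_{\psi_0}$ to first order via Taylor's theorem in the parameters:
\[
p_{\psi_n}(x)-p_{\psi_0}(x)=\sum_{k}\Big[(\pi_k^n-\pi_k^0)f_k(x)+\pi_k^0\sum_{j}\innerprod{\theta^n_{k,j}-\theta^0_{k,j},\,\partial_{\theta_{k,j}}f_k(x)}+\pi_k^0(\sigma^{2,n}_k-\sigma^{2,0}_k)\,\partial_{\sigma^2}f_k(x)\Big]+R_n(x).
\]
Here $f_k(x)=\int\phi(x\mid\Theta_k^{0\top}\beta,\sigma_k^{2,0}I)\,dP_\beta(\beta)$ and $\partial_{\theta_{k,j}}f_k(x)=\int\beta_j\,\nabla_\eta\phi\,dP_\beta$, up to sign. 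The remainder satisfies $\norm{R_n}_{L^1}=o(\delta_n)$, because the Gaussian kernel has second derivatives that are integrable uniformly on a neighbourhood of $\psi_0$, using $\sigma^{2,0}_k>0$. Dividing by $\delta_n$ and passing to a subsequence, the normalized coefficients converge to some $(a_k,b_{k,j},c_k)$, not all zero, with $\sum_k a_k=0$. Fatou's lemma then gives the pointwise identity
\[
\sum_k\Big[a_kf_k+\sum_j\innerprod{b_{k,j},\partial_{\theta_{k,j}}f_k}+c_k\,\partial_{\sigma^2}f_k\Big]\equiv 0 .
\]
This identity is the first-order linear-independence statement we must contradict.

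To show all coefficients vanish, we pass to the latent level. Since $\partial_{\sigma^2}\phi=\tfrac12\Delta\phi$ for the isotropic heat kernel, everything is expressed through $\phi_{\sigma_k^2}\star(\cdot)$ applied to distributions supported on the polytopes $\cS_k$:
\[
\widehat{\cdot}(s):\quad \sum_k e^{-\sigma_k^{2,0}\norm{s}^2/2}\Big[a_k\varphi_{G_k}(s)+i\sum_j s^\top b_{k,j}\,\varphi_{\beta_j\cdot G_k}(s)-\tfrac{c_k}{2}\norm{s}^2\varphi_{G_k}(s)\Big]=0,
\]
where $\beta_j\cdot G_k$ denotes the pushforward of $\beta_j\,dP_\beta$. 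We then use the ordering idea from the proof of Theorem~\ref{thm:isotropic_identifiability}: take the smallest variance $\sigma_*^2$ and divide by $e^{-\sigma_*^2\norm{s}^2/2}$. What remains is a sum of compactly supported distributions of order at most two, supported on $\cup\cS_k$ (for $\sigma_k=\sigma_*$), plus terms that are real-analytic smoothings (for $\sigma_k>\sigma_*$). A support and regularity argument separates the two groups, so we can peel off one noise level at a time.

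Within a level, we use total exposure. Take an exposed vertex $\theta=\theta^0_{k,j}$. It is an extreme point of $\conv(\cC(\bTheta_0))$, so there is a supporting hyperplane $\{u^\top x=c\}$ that touches the global hull only near $\theta$. Moreover, $\theta$ lies in $\cS_k$ only, so in a small neighbourhood only component $k$ contributes. Near $\theta$ we have $G_k$ locally equal to the pushforward of $P_\beta$ near the vertex $e_j$. Among the distributions involved, $\partial_{\theta_{k,j}}$ yields a derivative of the measure $\beta_jG_k$, which concentrates weight at $\theta$ (since $\beta_j\to1$ there) differently from $G_k$ and its Laplacian. Testing against functions localized near $\theta$ and shifted along $u$, or equivalently examining the asymptotics of the Fourier transform in the direction $u$, which is governed by the extreme point, isolates $b_{k,j}=0$. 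Doing this for every vertex of every component (all are exposed under total exposure) kills all $b$. The remaining relation $\sum_k(a_k-\tfrac{c_k}{2}\Delta)G_k$, taken within a level, must then vanish. Evaluating near an exposed vertex of each $\cS_k$, which is disjoint from the others, the absolute continuity $G_k\ll\cH_{d-1}$ forces $a_k=c_k=0$. This contradicts the normalization and proves the theorem.

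The main obstacle is the vertex-localization step. We must show that $\innerprod{b,\nabla}(\beta_jG_k)$, $\Delta G_k$ and $G_k$ are linearly independent as distributions near $\theta$ without knowing the fine behaviour of $p_\beta$ at the vertex. The first approach is a directional Laplace-type asymptotic: compare $\int e^{t u^\top\eta}\,d(\cdot)$ as $t\to\infty$, where all three terms are dominated by the neighbourhood of $\theta$, and exploit the extra factor $\beta_j$ together with the distinct polynomial orders in $t$ coming from first versus second derivatives. A secondary technical point is making the order-by-$\sigma$ peeling rigorous when components share the same variance.
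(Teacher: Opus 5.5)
Your proposal is correct in outline, and its core agrees with the paper's argument; the difference is how you handle the noise levels.

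\textbf{How the paper proceeds.} The paper never separates variance levels. It orders components by decreasing $\sigma_k^2$. It then projects the first-order identity onto one coordinate in which a vertex of the largest-variance component is the unique minimizer. Finally it divides successively by $x^2e^{-x^2/(2\sigma_1^2)+\cdots}$, $x e^{\cdots}$ and $e^{\cdots}$, letting $x\to-\infty$. Smaller-variance components vanish automatically in the tail, and equal-variance ones are beaten by the exposed direction.

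\textbf{Same vertex mechanism.} Dividing a Gaussian-smoothed measure by its Gaussian tail is exactly a Laplace transform of the latent measure along that direction. So the paper's vertex step is your directional Laplace asymptotic. In both, the polynomial orders $x^2$, $x$, $1$ separate the variance, vertex and weight perturbations, and $\beta_j\approx 1$ near the vertex isolates that vertex's own perturbation.

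\textbf{What is genuinely different.} You deconvolve from the smallest variance, as in Theorem~\ref{thm:isotropic_identifiability}. This step is rigorous. The smoothed remainder is continuous and equals minus a distribution supported on the Lebesgue-null compact set $\bigcup_k\cS_k$. Hence it vanishes on a dense open set, and so identically. Your route reduces everything to noise-free compactly supported distributions. It makes clear that exposure is only needed among components sharing a variance, and it removes your worry about ties, since equal variances simply stay in one level. The paper's route is more elementary: one-dimensional, in real space, with no distribution theory.

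\textbf{Completing the vertex step.} You leave this step as a sketch. Here is what it needs; note that it fixes the order of elimination.
\begin{enumerate}
\item Let $\theta=\theta_{k,j}$ be the unique maximizer of $u^\top(\cdot)$ over the vertices in the level. Set $L(\beta)=u^\top(\theta-\Theta_k\beta)\ge 2\delta(1-\beta_j)$ and $J(t)=\int e^{-tL}\,dP_\beta$.
\item Full support of $p_\beta$ gives $J(t)\ge e^{-\epsilon t}P_\beta(L<\epsilon)$ for every $\epsilon>0$. After multiplying by $e^{-tu^\top\theta}$, this dominates the $O(t^2e^{-2\delta t})$ contributions of the other polytopes. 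It also makes $e^{-tL}dP_\beta/J(t)$ concentrate at $e_j$.
\item The $t^2$ coefficient forces $c_k=0$ first; your later absolute-continuity step for $c_k$ is unnecessary.
\item The order-$t$ coefficient then gives $u^\top b_{k,j}=0$ for all $u$ in a nonempty open cone, hence $b_{k,j}=0$.
\item Only once every $b_{k,j'}$ is gone does the order-one term give $a_k=0$. The reason is that $t\int\beta_{j'}e^{-tL}dP_\beta/J(t)$ need not tend to $0$ for $j'\ne j$; for uniform $p_\beta$ it tends to a nonzero constant.
\end{enumerate}
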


\begin{proof}[Proof sketch]
The proof of the inverse bound is in the Appendix \ref{app:proof_thm:inverse_bound}. We use the standard proof by contradiction by assuming the existence of a sequence $\psi_n$ such that $d(\psi_n,\psi_0)\to 0$ and the ratio  $d_{\TV}(p_{\psi_n},p_{\psi_0})/d(\psi_n,\psi_0) \to 0$ as $n\to\infty$. This leads us to the first-order identifiability condition
\begin{align*}
        \sum_{k\in[K]} b_k\int &\phi(x\mid \Theta_k^\top\beta, \sigma_k^2)dP_{\beta}(\beta) + \sum_{k\in[K]} \frac{\pi_k}{\sigma_k^2} \int \left(\sum_j \beta_j a_{kj}\right)^\top \left(x - \Theta_k^\top\beta\right) \phi(x\mid \Theta_k^\top\beta,\sigma_k^2)dP_{\beta}(\beta) \nonumber\\
        &+ \sum_{k\in[K]} \pi_k \int c_k\left(\frac{\norm{x - \Theta_k^\top\beta}^2}{2\sigma_k^4} - \frac{D}{2\sigma_k^2}\right)\phi(x\mid \Theta_k^\top\beta,\sigma_k^2) dP_{\beta}(\beta) = 0 \text{ a.s. } x\in\bbR^D
    \end{align*}
where all the parameters $\bTheta, \pi, \sigma_k$ correspond to $\psi_0$. The coefficients $\ba, \bb, \bc$ correspond to appropriate limits, such as $b_k = (\pi_k^{(n)} - \pi_k^{(0)}) / d(\psi_n, \psi_0)$, $a_{kj} = (\theta_{kj}^{(n)} - \theta_{kj}^{(0)})/d(\psi_n, \psi_0)$, etc. (using suitable subsequence, if required). This implies that not all of these coefficients can be 0 -- the remainder of the proof shows that if the above display is true, then all the coefficients must be zero which is the required contradiction. 

Dealing with the intractable integrals in the above display is the primary problem, noting that within the integral the effects of all the end-members in that component are combined together. Hence, for example, even if we show $a_{k1}=\boldsymbol{0}$, the effect of $\theta_{k1}$ is still there in the identity. Furthermore, the term $\sum_j \beta_j a_{kj}$ in the integrand poses challenges since isolating the effect of a particular $a_{kj}$ is hard. This is exactly where the geometric assumption of total exposure comes in. Under the assumption, we show that it is possible to re-parametrize in such a way that the components can be handled one at a time, and the effect of the parameters corresponding to a component can be analyzed. The exposed condition allows us to control the integral, in particular the mass in a small neighborhood around the $\theta_{k,j}$'s.
\end{proof}

    Note that identifiability of $\psi_0$ is a global condition, while the inverse bound above is of a local nature. Theorem \ref{thm:inverse_bound} ensures that given $\psi_0$ with totally exposed component polytopes, there exists $\epsilon(\psi_0)>0$ and a constant $C(\psi_0)$ such that whenever $d(\psi,\psi_0)<\epsilon$ with $\psi\in\Psi$, we have $d_{\TV}(p_{\psi},p_{\psi_0})\geq C(\psi_0) d(\psi,\psi_0)$. Following a similar argument as in Lemma 5.6 \cite{wei2020convergence}, the local nature of this condition could be made global restricting to $\Psi_{(\cT,S)}$ by leveraging compactness of the parameter space. In particular, if $\cT,S$ are compact, using $\psi\mapsto d_{\TV}(p_{\psi},p_{\psi_0})/d(\psi,\psi_0)$ is a continuous map and the fact that $\psi_0$ is identifiable (thanks to Lemma \ref{lemma:identifiability_sequentially exposed}), we can have a single constant $C(\psi_0)$, such that for any $\psi\in \Psi_{(\cT,S)}$, we have
    \begin{equation}\label{eq:lower_bound_tv}
        d_{\TV}(p_{\psi},p_{\psi_0}) \geq C(\psi_0) d(\psi,\psi_0).
    \end{equation}
Note that the above inequality implies that the parameter estimation rate cannot be worse than the density estimation rate, up to constants depending only on the ground truth model. Thus, to derive estimation rates for the parameter, we need estimation rates for the density. This is the approach we take in the following subsection.

\subsection{Posterior Contraction Rates}
\label{sec:posterior_contraction_rates}
Now we are ready to discuss rates for posterior contraction. Given i.i.d. samples $X_1,\dots,X_n$ from $P_{\psi_0}$, we place a prior $\Pi$ on the parameter space $\Psi_{(\cT,S)}$. Application of Bayes rule gives the posterior distribution of $\psi$ as
$$\Pi(\psi\in A \mid X_1,\dots,X_n) = \frac{\int_A \prod_{i\in[n]} p_{\psi}(X_i) d\Pi(\psi)}{\int_{\Psi_{(\cT,S)}}\prod_{i\in[n]} p_{\psi}(X_i) d\Pi(\psi) },$$
for any measureable set $A$. We wish to show that this posterior distribution concentrates around $\psi_0$ as $n\to\infty$. Towards this, we first show that the posterior distribution for the density estimation contracts to $p_{\psi_0}$ in Hellinger metric using standard techniques from \cite{ghosal2000convergence}. Note that the prior $\Pi$ on $\psi$ induces a prior distribution $\tilde{\Pi}$ on the model class $\{p_{\psi} : \psi\in \Psi_{(\cT,S)}\}$. Typically, a prior $\Pi$ on $\Psi_{(\cT,S)}$ is constructed by placing independent prior distributions for $\theta_{kj}\in\cT$ for all $k,j$, a prior distribution for $\pi\in\Delta^{K-1}$ independently and for each $k$, independent prior distributions for $\sigma_k^2\in S$. 

\begin{theorem}\label{thm:density_contraction}
    Let $\psi_0\in \Psi_{(\cT,S)}$ with $\cT\subset\bbR^D,S\subset R_+$ compact with $\inf S > 0$, such that the component polytopes of $\psi_0$ are totally exposed. Suppose $\Pi$ is a prior on $\Psi_{(\cT,S)}$ such that $\Pi=\Pi_{\bTheta}\times \Pi_{\pi}\times \Pi_{\Sigma}$, where $\Pi_{\bTheta}$ is a probability measure on $\cT^d$ that is absolutely continuous with a density bounded away from $0$ and $\infty$, $\pi_{\pi}$ is a probability measure on $\Delta^{K-1}$, absolutely continuous with respect to Lebesgue on $\bbR^{K-1}$, with density bounded away from 0 and $\infty$, and $\Pi_{\Sigma}$ is a probability measure on $S^K$, similarly.
    Then for sufficiently large $M$, as $n\to\infty$, we have the following:
    \begin{align}
        \Pi\left(d(\psi, \psi_0) > M\sqrt{\frac{\log n}{n}} \,\Big| \, X_1,\dots, X_n\right) \to 0 \text{ in } P_{\psi_0}^{\infty}-\text{probability}.
    \end{align}
\end{theorem}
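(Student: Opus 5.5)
The plan has two stages. First we prove Hellinger contraction of the induced posterior on densities at rate $\epsilon_n=\sqrt{\log n/n}$, using the general theorem of \cite{ghosal2000convergence}. Then we transfer this to the parameter metric $d$ through the global inverse bound \eqref{eq:lower_bound_tv}. The transfer is the short step. Because $d_{\TV}\le \sqrt{2}\,h$, \eqref{eq:lower_bound_tv} gives
\[
d(\psi,\psi_0)\le C(\psi_0)^{-1} d_{\TV}(p_\psi,p_{\psi_0}) \lesssim h(p_\psi,p_{\psi_0})
\]
for all $\psi\in\Psi_{(\cT,S)}$. Hence
\[
\{d(\psi,\psi_0)>M\epsilon_n\}\subset\{h(p_\psi,p_{\psi_0})>M'\epsilon_n\}
\]
with $M'=C(\psi_0)M/\sqrt2$. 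This is exactly where total exposure is used: through Theorem \ref{thm:inverse_bound} together with Lemma \ref{lemma:identifiability_sequentially exposed}, combined with compactness of $\Psi_{(\cT,S)}$.

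For the density contraction we verify the three conditions of \cite{ghosal2000convergence}, taking the sieve to be the whole model, since the parameter space is compact.

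\emph{Entropy.} We bound $\log N(\epsilon,\{p_\psi\},h)$. Proposition \ref{prop:kl_upper bound} gives $h^2\le \KL \lesssim d(\psi,\psi')$ for $d(\psi,\psi')$ small. So an $\epsilon^2$-net of $\Psi_{(\cT,S)}$ in $d$ is an $\epsilon$-net in $h$. Since $\Psi_{(\cT,S)}$ is a compact subset of a Euclidean space of dimension $KdD+(K-1)+K$, and $d$ is bounded by a multiple of the $\ell_1$ norm, the covering number is $\lesssim (1/\epsilon^2)^{KdD+2K}$. The log-entropy at $\epsilon_n$ is therefore $O(\log n)\le n\epsilon_n^2$.

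\emph{Prior mass of KL neighbourhoods.} This is the main obstacle. We need
\[
\Pi\bigl(\KL(p_{\psi_0}\Vert p_\psi)\le\epsilon_n^2,\ V(p_{\psi_0}\Vert p_\psi)\le\epsilon_n^2\bigr)\ge e^{-cn\epsilon_n^2}.
\]
The KL part follows from Proposition \ref{prop:kl_upper bound}: $d(\psi,\psi_0)\le c\epsilon_n^2$ implies $\KL\le\epsilon_n^2$. For the second moment $V=\int p_0\log^2(p_0/p_\psi)$, I would first use Jensen, writing each component as a Gaussian mixture over $\beta$ and the overall density as a mixture over $(k,\beta)$, to bound $\log(p_0/p_\psi)$ pointwise. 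Since $\cT$ is compact and the variances lie in $S$ with $\inf S>0$, the log-ratio is dominated by a quadratic in $\|x\|$ whose coefficients are $O(d(\psi,\psi_0))$ plus an $O(d(\psi,\psi_0))$ constant, after matching components via the optimal permutation. Gaussian moments then give $V\lesssim d(\psi,\psi_0)$. If this linear bound proves awkward, a fallback is Lemma 8.2 of \cite{ghosal2000convergence}: it controls $V$ by $h^2\log^2(1/h)$ once $p_0/p_\psi$ has bounded exponential moments, which holds here by Gaussian tails, and costs only log factors. Given this, the needed neighbourhood contains a $d$-ball of radius $c\epsilon_n^2$ (or $c\epsilon_n^2/\log^2 n$) around $\psi_0$. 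Here I also need care because $\pi_{0,k}>0$ puts $\psi_0$ in the interior of the simplex.

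\emph{Small-ball prior bound.} By the assumed product prior with densities bounded below on a compact set, that ball has prior mass $\gtrsim (\epsilon_n^2)^{KdD+2K-1}=e^{-O(\log n)}\ge e^{-cn\epsilon_n^2}$ for a suitable constant. The Ghosal–Ghosh–van der Vaart theorem then gives
\[
\Pi\bigl(h(p_\psi,p_{\psi_0})>M'\epsilon_n\mid X_{1:n}\bigr)\to0
\]
in $P_{\psi_0}^\infty$-probability. The inclusion from the first stage then yields the claim.
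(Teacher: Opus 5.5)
Your proposal is correct, but it handles the density-contraction step differently from the paper.

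\textbf{The paper's route.} The paper deliberately avoids the second-moment quantity $V$, calling it challenging for this nested mixture. It invokes Theorem 8.11 of \cite{ghosal2017fundamentals}, which needs only Kullback--Leibler balls in the denominator. In exchange, that theorem requires an upper bound on the prior mass of Hellinger shells $\{j\epsilon_n<h\le 2j\epsilon_n\}$. The paper gets this bound from the global inverse bound \eqref{eq:lower_bound_tv}. Total exposure is therefore used twice: inside the contraction argument and in the final transfer $h\to d$.

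\textbf{Your route.} You control $V$ and apply the standard theorem of \cite{ghosal2000convergence}. The Hellinger contraction step then does not use total exposure at all; exposure enters only through the transfer $h\to d$.

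\textbf{Make the Jensen step two-sided.} Your Jensen idea does go through, but state it in both directions. Apply Jensen once with the posterior weights of the latent pair $(k,\beta)$ under $p_{\psi_0}$, and once with the weights under $p_\psi$. This gives $|\log(p_{\psi_0}/p_\psi)(x)|\le\sup_{k,\beta}|\log\{\pi_k^0\phi(x\mid\Theta_k^0\beta,\sigma_{k,0}^2I)/(\pi_k\phi(x\mid\Theta_k\beta,\sigma_k^2I))\}|\lesssim d(\psi,\psi_0)(1+\|x\|^2)$. The last inequality uses $\min_k\pi^0_k>0$, $\inf S>0$, compactness of $\cT$, and exchangeability of $p_\beta$ to match columns. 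Hence $V\lesssim d(\psi,\psi_0)^2$, so the Lemma 8.2 fallback is unnecessary.

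\textbf{What each route buys.} Yours is more standard and self-contained, and it separates the density rate from the geometry. The paper's route avoids any pointwise analysis of the likelihood ratio, which could matter for noise kernels without such Gaussian structure. The price is relying on the inverse bound for the prior-shell condition.
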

 
\textit{Proof sketch: } The proof is provided in Appendix Section \ref{app:proof_contraction_rate} -- here we provide a high-level idea of the approach. Using the theory of posterior contraction for general densities, we aim to first establish a parametric contraction rate for the density estimation problem and then use inverse bound to translate it for parameter estimation. The former typically requires control of the model complexity through entropy numbers and control on the prior mass on a small neighborhood of the truth. The former can be accomplished in a straightforward manner thanks to the upper bound on the KL established earlier, which also gives an upper bound for the Hellinger distance between two densities from the model family, in terms of the distance between the corresponding parameters measured in terms of the metric $d$. For the other condition, the neighborhood is measured in terms of a KL-ball $\{p: d_{\KL}(p_0\Vert p)\leq \epsilon^2, K_2(p_0,p)\leq \epsilon^2\}$ where $K_2(p,q)=\bbE\left(\log (p/q) - d_{\KL}(p\Vert q)\right)^2$ -- however, it turns out controlling this quantity becomes challenging for this model. Hence, we use Theorem 8.11 \cite{ghosal2017fundamentals} which only use KL-balls of the form $\{p:d_{\KL}(p_0\Vert p)\leq \epsilon^2\}$, which can be controlled via metric $d$ because of the upper bound \ref{prop:kl_upper bound}. However, this approach also requires an upper bound on $\Pi(h(p,p_0)\leq \epsilon)$, which we control using the inverse bound since $\sqrt{2} h \geq d_{\TV} \gtrsim d$. After establishing these bounds, we just verify the conditions of Theorem 8.11 \cite{ghosal2017fundamentals}.

Note that the particular assumption on the prior is quite general and the typical construction in practice. However, the result can be easily extended to other priors, so that the prior mass condition is satisfied with respect to the target model $p_{\psi_0}$. 

\section{Numerical Study}\label{sec:numerical study}

\subsection{Algorithms}\label{sec:algorithm}
In this section, we give a brief description of the algorithms used for estimation of the parameters in the model. We first start with the case $K=1$, i.e. corresponding to a single component. In the sequel we use these methods as part of the overall algorithms for the general case $K>1$, where we use Expectation-Maximization and Metropolis-Hastings MCMC as our two algorithms.

\subsubsection{Single Component}
Let $\Theta\in \bbR^{D\times d}$ be the matrix containing the vertices of the target polytope as the columns, where we always assume $d<D$. The density of the data $X$ given $\theta, \sigma^2$ if given by
$$p(x|\Theta,\sigma^2) = \int \phi(x\mid \Theta\beta, \sigma^2 I_D)P_{\beta}(d\beta).$$
In the special case $p_{\beta}$ is a symmetric Dirichlet distribution and columns of $\Theta$ are linearly independent, a very fast geometric algorithm \textit{Voronoi Latent Admixture} (VLAD) \citep{yurochkin2019dirichlet} exists. Note that VLAD can also estimate $\alpha$ associated with the symmetric Dirichlet $p_{\beta}$ by a grid search.

For the case, $p_{\beta}$ is a Dirichlet distribution, not necessarily symmetric, a spectral algorithm can be developed utilizing the decomposition of the moment tensors of the Dirichlet distribution \citep{do2025dirichlet} up to the third order. This approach is based on \cite{anandkumar2012spectral} (Algorithm 5), but suitably modified to include the Gaussian kernel (instead of the multinomial kernel as in Latent Dirichlet Allocation). We describe this algorithm in detail in Appendix \ref{app:spectral_algorithm}. This algorithm can estimate $\balpha\in\bbR_+^d$ for the Dirichlet $p_{\beta}$, given $\bar{\alpha}=\sum_j \alpha_j$, but requires the columns of $\Theta$ to be linearly independent. Note that when $p_{\beta}$ is symmetric Dirichlet, this requires knowing $d\alpha$, which boils down to knowing $\alpha$.

A third approach involves a Gaussian approximation of the mixing measure $p_{\beta}$, assuming this is known. However, this algorithm gives up identifiability, since with a Gaussian kernel, only the column space of the covariance is identifiable, which means that the vertices can be estimated up to a rotation. We fix this by estimating an optimal rotation using K-Means clustering of the data. Compared to the spectral algorithm (which is known to be exact when the vertices are linearly independent), this algorithm works reasonably well even when the vertices form a convex polytope (not just a simplex).

The final class of algorithms we consider are Markov Chain Monte Carlo (MCMC) methods, under a fully Bayesian setting with the parameters associated with $p_{\beta}$ (i.e., the Dirichlet parameter $\alpha$) unknown. We utilize two algorithms -- the first one augments the data with the latent variables $\beta$ and develops a Metropolis Hastings within Gibbs sampler noting that conditional on $\beta$, the parameters $\Theta,\sigma^2$ has closed form posterior under traditional Bayesian regression setting (e.g. with a Normal-Inverse-Gamma prior), while the second approach marginalizes over the latent variables and uses a pseudo-marginal Metropolis Hastings \citep{andrieu2009pseudo} algorithm (in particular, the Grouped Independent Metropolis Hastings of \cite{beaumont2003estimation}). They are discussed in the Appendix \ref{app:MCMC_algorithm_single}.

\subsubsection{Multiple Components}
\label{subsec:multiple_components}
The previous section demonstrates that estimating the vertices under such a model is challenging even without the mixture (i.e., $K=1$). In this section, we discuss an algorithm for the case $K>1$. Apart from the algorithm described here, we also use other algorithms including two EM-based methods and a Grouped Independent Metropolis Hastings sampler, all described in Appendix \ref{app:multiple_component_algorithms}.

We consider an approach using a Gaussian mixture model as an approximation to the original model. The main challenge with the original model is due to the intractable integral over the latent variable
$$p(x|\pi,\bTheta,\sigma^2) = \sum_k \pi_k \int \phi(x\mid \Theta_k\beta, \sigma_k^2 I) P_{\beta}(d\beta).$$
In this approach, we thus approximate the model uniformly by Gaussian mixture models as follows. Let $Q^{(M)}$ be the (discrete) uniform distribution on a uniformly spaced grid of points $\beta_1,\dots,\beta_M$ in $\Delta^{d-1}$. These $\beta_1,\dots,\beta_M$ are fixed and not considered as random. Given these, we can construct a mixture of Gaussian distributions with $KM$ components, where the $(k,j)-$th component is $\cN(\cdot\mid \Theta_k\beta_j,\sigma_k^2 I)$ with mixture probability $\pi_k \nu_j/M$, where $\nu_j=p_{\beta}(\beta_j)$. Let us denote this model as $P^{(M)}$, which has the following likelihood
\begin{equation}\label{eq:em_approx}
    p^{(M)}(x\mid \pi,\bTheta,\sigma^2) = \sum_{k\in[K], j\in[M]} \frac{\pi_k \nu_j}{M} \phi(x\mid \Theta_k\beta_j,\sigma_k^2 I).
\end{equation}
Note that as $M\to\infty$, we have $\|p^{(M)} - p\|_{\infty} \to 0$. We therefore employ EM algorithm on this approximate model $p^{(M)}$ (with suitable level of $M$ based on computational resource). We next discuss the E and M steps of this algorithm, which closely follows the EM algorithm for the Gaussian mixture model, with the restrictions on the component parameters now.

\vspace{1em}
\textbf{E-Step:} Suppose we have $\xi_t=(\pi{(t)},\bTheta{(t)}, \sigma^2(t))$ as the current parameters. In the E-step, we compute the expected value of the latent cluster variable, which in turn is the weight associated to data point $X_i$ for each of the components in the mixture. In this case, it takes the following form

\begin{equation}\label{eq:E-step approx model}
    w_{i,(k,j)}(t+1) := \bbP_{z|x,\xi_t}(Z_i=(k,j)) \propto \pi_k(t)\nu_j \phi(x_i\mid \Theta_k(t)\beta_j, \sigma_k^2(t) I)
\end{equation}
We need to compute this for each $i\in[n]$ and $(k,j)\in [K]\times [M]$. The weights are scaled so that $\sum_{k,j} w_{i,(k,j)}(t+1) = 1$ for each $i$.

\vspace{1em}
\textbf{M-step:} In this step, the parameters are updated based on maximizing the expected complete log-likelihood, where the expectation is taken with respect to the conditional distribution of the latent cluster allocation variables $Z_1,\dots,Z_n$. The complete log-likelihood is given by
$$\ell_c(\xi|\bX) = \sum_{i\in[n]} \sum_{k,j} 1(Z_i=(k,j)) \left[\log(\pi_k) + \log \phi(X_i\mid \Theta_k\beta_j, \sigma_k^2 I)\right],$$
which gives the conditional expectation as
\begin{equation}
\begin{aligned}
    \bbE_{\bZ|\bX,\xi_t}[\ell_c(\xi|\bX)] &= \sum_{i,j,k} w_{i,(k,j)}(t+1) \left[\log(\pi_k) -\frac{D}{2}\log(2\pi\sigma_k^2) - \frac{1}{2\sigma_k^2}\norm{X_i - \Theta_k\beta_j}_2^2\right] \\
    &= \sum_{i,j,k} w_{i,(k,j)} \log(\pi_k) -\frac{D}{2}\sum_{i,j,k} w_{i,(k,j)}\log(2\pi\sigma_k^2) - \sum_{i,j,k} \frac{w_{i,(k,j)}}{2\sigma_k^2} \norm{X_i - \Theta_k\beta_j}_2^2,
\end{aligned}
\end{equation}
where we dropped the $t+1$ from the second line. Note that these weights depend on the current values of the parameters. However, in the M-step, we maximize the above display (given the weights) for the parameters $\pi,\bTheta,\sigma^2$ to obtain the updates for the next iteration. For notational convenience, denote
$w_{\cdot k\cdot} = \sum_{i,j} w_{i, (k,j)}.$
The updates take the following closed form solutions
\begin{equation}
\begin{aligned}\label{eq:M-step approx model}
    \pi_k(t+1) &\propto w_{\cdot k \cdot}, \quad \text{ subject to }  \sum_k \pi_k(t+1)=1 \\
    \Theta_k(t+1) &= \left(\sum_{i,j} w_{i,(k,j)} X_i\beta_j^\top \right)\left(\sum_{i,j} w_{i,(k,j)} \beta_j\beta_j^\top\right)^{-1}\\
    \sigma_k^2(t+1) &= \frac{1}{Dw_{\cdot k \cdot}}\sum_{i,j} w_{i,(k,j)}\norm{X_i - \Theta_k(t+1)\beta_j}_2^2.
\end{aligned}
\end{equation}
The overall algorithm starts with some initial choice for parameters $\xi_0$ and iterating between the E-step (equation \eqref{eq:E-step approx model}) and M-step (equation \eqref{eq:M-step approx model}) till convergence. To monitor convergence, we can keep track of the current value of the evidence lower bound, which can be computed by plugging in the optimal parameters in the expression for $\bbE_{\bZ|\bX,\xi_t}[\ell_c(\xi|\bX)]$, to get
$$\cE_{t+1} = \sum_k w_{\cdot k\cdot} \log (w_{\cdot k \cdot}) - \sum_k w_{\cdot k \cdot}\log (n) - \frac{D}{2}\sum_k w_{\cdot k \cdot}\log(2\pi \sigma_k^2(t+1)) - \frac{nD}{2}.$$
 In the implementation, we use $\beta_1,\dots,\beta_M$ drawn from a uniform distribution at the start of the algorithm -- however, it might be better to use different distribution (concentrated near the boundary) and replacing Equation \eqref{eq:em_approx} with an importance sampling step to make more efficient use of the $M$ latent variables.

 \subsection{Simulations and Results}\label{sec:simulations}
\begin{table}
    \centering
\begin{tabular}{ p{2cm} p{9cm} p{2cm}  }
 \hline
 Name & Brief description  & Reference\\
 \hline
 VLAD   & Geometric Algorithm \cite{yurochkin2019dirichlet} &   -\\
 Spectral &  Spectral Algorithm  \cite{anandkumar2012spectral} & \ref{app:spectral_algorithm}\\
 Gaussian & Approximate $p_{\beta}$ by Gaussian &  \ref{app:approx_gaussian_algorithm}\\
 MCMC (aug)  & MCMC (Gibbs) algorithm by augmenting with latent variables $\beta$ &  \ref{app:MCMC_algorithm_single}\\
 MCMC (mar)  & MH algorithm by marginalizing latent $\beta$ &  \ref{app:MCMC_algorithm_single}\\
 \hline
\end{tabular}
\caption{Algorithms used for the Single Component Simulations}
\label{table:algorithms_single}
\end{table}
\begin{figure}
    \centering
    \includegraphics[width=1.\linewidth]{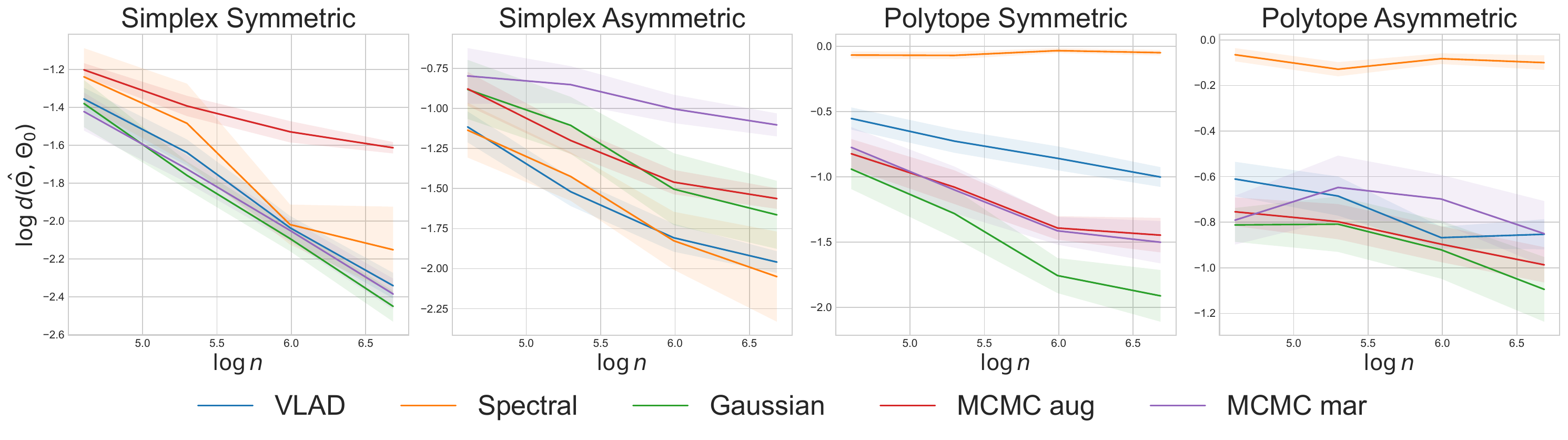}
    \caption{Simulation Results in a Single Component Setting}
    \label{fig:single_component_result}
\end{figure}

\subsubsection*{Single Component}
The first set of simulations deals with the case of a single component $K=1$, using the various algorithms discussed above. We set $D=20$ for these simulations and noise level $\sigma=0.4$. For the ground-truth component, we consider two cases - $d=3$ with affinely independent  vertices (hence the component polytope is a simplex) and $d=4$ with non-affinely independent vertices (these are chosen such that the component polytope is a 2-dimensional object with the 4 vertices exposed, i.e., a convex quadrilateral). In each of these cases, we consider the underlying mixing distribution $p_{\beta}$ to be either a symmetric Dirichlet distribution (with $\alpha=0.8$ in the simplex case and $\alpha=0.5$ in the polytope case) or an asymmetric Dirichlet distribution (with $\alpha=(0.2, 0.6, 1.0)$ in the simplex case and $\alpha=(0.3, 0.4, 1.0, 0.6)$ in the polytope case).  These settings are shown in Figure \ref{fig:single_component_settings} in the Appendix, along with the estimated polytopes using each of the methods in a single experiment with sample size 400. We use the five algorithms discussed above (shown in Table \ref{table:algorithms_single}) and evaluate the performance based on 20 repeated experiments for each setting for $n\in\{100,200,400,800\}$ -- the performance is measured in terms of estimating the ground-truth polytope using $d(\hat{\Theta},\Theta_0)$ where $d=d_M$ as in Equation \eqref{eq:bayes_metric}. Figure \ref{fig:single_component_result} shows the results from these experiments. We note that for the polytope case, the spectral algorithm does not work well, this is due to the fact that it requires the vertices to be linearly independent. Apart from that, the fast methods VLAD and spectral algorithm work very well in the simplex case. In the polytope case, the MCMC algorithms perform better (note that all other algorithms use some information about the underlying $\alpha$, which both the MCMC algorithms treat this as unknown). The Gaussian approximation algorithm seems to perform well in all the cases. It is worth noting that the slopes of the plots in the simplex case  are close  to the theoretical parametric estimation rates, while for the polytope case, they are slightly slower - this is perhaps attributable to the asymmetric Dirichlet placing much lower mass around one of the vertices (see setting in  Figure \ref{fig:single_component_settings}).

\subsubsection*{Multiple Components}
\begin{table}
    \centering
\begin{tabular}{ p{3cm} p{8cm} p{2cm}  }
 \hline
 Name & Brief description  & Reference\\
 \hline
 Geometric (Geom)   & MPPCA + VLAD &   \ref{subsec:geometric_multiple_components}\\
 Gaussian (Gaussian) &  Approximate $p_{\beta}$ by Gaussian with EM framework & \ref{subsec:gaussian_approx_multiple_components}\\
 MCMC (MCMC) &Grouped Independent MH Sampler marginalizing $\beta$ &  \ref{subsec:mcmc_multiple_components}\\
 EM $M$  &EM for Approximate Model using $M$ fixed draws of $\beta$ &  \ref{subsec:multiple_components}\\
 \hline
\end{tabular}
\caption{Algorithms used for the Multiple Component Simulations}
\label{table:algorithms}
\end{table}

\begin{figure}
    \centering
    \includegraphics[width=0.9\linewidth]{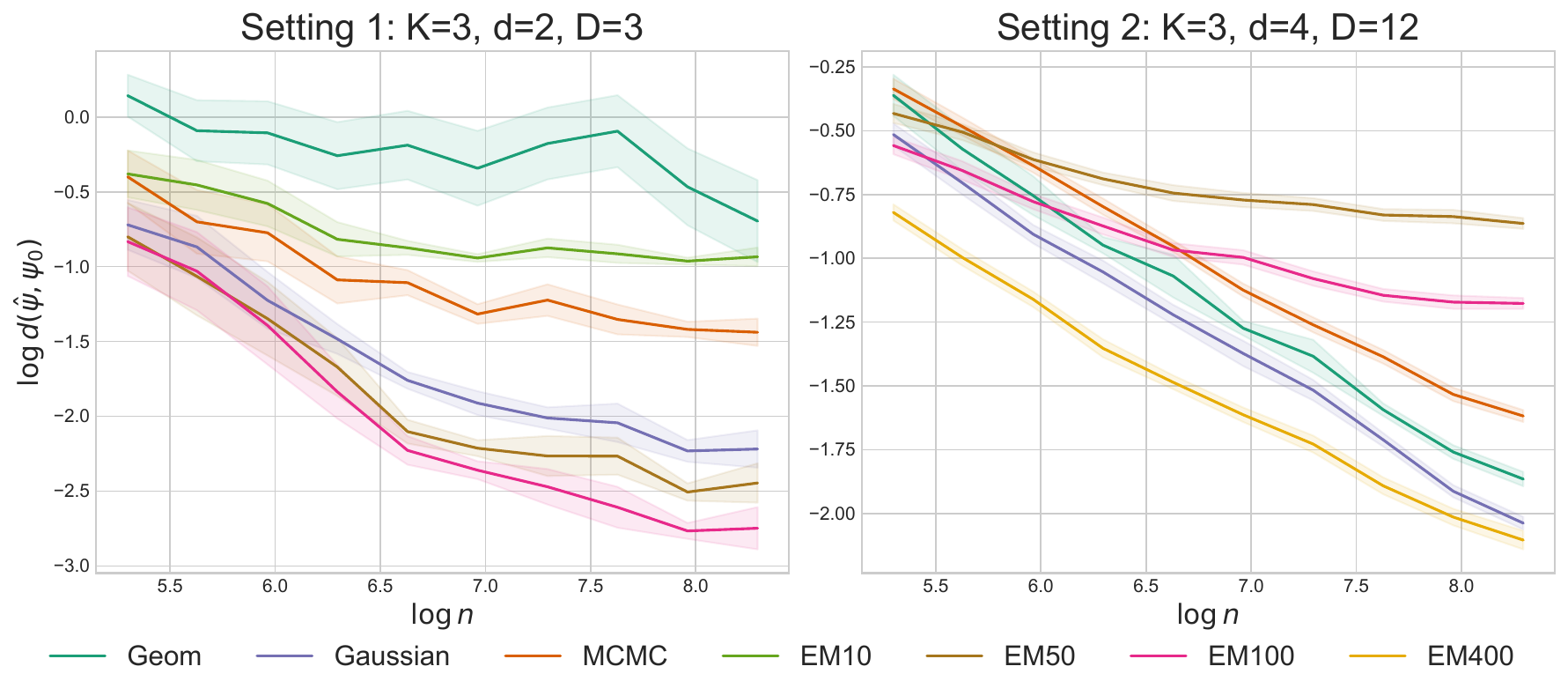}
    \caption{Simulation Results in General Setting}
    \label{fig:multiple_components_results}
\end{figure}

In our next set of simulations, we consider the general $K$ case with multiple components. We consider two settings. In each case, we consider 10 values of $n$ equally spaced (in log scale) between 200 and 4000 and consider 50 repeated experiments for setting, using 5 types of algorithms as discussed in the previous section, tabulated below. The performance of the algorithms is measured in terms of the metric $d$ defined in Equation \eqref{eq:bayes_metric}

In Setting 1, we set $K=3, d=2, D=3$ where each component is a line-segment in three-dimensions. However, the ground-truth components are chosen in a way such that there is intersection between every pair of them (see left Figure \ref{fig:multiple_components_setting}). The mixture probabilities are $0.3, 0.3, 0.4$ (roughly equal weights), with the component noise levels $\sigma_1=0.12, \sigma_2=0.2, \sigma_3=0.07$ differing across the components. The underlying mixing distribution is chosen as a symmetric Dirichlet distribution with $\alpha=1.0$.

In Setting 2, we set $K=3, D=12, d=4$ and generated the ground-truth by drawing the vertices randomly from a Gaussian distribution in $\bbR^{12}$. This setting, shown in Figure \ref{fig:multiple_components_setting} (right), involves two components having intersection, while the third component is separated from these. The mixture probabilities are generated from a symmetric Dirichlet distribution with high concentration to ensure all components have fairly high mass, with the component noise levels $\sigma_1\approx 0.1, \sigma_2\approx 0.2, \sigma_3\approx 0.6$ also generated randomly. The underlying mixing distribution is chosen as a symmetric Dirichlet distribution with $\alpha=0.75$.

The results from the experiments are shown in Figure \ref{fig:multiple_components_results} as $\log d$ versus $\log n$ plots (the lines represent the mean performance over 50 repetitions and the confidence bands illustrated as color bands). Firstly, we note that the EM algorithm for the approximate model (with sufficiently high $M$) performs the best in both settings. However, we note that with smaller values of $M$, the performance worsens as sample size increases. For Setting 2, most of the algorithms work pretty well and the convergence rate matches the theoretical parametric rate. For Setting 1, the situation is more complex, owing to several local modes in the log likelihood, with algorithms getting stuck at some of these modes (see right Figure \ref{fig:setting3_local_mode} in the Appendix). This issue has already been identified and studied in regards to Mixtures of Factor Analyzers \cite{ueda2000smem, ghahramani1999variational} with solutions involving a split-merge mechanism. We do not consider such split-merge, rather run each algorithm with multiple initializations and choose the best based on log likelihood. For comparison, the Geometric algorithm (Geom) is run using only a single initialization. We notice the substantially worse performance of this in Setting 1 (compared to the others with multiple initializations) illustrating this effect of local optima. However, in Setting 2, a single initialization works almost comparably -- this can be attributed to the fact that in higher (ambient) dimensions, the components are intuitively more separated and as such, a suitably chosen initialization most likely eliminates this issue. The MCMC algorithm does not perform well (at least in Setting 1) and has a very low acceptance rate -- this is explained by the random walk proposal, which is not optimal particularly in high dimensions, where the components are low-dimensional. This can be improved by considering proposals guided by gradients (Langevin-type proposals), which we defer to future work. We also consider another larger setting (Setting 3, given in the Appendix) with $K=10, D=200, d=25$, and the result is shown in ref, where we only used the Gaussian algorithm and the result demonstrates a parametric estimation rate again.

\subsubsection*{Model Selection}
\begin{figure}
    \centering
    \includegraphics[width=0.6\linewidth]{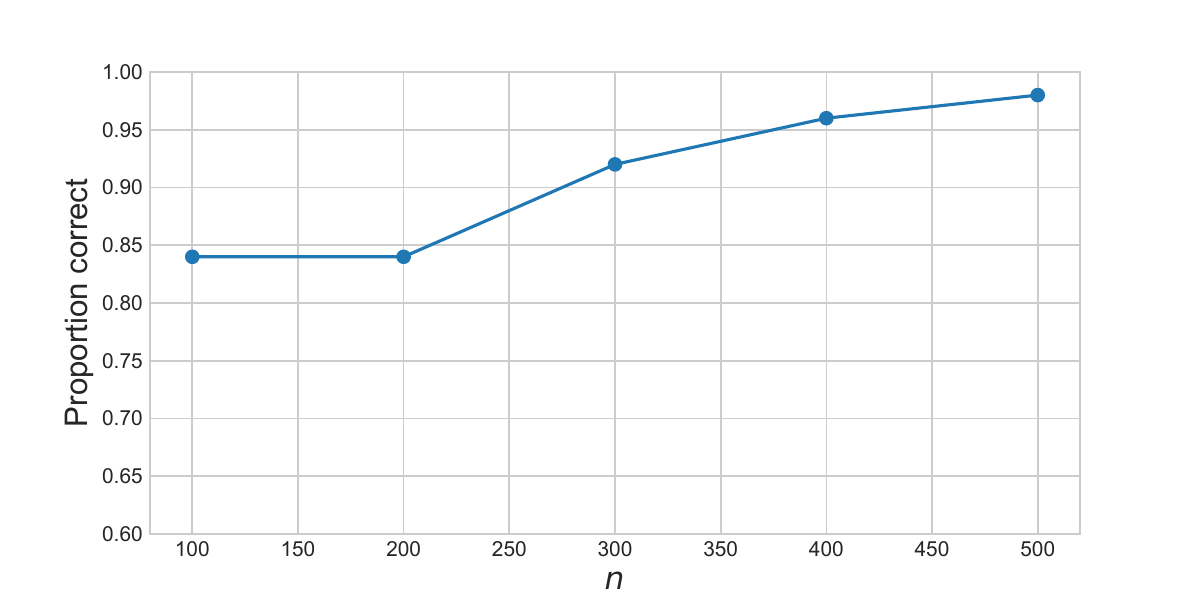}
    \caption{Results of Model Selection using BIC for Setting 2}
    \label{fig:model_selection}
\end{figure}
In the next set of simulations, we consider the task of model selection to shed light on the identifiability results in Section \ref{sec:identifiability}. We use the same setting as Setting 2 in the previous section. For each $n\in\{100,200,300,400,500\}$, we fit the model using a grid of 25 values for $K$ and $d$ (recall the true $K_*=3, d_*=4$), taking all combinations of $K\in\{K_*-2,K_*-1,K_*,K_*+1, K_*+2\}$ and similarly for $d$. In these experiments, we use the EM algorithm for the approximate model with $M=200$. We pick the model based on the lowest Bayesian Information Criteria (BIC) \cite{schwarz1978estimating} and check if it matches the ground-truth. Note that while computing BIC, the log likelihood of the model is required, which is approximated via a Monte Carlo (for the intractable integral). For each $n$, this experiment is repeated 50 times and the proportion of times BIC chooses the true model is computed. The result is shown in Figure \ref{fig:model_selection}, where we see that it indeed captures the true $K,d$.

Apart from these experiments, we also perform simulation studies to understand the performance in the case where the underlying $p_{\beta}$ is mis-specified. Interestingly, the results show that the model captures the components when $p_{\beta}$ is Dirichlet and $\alpha$ is mis-specified and even in the case, where the underlying $p_{\beta}$ is Gaussian but we fit with a Dirichlet. These results are provided in the Appendix.

\section{Conclusion}
In this paper, we consider the mixture of effectively low-dimensional convolutional measure components roughly supported on multiple affine subspaces.  In the first part of the paper, we consider a semi-parametric model where we demonstrate identifiability in very general setting, exploiting the low-dimensional nature of the latent mixing measure. Subsequently, we focus on a particular parametrization and discuss connections of our model to several other important latent variable models in the literature. We establish parametric estimation rate for such class of models by developing a novel inverse bound extending beyond finite mixtures, by again exploiting the geometric nature of such models. We believe the methods presented here deepen our understanding of latent variable models, going beyond the commonly analyzed Gaussian mixture model and can be applied more generally in situations involving continuous latent variables. We also conduct thorough simulation studies to support our theoretical findings, and in the process, develop new algorithmic tools to deal with such models.

The paper opens up a lot of avenues for future research. From a methodological perspective, the identifiability results in this paper open up the possibility of exploring non-parametric mixtures under low-dimensional latent measure structure, which can be implemented by placing a non-parametric prior (e.g., Dirichlet process mixture) on $p_{\beta}$. Theoretically, the next step involves understanding the estimation rate when the mixing distribution $p_{\beta}$ is unknown. In terms of algorithm, there are many directions one can pursue. In this paper, we discussed several algorithms but more effort is required to understand these deeply and make them scalable for large datasets. In particular, the EM algorithm for the approximate model shows potential to be useful for other cases, and can be improved by understanding how well the approximation works, choosing $M$ efficiently and also implementing the EM via mini-batches for large datasets. For the MCMC algorithm, as discussed, a better candidate could be Langevin or Hamiltonian MCMC, although care needs to be taken to accommodate the intractable likelihood (and hence the gradients), a feature quite common in recent Bayesian models.

\bibliographystyle{imsart-nameyear} 
\bibliography{ref}   

\newpage
\begin{appendix}

\section{Additional Details for Section \ref{sec:model}}

\begin{lemma}
    Let $G$ be a probability measure with support $\cS\subset\bbR^D$ such that $G$ is absolutely continuous with respect to the $d-$dimensional Hausdorff measure $\cH_d$ on $\cA=\aff \cS$, with $\dim \cA = d$. Then $\cS$ has Hausdorff dimension $d$.
\end{lemma}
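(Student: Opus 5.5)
The plan is to prove two inequalities, $\dim_H \cS \le d$ and $\dim_H \cS \ge d$.

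\textbf{Upper bound.} By definition of the support, $\cS$ is the smallest closed set of full $G$-measure. Since $\cA = \aff \cS$ is closed, $G(\cA)=1$ forces $\cS\subset \cA$. The affine space $\cA$ is an isometric copy of $\bbR^d$, so $\dim_H \cA = d$. Monotonicity of Hausdorff dimension then gives $\dim_H \cS \le d$.

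\textbf{Lower bound.} Here I would use the absolute continuity hypothesis. Restrict $\cH_d$ to $\cA$; after an isometry $\cA\cong\bbR^d$, this restriction is a constant multiple of $\cL_d$ on $\bbR^d$. Suppose for contradiction that $\dim_H \cS = s < d$. Then the $d$-dimensional Hausdorff measure of $\cS$ vanishes, since $\cH_t(E)=0$ for every $t>\dim_H E$. So $\cH_d(\cS)=0$, and by $G\ll \cH_d|_{\cA}$ we get $G(\cS)=0$. This contradicts $G(\cS)=1$. Hence $\dim_H \cS \ge d$.

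The same argument shows a little more: $\cS$ has positive $d$-dimensional Lebesgue measure inside $\cA$. This is consistent with Example \ref{example:1}(iii), where $\cS$ has empty relative interior but positive measure.

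\textbf{Main obstacle.} There is no real difficulty; the only step needing care is the measure-theoretic bookkeeping. Specifically, one must check that:
\begin{itemize}
\item the phrase ``$G\ll\cH_d$ restricted to $\cA$'' means $\cH_d(E\cap\cA)=0 \Rightarrow G(E)=0$;
\item the fact $\cH_t(E)=0$ for $t>\dim_H E$ (the definition of Hausdorff dimension as the critical exponent) is applied with $t=d$.
\end{itemize}
Finiteness of $G$ and the support facts are immediate, so once these identifications are stated the two inequalities finish the proof.
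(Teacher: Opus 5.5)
Your proof is correct. The upper bound matches the paper's. You also don't need to route it through $G(\cA)=1$ and minimality of the support, since $\cS\subset\aff\cS=\cA$ holds by definition of the affine hull.

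The lower bound takes a more elementary route than the paper:
\begin{itemize}
\item The paper writes $G$ via a Radon--Nikodym density $f$ with respect to $\cH_d$ on $\cA$ and sets $E=\{f>0\}$. It notes $\cH_d(E)>0$ and argues $\cH_d(E\setminus\cS)=0$ using a neighborhood of each point outside $\cS$. That step tacitly needs a countable subcover, or more simply the observation $G(\cS^c)=0$. It then concludes $\cH_d(\cS)\ge\cH_d(E)>0$.
\item You apply the definition of absolute continuity directly to the set $\cS$: since $G(\cS)=1>0$, necessarily $\cH_d(\cS)>0$, hence $\dim_H\cS\ge d$.
\end{itemize}
Both arguments reach the same intermediate fact, $\cH_d(\cS)>0$. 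Yours needs neither the density nor the bookkeeping about $E\setminus\cS$. The paper's detour only yields the extra observation that $\cS$ contains the positivity set of the density up to an $\cH_d$-null set, which the statement does not require.
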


\begin{proof}
   Since $\cS \subset \cA$ and $\dim(\cA)=d$, it follows that $\dim_H(\cS) \le d$, where $\dim_H$ is the Hausdorff dimension. By assumption, $G \ll \mathcal{H}_d |_{\cA}$, so there exists a Borel measurable function $f \ge 0$ on $\cA$ (density) such that
\[
G(B) = \int_{B \cap \cA} f \, d\mathcal{H}_d
\quad \text{for all Borel sets } B.
\]
Since $G$ is a probability measure,
$\int_{\cA} f \, d\mathcal{H}_d = 1$. Define the set $E := \{ x \in \cA : f(x) > 0 \}$. 
Then $\mathcal{H}_d(E) > 0$, since otherwise the above integral would be zero. Since any set with positive $\mathcal{H}^d$-measure has Hausdorff dimension at least $d$, hence $\dim_H(E) \ge d$.
Since $E \subset \cA$ and $\dim(\cA)=d$, we conclude that $\dim_H(E) = d$.

Next, we relate $E$ to the support $\cS = \supp(\mu)$. We claim that
\[
\mathcal{H}_d(E \setminus \cS) = 0.
\]
Indeed, if $x \notin \cS$, then there exists an open neighborhood $U$ of $x$ such that $G(U)=0$. Hence
\[
0 = G(U) = \int_U f \, d\mathcal{H}_d,
\]
which implies that $f=0$ $\mathcal{H}_d$-almost everywhere on $U$. Therefore $x \notin E$, up to a $\mathcal{H}_d$-null set, proving the claim.

It follows that $\mathcal{H}_d(\cS) \ge \mathcal{H}_d(E) > 0$, and hence
$\dim_H(\cS) \ge d$. Combining the bounds, we get the result.
\end{proof}

\subsection{Related Models}
\label{app:related models}
\subsubsection*{Mixed-Membership Mixtures like Topic Models}
Another interesting connection of this work is with mixed-membership mixture models, with topic models such as Latent Dirichlet Allocation \cite{blei2012probabilistic} being a prime example. In such models, it is assumed that we have access to $m$  observations $X_i=(X_{1i},\dots,X_{im})$ from each of $i=1,\dots,n$ groups, however in contrast to mixture product models, observations within a group are not assumed independent. Rather, the following model is used
\begin{equation}
    \begin{aligned}
        \beta \sim p_{\beta}, \quad X | \beta, \Theta \sim \sum_{j=1}^d \beta_j f(\cdot|\theta_j).
    \end{aligned}
\end{equation}
Thus, conditionally each group follows a mixture model with kernel $f$. When $p_{\beta}$ is Dirichlet and $f$ is multinomial (with $\theta_1,\dots,\theta_d\in\Delta^{D-1}$), the model becomes the famous Latent Dirichlet Allocation (LDA) \cite{blei2003latent}.  Note that in this case, as $m\to\infty$, by Central Limit Theorem, the conditional distribution of the mean of group $i$, $\bar{X}_i$, can be approximated by a Gaussian distribution $\bar{X}_i |\beta \simeq \cN(\Theta\beta, \Sigma(\beta)/m)$, where $(\Sigma(\beta))_{ij} = -\beta_i\beta_j$ if $i\neq j$ and $(\Sigma(\beta))_{ii} = \beta_i(1-\beta_i)$, which is a probability measure supported on $\cS=\conv(\theta_1,\dots,\theta_d)$ - this geometric connection has been exploited in \cite{nguyen2015posterior}. For large $m$, when the covariance is small, this can be approximated as $\bar{X}_i \sim \cN(\Theta\beta, \sigma^2 I)$ for small $\sigma^2$. If we only observe the group means (and not the individual observations), then $\beta\sim p_{\beta}, X|\beta,\Theta\sim \cN(\Theta\beta,\sigma^2 I_D)$ is a reasonable model, which is precisely the component distribution for our parametrization in \eqref{model:parametrization}. The mixture layer on top is to capture further heterogeneity in the data, which arises from hierarchical extensions of the topic model, e.g. \cite{griffiths2003hierarchical}, \cite{chakraborty2024learning}.

\subsubsection*{Non-Probabilistic Subspace Clustering}
The problem of subspace clustering is to cluster data lying on a union of low-dimensional subspaces. Such methods find numerous applications in image processing \cite{hong2006multiscale}, computer vision, e.g. image segmentation \cite{yang2008unsupervised}, motion segmentation and temporal video segmentation \cite{kanatani2001motion}. See \cite{parsons2004subspace},\cite{elhamifar2013sparse} for a brief review. Since data in a subspace are often distributed not elliptically around a centroid, it is important to generalize beyond methods taking advantage of spatial proximity only. There are various subspace clustering algorithms (MPPCA and MFA can be considered probabilistic models for this problem) -- iterative (such as K-subspaces \cite{tseng2000nearest} and median K-flats \cite{zhang2009median}), algebraic (such as Generalized PCA \cite{vidal2005generalized}) and spectral clustering-based methods, of which sparse subspace clustering (SSC) is an important example. SSC tries to reconstruct each observation as a combination of other points in the dataset, $x_i = \sum_{j\neq i} c_j x_j$ and posits sparsity constraints on $C$ (the matrix of the coefficients) and finally, uses spectral clustering applied to a similarity matrix obtained from $C$ \cite{elhamifar2013sparse}. An alternate way of viewing the model \eqref{model:parametrization} is that we are trying to obtain a sparse representation of the observations $x \approx \sum_k 1(z=k)\sum_j \beta_{kj} \theta_{kj}$
where each $\beta_k=(\beta_{k1},\dots,\beta_{kd})\in\Delta^{d-1}$ (convex representation not in terms of other data points, but a small number of end-point parameters) and the indicator variable ensures that only one of the $K$ components is active (sparsity). 

The use of convex polytopes as the supports for the component distributions on different subspaces provides a more interpretable representation of the clusters. Such ideas are explored recently \cite{lawless2022interpretable}, \cite{lawless2023cluster}. Thus, from a clustering point of view, the parametrized model considered in this work can be considered as a probabilistic model for the subspace clustering problem, going beyond elliptic component distributions and providing interpretable clustering results. 

\subsubsection*{Other connections}
The individual component distributions are also connected to methods in spectral unmixing, where the problem is to reconstruct a signal as a weighted sum of some end-member representative signals. There is extensive literature on this problem and notable algorithms include Non-negative Matrix Factorization methods (NMF) (specifically sparse NMF and convex NMF \cite{ding2008convex}) and archetypal analysis \cite{cutler1994archetypal}, \cite{javadi2020nonnegative}. These methods also seek for a representation $x\approx \sum_j \beta_j \theta_j$, or equivalently, a factorization of the form $X=B\Theta^\top$, where $X$ is the data matrix, $B$ is the matrix containing the weights (convex coefficients) and $\Theta$ is the $D\times d$ matrix containing the archetypes as the columns. In our case, we use a probabilistic version of this, where the rows of $B$ are considered latent variables and the mixing $p_{\beta}$ controls the distribution of the observations within the convex hull of the archetypes. In fact, this model is similar to the Dirichlet Simplex Nest considered in \cite{yurochkin2019dirichlet}, where the focus was on a scalable geometric algorithm in the case $p_{\beta}$ is a symmetric Dirichlet distribution. The current model extends such models by adding another layer of hierarchy (leading to the mixture structure).

\section{Proofs in Section \ref{sec:identifiability}}

\subsection{Proof of Proposition \ref{prop:noiseless_identifiability}}
\label{app:proof_prop:noiseless_identifiability}

We need the following simple lemma, whose proof is obvious and hence omitted.
\begin{lemma}
Suppose $\cA,\cA'$ are affine spaces and $\cS$ is low-dimensional.
\begin{enumerate}
    \item $\cS\cap\cA$ is either empty or another low-dimensional set.
    \item $\cA\cap\cA'$ is either empty or another affine space.
    \item If $\dim \cA=\dim\cA'$, then $\cA\cap\cA'$ either has strictly lower dimension or $\cA=\cA'$.
    \item If $\dim\cA<\dim\cA'$, then either $\cA\cap\cA'$ has dimension strictly less than $\dim\cA$ or $\cA\subset \cA'$.
\end{enumerate}
\end{lemma}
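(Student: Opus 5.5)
The lemma is elementary, and I will prove each of its four parts directly from linear algebra. I write an affine space as $\cA = a + V$ with $V$ a linear subspace and $\dim\cA := \dim V$. Throughout, a ``low-dimensional'' set means a set whose affine hull has dimension strictly less than $D$.

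For part 1, suppose $\cS\cap\cA\neq\emptyset$. Since $\cS\cap\cA\subset\cS\subset\aff\cS$, we get $\aff(\cS\cap\cA)\subset\aff\cS$, so $\dim\aff(\cS\cap\cA)\le\dim\aff\cS<D$. Hence $\cS\cap\cA$ is low-dimensional. The same monotonicity of affine hulls under inclusion is the only ingredient needed here.

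For part 2, suppose $\cA\cap\cA'\neq\emptyset$ and pick a point $x_0$ in it. Then $\cA=x_0+V$ and $\cA'=x_0+V'$. A point $x$ lies in both exactly when $x-x_0\in V\cap V'$, so
\begin{equation*}
\cA\cap\cA'=x_0+(V\cap V').
\end{equation*}
This is an affine space whose direction space is $V\cap V'$.

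Parts 3 and 4 follow from this representation. If the intersection is nonempty, $\dim(\cA\cap\cA')=\dim(V\cap V')\le\min(\dim V,\dim V')$.
\begin{itemize}
\item In part 3, equality $\dim(V\cap V')=\dim V=\dim V'$ forces $V\cap V'=V=V'$, because a subspace contained in another of the same finite dimension equals it. Then $\cA=x_0+V=\cA'$. Otherwise the intersection has strictly lower dimension, and if it is empty we adopt the convention that the empty set has dimension $-\infty$ (or $-1$), which is strictly lower.
\item In part 4, if $\dim(V\cap V')=\dim V$, then $V\subset V'$, and so $\cA=x_0+V\subset x_0+V'=\cA'$. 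Otherwise $\dim(\cA\cap\cA')<\dim\cA$.
\end{itemize}

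The only point needing care is the empty-intersection case and its dimension convention, which the statement already accommodates by saying ``either empty or''. I anticipate no real obstacle.
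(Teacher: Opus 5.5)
Your proof is correct and complete. The paper omits this proof as obvious, and your argument via $\cA\cap\cA' = x_0 + (V\cap V')$, together with the fact that nested subspaces of equal finite dimension coincide, is the standard reasoning it takes for granted (the phrase ``either empty or'' appears only in parts 1 and 2, so in parts 3 and 4 the empty intersection is covered by your explicit convention $\dim\emptyset=-1$, which suffices).
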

Now, we come to the proof of the above proposition.

\begin{proof}[Proof of Part 1]
Let $\cS_k = \supp(G_k)$ and $\cA_k = \aff \cS_k$, with $d_k = \dim \cS_k$. Define $\cS_{k'}'$ and $\cA_{k'}'$ analogously for $G_{k'}'$. By Assumption~\ref{assume:A}, the affine spaces $\{\cA_k\}_{k\in[K]}$ are pairwise distinct, and similarly for $\{\cA_{k'}'\}_{k'\in[K']}$. Recall that $G$ is continuously supported on $\cS=\supp(G)$ means that $G\ll \cH_d$ restricted to $\cA=\aff(\cS)$ with $d=\dim(\cA)$. Lastly, for a probability measure $G$ and $B\subset \bbR^D$ such that $G(B)>0$, restricted and normalized measure $G|_{B}$ is defined via $G|_{B}(A):=G(B\cap A)/G(B)$.

\medskip

\noindent
\textbf{Step 1: Equality of minimal dimensions.}
Let $d_{\min} = \min_k d_k$ and $d_{\min}' = \min_{k'} d_{k'}'$. Suppose $d_{\min} < d_{\min}'$. Let $i$ be such that $d_i = d_{\min}$. Then $P(\cS_i) \ge \pi_i > 0$. However, for every $k'$, since $d_{k'}' > d_{\min}$, the set $\cS_i \subset \cA_i$ lies in a strictly lower-dimensional affine space than $\cA_{k'}'$, so $G_{k'}'(\cS_i)=0$. Thus $P'(\cS_i)=0$, contradicting $P=P'$. Hence $d_{\min} = d_{\min}'$.

\medskip

\noindent
\textbf{Step 2: Identification of a minimal component.}
Fix $i$ such that $d_i = d_{\min}$ and let $\cA = \cA_i$. We claim that
\[
P|_{\cA} = G_i.
\]
Indeed, for any $k \neq i$:
\begin{itemize}
    \item If $d_k > d_{\min}$, then $\cA$ is strictly lower-dimensional than $\cA_k$, so $G_k(\cA)=0$.
    \item If $d_k = d_{\min}$, then by Assumption~\ref{assume:A}, $\cA_k \neq \cA$, so $\cA_k \cap \cA$ is strictly lower-dimensional, and hence $G_k(\cA)=0$.
\end{itemize}
Thus $P(\cA) = \pi_i$ and
\[
P|_{\cA} = \frac{\pi_i G_i}{\pi_i} = G_i.
\]

Applying the same argument to $P'$, there exists a unique $j$ such that $\cA_j' = \cA$. Uniqueness follows from Assumption \hyperref[assume:A]{A}. Similarly
\[
P'|_{\cA} = G_j'.
\]
Since $P=P'$, we conclude
\[
G_i = G_j', \qquad \pi_i = P(\cA) = P'(\cA) = \pi_j'.
\]

\medskip

\noindent
\textbf{Step 3: Inductive removal.}
Remove the matched components $(\pi_i, G_i)$ and $(\pi_j', G_j')$ from $P$ and $P'$ and renormalize. The remaining mixtures still satisfy Assumption~\ref{assume:A}. Repeating the above argument, we successively match and remove components.

Since both mixtures are finite and at each step one component is removed, the procedure terminates after finitely many steps. Therefore $K=K'$ and there exists a permutation $\tau$ of $[K]$ such that
\[
G_k = G_{\tau(k)}', \qquad \pi_k = \pi_{\tau(k)}'.
\]
\end{proof}

\begin{proof}[Proof of part 2]
    \noindent
    \textbf{Step 1: Lower bound of $K'$.} Let $d_{\min}=\min_k d_k$, and choose $i$ such that $d_i=d_{\min}$. Let $\cA=\cA_i$. As in the proof of part 1 above, we have $P|_{\cA}=G_i$ and $P(\cA)=\pi_i>0$. Since $P=P'$, it follows that $P'(\cA)=\pi_i>0$.

    We claim that there exists at least one $j$ such that $\cA_j'=\cA$. Indeed,suppose for contradiction that $\cA_j' \neq \cA$ for all $j$. Then for each $j$:
    \begin{itemize}
    \item either $\dim \cA_j' > d_{\min}$, in which case $\cA \subset \cA_i$ has strictly smaller dimension and hence $G_j'(\cA)=0$,
    \item or $\dim \cA_j' \le d_{\min}$ but $\cA_j' \neq \cA$, in which case $\cA_j' \cap \cA$ has strictly smaller dimension, again implying $G_j'(\cA)=0$.
\end{itemize}
Thus $P'(\cA)=0$, a contradiction. Hence there exists $j$ such that $\cA_j' = \cA$.

\medskip

\noindent
\textbf{Step 2: Identification of one component.}
Restricting to $\cA$, we obtain
\[
P|_{\cA} = G_i, \qquad P'|_{\cA} = \sum_{j: \cA_j' = \cA} \frac{\pi_j'}{P'(\cA)} G_j'.
\]
Since $P|_{\cA} = P'|_{\cA}$ and $P'(\cA)=\pi_i$, it follows that $G_i$ is a mixture of $\{G_j': \cA_j' = \cA\}$.

Since $G_i$ is supported on $\cA$ and absolutely continuous with respect to $\cH_{d_{\min}}$, this implies that there must be at least one such $j$ with $\pi_j' > 0$. In particular, at least one component of $P'$ is associated to $G_i$.
\medskip

\noindent
\textbf{Step 3: Iteration.}
Repeat the above argument for each component of $P_{\cG,\pi}$. Since the affine spaces $\{\cA_k\}_{k\in[K]}$ are distinct, the corresponding sets $\{\cA_k\}$ are disjoint up to lower-dimensional intersections, and each contributes positive mass under $P$. Thus, for each $k\in[K]$, there must exist at least one $j$ such that $\cA_j' = \cA_k$. Since distinct $\cA_k$'s cannot share the same $\cA_j'$, this implies that $K' \ge K$.

\medskip

\noindent
\textbf{Step 4: Equality case.}
If $K'=K$, then each $\cA_k$ must correspond to exactly one $\cA_j'$. Hence there exists a permutation $\tau$ of $[K]$ such that $\cA_k = \cA_{\tau(k)}'$. Restricting to each $\cA_k$ as in Part 1, we obtain
\[
G_k = G_{\tau(k)}', \qquad \pi_k = \pi_{\tau(k)}'.
\]

This completes the proof.
\end{proof}

\subsection{Proof of Lemma \ref{lemma:identify_lower_dim_abs_cont}}
\begin{proof}
    Let $P=pG + (1-p)F$ and $P'=p'G'+(1-p')F'$. Since each $\cS_k$ has dimension strictly less than $D$, it follows that $\cH_D(\cS_k)=0$ and hence, $\cL_D(\cS_k)=0$. Therefore, $\cS=\cup_k \cS_k$ satisfies $\cL_D(\cS)=0$ and thus $G$ is singular with respect to $\cL_D$. Similarly, $G'$ is singular with respect to $\cL_D$, while $F, F'$ are absolutely continuous with respect to $\cL_D$.

    Now, $P$ admits Lebesgue decomposition $P=P_{\text{sing}}+P_{\text{ac}}$, where $P_{\text{sing}}=pG$ and $P_{\text{ac}}=(1-p)F$ and similarly for $P'$. Since $P=P'$, by the uniqueness of Lebesgue decomposition with respect to $\cL_D$, we must have
    $$P_{\text{sing}}=P'_{\text{sing}}, \quad P_{\text{ac}}=P'_{\text{ac}}.$$
    Hence, $pG=p'G'$ and $(1-p)F=(1-p')F'$. If $p>0$, then $p'>0$. Since $G, G'$ are probability measures, by normalization from $pG=p'G'$, we get $p=p'$ and $G=G'$. Consequently, $(1-p)F=(1-p')F'$ implies $F=F'$. The case $p=0$ is trivial since it implies $p'=0$.
\end{proof}

\subsection{Proof of Theorem \ref{thm:isotropic_identifiability}}
\label{app:proof_thm:isotropic_identifiability}
\begin{proof}[Proof of part 1]
    Since $P_{\cG,\pi,\bphi} = P_{\cG',\pi',\bphi'}$, taking the characteristic function we get 
     $$\sum_{k\in[K]} \pi_k \varphi_{G_k}(t)\varphi_{\phi_k}(t) = \sum_{k\in[K']} \pi_k' \varphi_{G_k'}(t)\varphi_{\phi_k'}(t)\quad \forall\, t\in\bbR^D,$$
     where $\varphi_{G_k}$ is the characteristic function for $G_k$ and $\varphi_{\phi}(t)$ is the characteristic function of $q_{\phi}$. By assumption, $\varphi_{\phi_1}(t)/\varphi_{\phi_2}(t)=\psi_{\phi_1,\phi_2}(t)$ is a characteristic function of an absolutely continuous distribution on $\bbR^D$ for every $\phi_1>\phi_2$.
     
     Without loss of generality, suppose $\phi_1\leq \phi_2\leq\cdots\leq \phi_K$ and similarly $\phi_1'\leq \phi_2'\leq\cdots\leq \phi_{K'}'$. If $\phi_1<\phi_1'$, then dividing the above display by $\varphi_{\phi_1}(t)$, we obtain
     $$\sum_{k=1}^{\ell} \pi_k\varphi_{G_k}(t) + \sum_{k=\ell+1}^{K} \pi_k \varphi_{G_k}(t)\psi_{\phi_k, \phi_1}(t) = \sum_{k\in[K']} \pi_k' \varphi_{G_k'}(t)\psi_{\phi_k',\phi_1}(t)\quad \forall\, t\in\bbR^D$$
     where $\ell\in[K]$ is such that $\phi_k-\phi_1 > 0$ for all $k>\ell$, $\phi_k=\phi_1$ for $k\in[\ell]$ and by assumed ordering, $\phi_k'>\phi_1$ for all $k\in[K']$. Thus, the RHS of the above display is the characteristic function of a distribution in $\bbR^D$, which is absolutely continuous (every component is convolution with an absolutely continuous distribution), while the LHS has at least one part which is not absolutely continuous (the part corresponding to the first $\ell$ components). By the Lemma \ref{lemma:identify_lower_dim_abs_cont} and the fact that $\pi_k>0$ for all $k$, this implies that $\sum_{k\in[\ell]} \pi_k G_k$ is the zero measure, which is false. By changing roles, we can similarly disprove the case $\phi_1>\phi_1'$, which implies that $\phi_1=\phi_1'$.

     Now suppose $k=1,\dots,m$ of the components in $P$ have $\phi_1$ as the convolution kernel parameter while components $k'=1,\dots,m'$ of $P'$ also have $\phi_1$, while all other components have parameter strictly greater than $\phi_1$. Then, dividing by $\varphi_{\phi_1}$ as before, we get
     \begin{align}\label{eq:identifiability_cf}
         \sum_{k=1}^m \pi_k\varphi_{G_k}(t) + \sum_{k=m+1}^K \pi_k \varphi_{G_k}(t)\psi_{\phi_k,\phi_1}(t) = \sum_{k=1}^{m'} \pi_k'\varphi_{G_k'}(t) + \sum_{k=m'+1}^{K'} \pi_k' \varphi_{G_k'}(t)\psi_{\phi_k', \phi_1}(t)
     \end{align}
     which, transformed into space of measures, give
     \begin{align}\label{eq:identifiability_cf2}
         \left(\sum_{k=1}^m \pi_k\right)\sum_{k=1}^m \frac{\pi_k}{\sum_{k=1}^m \pi_k} G_k + \left(1-\sum_{k=1}^m \pi_k\right)F = \left(\sum_{k=1}^{m'} \pi_k'\right)\sum_{k=1}^{m'} \frac{\pi_k'}{\sum_{k=1}^{m'} \pi_k'}  G_k' + \left(1-\sum_{k=1}^{m'} \pi_k'\right)F'
     \end{align}
     where $F, F'$ are absolutely continuous measures. By assumption, all $\cS_k, \cS_k'$ have dimension less than $D$, hence by Lemma \ref{lemma:identify_lower_dim_abs_cont}, we know that $\sum_{k=1}^m \pi_k = \sum_{k=1}^{m'}\pi_k'$ and 
     $$\sum_{k=1}^m \tilde{\pi}_k G_k = \sum_{k=1}^{m'} \tilde{\pi}_k' G_k'$$
     where $\tilde{\pi}_k = \pi_k / \sum_{k=1}^m \pi_k$. Now, using Assumption \hyperref[assume:A]{A} and by our result in Proposition \ref{prop:noiseless_identifiability} (part 1), we have $m=m'$ and the components and the mixture probabilities match, up to a permutation. Finally, we also have (after eliminating these components and normalizing):
     $$\sum_{k=m+1}^K \tilde{\pi}_k \varphi_{G_k}(t)\varphi_{\phi_k}(t) = \sum_{k=m+1}^{K'} \tilde{\pi}_k' \varphi_{G_k'}(t)\varphi_{\phi_k'}(t)\quad \forall\, t\in\bbR^D$$
     where $\tilde{\pi}_k = \pi_k/(1 - \sum_{j\in[m]}\pi_j)$ and similarly for $\tilde{\pi}_k$. We can repeat the argument using the next minimum $\phi_k$. In each step, we take out equal number of components from both. Since we cannot be left out with any components on either while the other is empty, we conclude that $K=K'$. Finally, at each step, we prove for every $\phi\in \{\phi_1,\dots,\phi_K\}$, there exist the same number of components in both $P,P'$ with precisely $\phi$ as the scale parameter and these components match between $P$ and $P'$ up to a permutation. Thus, combining all these, we conclude that the proof is complete.
\end{proof}

\begin{proof}[Proof of part 2]
     As in the previous proof, let $\phi_{\min}=\min_{k\in[K]} \phi_k$, which has to match $\phi_{\min}'=\min_{k\in[K']} \phi_k'$ by the same argument as in the previous part. We group the components of $P_{\cG, \pi,\bphi}$ into (i) those with scale $\phi_{\min}$ indexed by $I$ and (ii) the rest. So, $I=\{k\in[K]: \phi_k=\phi_{\min}\}$ and $I^c=[K]\setminus I$. Similarly, define $I'$. Dividing the characteristic function identity with $\varphi_{\phi_{\min}}(t)$, we obtain the identity
    $$\sum_{k\in I} \pi_k \varphi_{G_k}(t) + \sum_{k\in I^c}\pi_k \varphi_{G_k}(t)\psi_{\phi_k,\phi_{\min}}(t) = \sum_{k\in I'} \pi_k' \varphi_{G_k'}(t) + \sum_{k\in (I')^c}\pi_k' \varphi_{G_k'}(t)\psi_{\phi_k',\phi_{\min}}(t).$$
    By Assumption \hyperref[assume:C]{C}, all the terms $\psi_{\phi_k,\phi_{\min}}$ correspond to convolution with an absolutely continuous distribution since for all $k\in I^c$, $\phi_k>\phi_{\min}$ and similarly for $(I')^c$. Hence, the decomposition splits into (i) finite mixture of low-dimensional measures, and (ii) an absolutely continuous component.

     By Lemma \ref{lemma:identify_lower_dim_abs_cont}, the singular (non absolutely continuous) parts of both sides must match. This gives
     $$\sum_{k\in I} \tilde{\pi}_k G_k = \sum_{k\in I'} \tilde{\pi}_k' G_k', \quad \tilde{\pi}_k,$$
     where $\sum_{k\in I} \pi_k = \sum_{k\in I'} \pi_k'$ (matching the total mass), and $\tilde{\pi}$ and $\tilde{\pi}'$ are the normalized weights. The equality in the above display is exactly a mixture of low-dimensional measures without noise. Hence Proposition \ref{prop:noiseless_identifiability} (part 2) applies under Assumption \hyperref[assume:A]{A}, yielding that $|I|\leq |I'|$ and if equality holds (i.e., $|I|=|I'|$), then up to a permutation $\tau$, $(\pi_k, G_k)_{k\in I} = (\pi_{\tau(k)}', G_{\tau(k)}')_{k\in I}$. 

     Once we match these components of $I$ with those of $I'$, remove these from both mixtures and renormalize the remaining parts. The residual model still satisfies the same structural assumptions, with the minimum scale now strictly larger than $\phi_{\min}$. We repeat the argument inductively. Since the mixture $P_{\cG, \pi, \bphi}$ has finitely many components, the process terminates in finitely many steps. 

     Combining all the above arguments, we have shown that for each noise level, the components with this noise either match (up to a permutation) or $P_{\cG',\pi',\bphi'}$ must have more number of components. This proves that $K'\geq K$, and in the case $K=K'$, the equality has to be true for each noise level in the inductive argument, resulting in a single permutation $\tau$ such that $(\pi_k, G_k, \phi_k) = (\pi_{\tau(k)}', G_{\tau(k)}', \phi_{\tau(k)}')$ for all $k$.
\end{proof}

\begin{remark}\label{remark: laplace}
    A note for Laplace distribution for the noise model, i.e., $q_{\phi}=\text{Laplace}(\boldsymbol{0}, \phi I)$ for $\phi>0$. We know that the characteristic function of the Laplace distribution is $\varphi_{\phi}=1/(1+\phi\norm{t}_2^2/2)$. This gives
    $$\psi_{\phi_1,\phi_2}(t)=\frac{\varphi_{\phi_1}(t)}{\varphi_{\phi_2}(t)} = \frac{1 + \frac{1}{2}\phi_2\norm{t}_2^2}{1 + \frac{1}{2}\phi_1\norm{t}_2^2} =\frac{\phi_2}{\phi_1} + \frac{1 - \phi_2/\phi_1}{1+\frac{1}{2}\phi_1\norm{t}_2^2}.$$
    Note we are interested in the case $\phi_1>\phi_2$. Taking the inverse Fourier transform, we get the distribution whose characteristic function is $\psi_{\phi_1,\phi_2}(t)$, given by
    $$P_{\phi_1,\phi_2}:=\frac{\phi_2}{\phi_1} \delta_{\boldsymbol{0}} + \left(1- \frac{\phi_2}{\phi_1}\right)\text{Laplace}(\boldsymbol{0}, \phi_1 I_D)$$
    which is a mixture of a discrete (point mass at $\boldsymbol{0}$) and Laplace distribution. This does not satisfy our assumption of $\varphi_{\phi_1}/\varphi_{\phi_2}$ being the c.f. of an absolutely continuous distribution on $\bbR^D$ -- however, this characterization can also be used with our proof technique under slight modification. Note that $\varphi_{G_k}\psi_{\phi_k,\phi_1}$ (as appears in Equation \eqref{eq:identifiability_cf}) is the c.f. of $X+Y$, where $X\sim G_k$ and $Y\sim P_{\psi_k,\psi_1}$ independently. This is the probability distribution $\left(\phi_1/\phi_k\right) G_k + \left(1 - \phi_1/\phi_k\right) G_k\star \text{Laplace}(\boldsymbol{0}, \phi_1 I_D)$
    which turns the left-hand side of Equation \eqref{eq:identifiability_cf2} into the following
    $$\underbrace{\sum_{k=1}^m \pi_kG_k + \sum_{k=m+1}^K \frac{\pi_k \phi_1}{\phi_k}G_k}_{\text{low-dim}} + \underbrace{\sum_{k=m+1}^K \pi_k\left(1-\frac{\phi_1}{\phi_k}\right)G_k\star \text{Laplace}(\boldsymbol{0}, \phi_1 I_D)}_{\text{abs. cont.}}$$
    and similarly for the right-hand side. Applying Lemma \ref{lemma:identify_lower_dim_abs_cont} gives
    \begin{align*}
    \sum_{k=1}^m \pi_kG_k + \sum_{k=m+1}^K \frac{\pi_k \phi_1}{\phi_k}G_k &= \sum_{k=1}^{K'} \frac{\pi_k'\phi_1}{\phi_{k}'}G_k' &\text{low-dim part}\\
    \sum_{k=m+1}^K  \pi_k\left(1-\frac{\phi_1}{\phi_k}\right)G_k\star \text{Laplace}(\boldsymbol{0}, \phi_1 I_D) &=\sum_{k=1}^{K'}  \pi_k'\left(1-\frac{\phi_1}{\phi_k'}\right)G_k'\star \text{Laplace}(\boldsymbol{0}, \phi_1 I_D) &\text{cont. part}
    \end{align*}
    The first equality of measures along with Proposition \ref{prop:noiseless_identifiability} shows that $K=K'$ and $G_1,\dots,G_K$ are same as $G_1',\dots,G_K'$ up to a permutation. Without loss of generality, say $G_k=G_k'$ for all $k$. Then, we also get
    $$\pi_k = \pi_k'\phi_1/\phi_k', k=1,\dots, m \text{ and } \pi_k/\phi_k = \pi_k'/\phi_k', k=m+1,\dots,K.$$
    The second equality, noting that each convolution is with the same Laplace distribution gives (convert to characteristic function and cancel off the part corresponding to this Laplace part and reverse back to distribution)
    $$\sum_{k=m+1}^K  \pi_k\left(1-\frac{\phi_1}{\phi_k}\right)G_k =\sum_{k=1}^{K'}  \pi_k'\left(1-\frac{\phi_1}{\phi_k'}\right)G_k'$$
    which is not possible (left-side has fewer components), unless $\phi_1'=\phi_1$. In that case, there must be exactly $m$ components with $\phi_1'=\dots=\phi_m'=\phi_1$. For these components, we immediately obtain $G_k=G_k'$ and $\pi_k=\pi_k'$. Then, we remove this component and proceed similar to the proof of the theorem.
\end{remark}

\subsection{Additional result with Assumption \hyperref[assume:B]{B}}
\label{app: relax assumption A}

In this section, we show that similar conclusion can be reached by using Assumption \hyperref[assume:B]{B} in place of Assumption \hyperref[assume:A]{A}. In particular, we allow two components to have supports with the same affine hulls, subject that their supports are disjoint, and each support is a connected set. 
\begin{theorem}\label{thm:isotropic_identifiability b}
Suppose $P_{\cG,\pi,\bphi} = P_{\cG',\pi',\bphi'}$ with $K, K'$ components respectively where $\pi\in\intr (\Delta^{K-1})$ and $ \pi'\in\intr (\Delta^{K'-1})$, supports of measures in $\cG$ (and also those in $\cG'$) satisfy assumption \hyperref[assume:B]{B} and have dimensions strictly lower than $D$. Assume the noise kernel family $\cQ$ satisfies Assumption \hyperref[assume:C]{C}. Then we have $K=K'$, and all parameters match up to a permutation of labels.
\end{theorem}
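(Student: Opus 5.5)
The proof follows the proof of Theorem \ref{thm:isotropic_identifiability} (part 1) essentially verbatim. The only new ingredient is a noiseless identifiability statement under Assumption \hyperref[assume:B]{B}, i.e. the analogue of Proposition \ref{prop:noiseless_identifiability} (part 1), which plays the role Proposition \ref{prop:noiseless_identifiability} played there. So I would first establish that noiseless result and then run the deconvolution peeling argument unchanged.

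\textbf{Deconvolution step.} Order the noise levels and take characteristic functions. First show $\min_k\phi_k=\min_k\phi_k'=:\phi_*$: if, say, $\phi_1<\phi_1'$, divide by $\varphi_{Q_{\phi_1}}$. The right side then becomes the characteristic function of an absolutely continuous measure (Assumption \hyperref[assume:C]{C}, item 3), while the left side contains the nonzero singular part $\sum_{k:\phi_k=\phi_1}\pi_kG_k$. This contradicts Lemma \ref{lemma:identify_lower_dim_abs_cont}, since every $\cS_k$ has dimension $<D$. Next divide by $\varphi_{Q_{\phi_*}}$ and apply Lemma \ref{lemma:identify_lower_dim_abs_cont} to obtain two facts. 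The total masses agree, $\sum_{k\in I}\pi_k=\sum_{k\in I'}\pi_k'$. The normalized noiseless mixtures also agree, $\sum_{k\in I}\tilde\pi_kG_k=\sum_{k\in I'}\tilde\pi_k'G_k'$. Here $I$ and $I'$ index the components with $\phi=\phi_*$. Each subcollection still satisfies Assumption \hyperref[assume:B]{B}, since Assumption \hyperref[assume:B]{B} is a pairwise condition.

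\textbf{Noiseless result under Assumption \hyperref[assume:B]{B}.} Suppose $P=\sum_k\pi_kG_k=\sum_j\pi_j'G_j'=P'$, with both collections satisfying Assumption \hyperref[assume:B]{B}. I would proceed as follows.
\begin{enumerate}
\item Group the components by affine hull. Running Steps 1--2 of the proof of Proposition \ref{prop:noiseless_identifiability} (part 1) with a minimal-dimensional affine space $\cA$ gives $P|_{\cA}=\sum_{k:\cA_k=\cA}\pi_kG_k/P(\cA)$, and likewise for $P'$. The same dimension-counting shows that $P$ and $P'$ charge exactly the same minimal-dimensional affine spaces.
\item Identify components within one affine space $\cA$. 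All components with hull $\cA$ have pairwise disjoint connected supports $\cS_k$, so $\supp(P|_{\cA})$ is the union $\bigcup_k\cS_k$. This is a finite union of disjoint closed connected sets, hence pairwise at positive distance locally (compact case) or at least separated. Therefore its connected components are exactly the $\cS_k$.
\item Doing the same on the primed side, the connected components of the common support coincide. This matches each $\cS_k$ with a unique $\cS'_{\tau(k)}$.
\item Restricting $P|_{\cA}$ to that connected component yields $\pi_k=\pi'_{\tau(k)}$ and $G_k=G'_{\tau(k)}$.
\item Remove these components, renormalize, and induct on dimension, as in Step 3 of that proof.
\end{enumerate}

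\textbf{Main obstacle.} The delicate point is the claim in item 2 that the connected components of $\bigcup_k\cS_k$ are exactly the $\cS_k$. A finite disjoint union of closed connected sets has these sets as its connected components: each $\cS_k$ is clopen in the union, because its complement in the union is a finite union of closed sets. So this works without compactness. I would state this explicitly, using the fact that supports are closed.

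\textbf{Conclusion.} Given this noiseless result, $|I|=|I'|$ and the components with $\phi=\phi_*$ match up to a permutation. Remove them, renormalize, and repeat with the next smallest noise level. Finiteness guarantees termination, giving $K=K'$ and a single permutation $\tau$ with $(\pi_k,G_k,\phi_k)=(\pi'_{\tau(k)},G'_{\tau(k)},\phi'_{\tau(k)})$.
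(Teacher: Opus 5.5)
Your proposal is correct and follows the paper's route. The paper likewise reruns the characteristic-function peeling from Theorem \ref{thm:isotropic_identifiability}, using Proposition \ref{prop:noiseless_identifiability b} in place of Proposition \ref{prop:noiseless_identifiability}, and proves that proposition by reducing to a single affine hull and matching the connected components of the support. Your version is actually more careful than the paper's sketch in two places: peeling from a minimal-dimensional hull ensures the restriction to $\cA$ picks up no mass from lower-dimensional components nested inside $\cA$, and your clopen argument justifies the paper's unproved claim that the connected components of $\bigcup_k \cS_k$ are exactly the $\cS_k$.
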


We use the corresponding version of Proposition \ref{prop:noiseless_identifiability}, modified for Assumption B.

    \begin{proposition}\label{prop:noiseless_identifiability b}
Suppose $\sum_{k\in[K]} \pi_k G_k = \sum_{k\in[K']} \pi_k' G_k'$ where both $\{\cS_k\}_{k\in[K]}$ and $\{\cS_{k}'\}_{k'\in[K']}$ satisfy Assumption \hyperref[assume:B]{B} and $\pi\in \intr \Delta^{K-1}$ and $\pi'\in\intr \Delta^{K'-1}$, then $K=K'$ and there exists a permutation $\tau$ of $[K]$ such that $G_k=G_{\tau(k)}'$ and $\pi_k=\pi_{\tau(k)}'$.
\end{proposition}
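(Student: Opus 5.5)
We follow the peeling strategy from the proof of Proposition \ref{prop:noiseless_identifiability}, but the unit we peel is now a whole affine space. All components sharing that space are then separated using connectedness and disjointness of their supports. Write $P=\sum_k\pi_kG_k=P'$, $\cS_k=\supp(G_k)$, $\cA_k=\aff\cS_k$, $d_k=\dim\cA_k$, and similarly for the primed objects.

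\textbf{Step 1: minimal dimensions agree.} Let $d_{\min}=\min_k d_k$ and $d'_{\min}=\min_k d'_k$. Exactly as in Step 1 of the proof of Proposition \ref{prop:noiseless_identifiability}, if $d_{\min}<d'_{\min}$, take $i$ with $d_i=d_{\min}$. Then $P(\cA_i)\ge\pi_i>0$. On the other hand $G'_j(\cA_i)=0$ for every $j$, because $\cA_i\cap\cA'_j$ has dimension below $d'_j$ and $G'_j\ll\cH_{d'_j}$ on $\cA'_j$. This is a contradiction, so $d_{\min}=d'_{\min}$.

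\textbf{Step 2: restrict to one minimal affine space.} Fix $\cA=\cA_i$ with $d_i=d_{\min}$, and let $I=\{k:\cA_k=\cA\}$. Any $k\notin I$ has either $d_k>d_{\min}$ or $\cA_k\cap\cA$ of dimension below $d_{\min}$; in both cases $G_k(\cA)=0$. Hence
\[
P|_\cA\propto\sum_{k\in I}\pi_kG_k .
\]
By the Step 1 argument applied with roles exchanged, $I'=\{j:\cA'_j=\cA\}$ is nonempty, and
\[
\sum_{k\in I}\pi_kG_k=\sum_{j\in I'}\pi'_jG'_j=:\nu .
\]
This measure $\nu$ lives on $\cA$ and has support $\cS=\bigcup_{k\in I}\cS_k=\bigcup_{j\in I'}\cS'_j$, since a finite union of closed sets is closed and each summand has positive weight.

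\textbf{Step 3 (main step): matching within $\cA$.} Assumption \hyperref[assume:B]{B}(ii) applies to pairs in $I$, since they share an affine hull. So $\{\cS_k\}_{k\in I}$ are pairwise disjoint, closed and connected. Each $\cS_k$ is therefore a union of connected components of $\cS$. Conversely, a connected component $C$ of $\cS$ meets some $\cS_k$; as $\cS_k$ is connected and contained in $\cS$, we get $\cS_k\subset C$. The delicate point is that a finite disjoint union of closed sets makes each $\cS_k$ clopen in $\cS$. Hence $C=\cS_k$ exactly, and the connected components of $\cS$ are precisely $\{\cS_k\}_{k\in I}$. The same holds for $\{\cS'_j\}_{j\in I'}$. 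Thus $|I|=|I'|$, and there is a bijection $\tau$ with $\cS_k=\cS'_{\tau(k)}$. Restricting $\nu$ to $\cS_k$ kills every other summand on each side, because the sets are disjoint. This gives $\pi_kG_k=\pi'_{\tau(k)}G'_{\tau(k)}$, and taking total masses yields $\pi_k=\pi'_{\tau(k)}$ and $G_k=G'_{\tau(k)}$. I expect this topological step to be the main obstacle. Everything else is bookkeeping copied from Proposition \ref{prop:noiseless_identifiability}. Closedness of supports is essential here; a non-closed connected set would break the clopen argument.

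\textbf{Step 4: induction.} Remove the matched components in $I$ and $I'$ from both mixtures and renormalize by $1-\sum_{k\in I}\pi_k$. The remaining collections still satisfy Assumption \hyperref[assume:B]{B}, since they are subcollections. Iterate Steps 1–3; each round removes equally many components from both sides. Finiteness ensures the procedure terminates simultaneously on both sides, for otherwise one side would have positive leftover mass and the other none. This gives $K=K'$ and a global permutation $\tau$, as claimed.
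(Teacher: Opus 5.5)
Your route is the same as the paper's. You reduce to a single affine hull using the peeling argument of Proposition~\ref{prop:noiseless_identifiability}, then match components inside that hull by counting connected components of the support.

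\textbf{The gap.} It is in Step~3, where you conclude that the sets $\{\cS_k\}_{k\in I}$ are connected because Assumption~B(ii) ``applies to pairs in $I$.'' If $|I|=1$ (or $|I'|=1$) there is no pair. Assumption~B as stated places no connectedness requirement on a support that is alone on its affine hull. In that case $\cS=\cS_k$ may have several connected components, so the identification of the components of $\cS$ with $\{\cS_k\}_{k\in I}$ fails, and $|I|=|I'|$ need not hold.

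\textbf{The statement itself is false as written.} The gap cannot be patched under the literal Assumption~B, as the following example shows.
Take $D\ge 2$ and let $e_1,e_2$ be the first two coordinate vectors. Let $G_1$ be uniform on $\{te_1: t\in[0,1]\cup[2,3]\}$. Let $G_1'$ and $G_2'$ be uniform on $\{te_1:t\in[0,1]\}$ and $\{te_1:t\in[2,3]\}$ respectively. Then $G_1=\tfrac12G_1'+\tfrac12G_2'$.
The family $\{\cS_1\}$ satisfies Assumption~B vacuously, and $\{\cS_1',\cS_2'\}$ satisfies it through (ii). Yet $K=1\neq 2=K'$.
To get $K=2\neq3=K'$, add a common component uniform on $\{te_2:t\in[0,1]\}$ to both sides, with weights $\tfrac12$ on the left and $\tfrac14,\tfrac14,\tfrac12$ on the right. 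This is exactly the configuration of $G_3$ in Figure~\ref{fig: non convex example model}.

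\textbf{Relation to the paper and how to repair it.} The paper's proof has the identical gap. It simply asserts that the support on each affine space ``consists of $|\cI_\ell|$ connected components by Assumption~B.''
The natural repair is to strengthen Assumption~B so that every $\cS_k$ is required to be connected. This is also the only reading consistent with the paper's remark that the minimal B-representation of the measure in Figure~\ref{fig: non convex example model} has $K=4$.
Under that hypothesis your proof is correct. Your Step~3 is then more complete than the paper's. Each $\cS_k$ is clopen in the finite disjoint union $\cS$ of closed sets. This is exactly what justifies the identification of the connected components of $\cS$ with $\{\cS_k\}_{k\in I}$, which the paper takes for granted.
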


\begin{proof}
Based on Proposition \ref{prop:noiseless_identifiability}, we can reduce the problem to identification within each affine hull. In particular, let $\cA_\ell$ be an affine space such that at least one of the components $\cS_k$ has $\cA_\ell$ as the affine hull. Let $\cI_\ell=\{k:\aff\cS_k = \cA_\ell\}$, and similarly $\cI_\ell'$, be the indices of the components corresponding to this affine space. The problem now boils down to identifiability of the components in the mixture
$$\sum_{k\in \cI_\ell} \pi_k G_k = \sum_{k\in \cI_\ell'} \pi_k' G_k'.$$

Looking at the corresponding supports, the support of the LHS of the above display consists of $|\cI_\ell|$ connected components, while that of the RHS consists of $|\cI_\ell'|$ connected components by Assumption \hyperref[assume:B]{B}. This implies $|\cI_\ell|=|\cI_\ell'|$. By disjointness, for any $k\in\cI_\ell$, there is exactly one $k'\in\cI_\ell'$ corresponding to $G_k$, and since the probability mass on $\cS_k=\cS_{k'}'$, the weights must match. This argument gives a one-to-one matching of the components within this affine space. Combining over all the affine spaces, the proposition follows.
\end{proof}

\begin{proof}[Proof of Theorem \ref{thm:isotropic_identifiability b}]
This follows using the same argument as in the proof of Theorem \ref{thm:isotropic_identifiability}, replacing Assumption \hyperref[assume:A]{A} by Assumption \hyperref[assume:B]{B}, and using Proposition \ref{prop:noiseless_identifiability b}, in place of Proposition \ref{prop:noiseless_identifiability}. Furthermore, results corresponding to Remark \ref{remark: laplace} also extend to this setting.
\end{proof}

It is worth noting that similarly the minimal representation type result as in Theorem \ref{thm:isotropic_identifiability} (part 2) can also be stated using Assumption \hyperref[assume:B]{B}.

\section{Proofs in Section \ref{sec:posterior_contraction}}

\subsection{Proof of Proposition \ref{prop:kl_upper bound}}
\label{app:proof_prop:kl_upper bound}
\begin{proof}
    Suppose $d(\psi,\psi')=\epsilon$. This implies that (possibly after relabeling of the components) that 
    $\sum_k \left(d_M(\Theta_k, \Theta_k') + |\pi_k - \pi_k'| + |\sigma_k^2 - \sigma_k'^2|\right) = \epsilon.$
    Using the definition of the metric $d_M$ (and a potential further relabeling of the rows within $\Theta_k'$'s),
    for every $k\in[K], j\in [d]$, we have $\norm{\theta_{kj} - \theta_{kj}'}<\epsilon$. We argue that this implies $W_2(G_k, G_k') \leq \epsilon$. We have $W^2_2(G_k,G_k') = \inf_{\Pi} \int \norm{x-y}^2 d\Pi(x,y)$ where the infimum is taken over all couplings $\Pi$ of $G_k$ and $G_k'$. Construct a specific deterministic coupling: Draw $\beta\sim p_{\beta}$ and let $X=\Theta_k\beta$ and $Y=\Theta_k'\beta$ (sharing the same $\beta$), consider $\Pi$ to be the distribution of $(X,Y)$. It is clear that it is a valid coupling. This can be also viewed in terms of Monge maps: Let $T:\cS_k \to \cS_k'$ be the map which sends $\sum_j \alpha_j \theta_{kj} = x\mapsto y=\sum_j \alpha_j \theta_{kj}'$ and then $\Pi = (\text{Id}, T) \# G_k$. Under this coupling, for every $\alpha\in \Delta^{d-1}$, $\norm{x-y} = \norm{\sum_j \alpha_j \theta_{kj} - \sum_j \alpha_j \theta_{kj}'} = \norm{\sum_j \alpha_j (\theta_{kj} - \theta_j')}\leq \sum_j \alpha_j \norm{\theta_{kj} - \theta_{kj}'} < \epsilon$. Thus, for any $(x,y)\sim \Pi$, we have $\norm{x-y}^2 < \epsilon^2$, which gives that for this specific coupling $\int \norm{x-y}^2d\Pi (x,y) < \epsilon^2$, which shows that $W_2(G_k, G_k') < \epsilon$.

    Let $Q_k=\int \phi(x\mid \eta) dG_k(\eta)$ denote the $k$-th component distribution for $P_{\psi}$. We use properties of the KL-divergence to upper bound the KL between $p_{\psi}$ and $p_{\psi'}$ in terms of a convex sum of the $W_2^2$ between pairs of components. In particular, using the joint convexity of KL-divergence, for any coupling $q$ of $\pi$ and $\pi'$ and any coupling $\Pi_{k,k'}$ of $(G_k, G_{k'}')$ (for each $k,k'$) we have
    \begin{align*}
        \KL(P_{\psi}, P_{\psi'}) &= \KL\left(\sum_k \pi_k Q_k, \sum_k \pi_k' Q_{k}'\right) \\
        &\leq \sum_{k,k'} q_{k,k'} \KL(Q_k, Q_{k'}') \\
        &= \sum_{k,k'} q_{k,k'} \KL\left(\int \phi(\cdot\mid \eta,\sigma_k^2 I) dG_k(\eta), \int \phi(x\mid \eta, \sigma_{k'}'^2 I) dG_{k'}'(\eta)\right) \\
        &\leq \sum_{k,k'} q_{k,k'} \int \KL\left(\phi(\cdot|\eta,\sigma_k^2 I), \phi(\cdot|\eta',\sigma_{k'}'^2 I)\right) \Pi_{k,k'}(d\eta,d\eta') \\
        &= \frac{1}{2}\sum_{k,k'} q_{k,k'} \int \left(\frac{1}{\sigma_{k'}'^2}\norm{\eta - \eta'}^2 + D\frac{\sigma_k^2}{\sigma_{k'}'^2} - D + D(\log \sigma_{k'}'^2 - \log \sigma_k^2) \right)\Pi_{k,k'}(d\eta, d\eta') \\
        &\leq \frac{1}{2 }\sum_{k,k'} q_{k,k'} \left[\frac{1}{\sigma_{k'}'^2} W_2^2(G_k, G_{k'}') + D\left(\log \sigma_{k'}'^2 - \log \sigma_k^2 + \frac{\sigma_k^2 - \sigma_{k'}'^2}{\sigma_{k'}'^2}\right)\right]\\
        &\leq \frac{1}{2}\sum_{k,k'} q_{k,k'} \left[\frac{1}{\sigma_k'^2} W_2^2(G_k, G_{k'}') + D\left(\left|\log \sigma_{k'}'^2 - \log \sigma_k^2\right| + \frac{\left|\sigma_k^2 - \sigma_{k'}'^2\right|}{\sigma_{k'}'^2}\right)\right] \\
        &\leq \frac{1}{2 \underline{\sigma}^2}\sum_{k,k'} q_{k,k'} \left[W_2^2(G_k, G_{k'}') + D\left|\sigma_k^2 - \sigma_{k'}'^2\right|\right].
    \end{align*}
    where the third-from-last line follows because of the definition of $W_2$ (using the specific coupling $\Pi_{k,k'}$) and the prior line holds for any coupling $\Pi_{k,k'}$. The last line follows from the fact that $\log x$ on $[\underline{\sigma}^2, \infty)$ is Lipschitz continuous with Lipschitz constant $1/\underline{\sigma}^2$. Now, for every $k$, $W_2^2(G_k, G_k') <\epsilon^2$, while for $k\neq k'$, we use the trivial upper bound $W_2^2(G_k, G_{k'}') \leq \diam(\cT)^2$. For the variances, $|\sigma_k^2 - \sigma_k'^2|<\epsilon$ for every $k$ and for $k\neq k'$, $|\sigma_k^2 - \sigma_{k'}'^2| < \diam(S)$. 
    
    Choose $q_{k,k}=\min \{\pi_k, \pi_{k}'\}$. Recall, $\sum_k |\pi_k - \pi_k'|<\epsilon$. For a proper coupling, $q$ (view as a $K\times K$ matrix) satisfies $\sum_k q_{k,k'}=\pi_{k'}'$  (columns add to $\pi'$) and $\sum_{k'} q_{k,k'} = \pi_k$ (rows add to $\pi$). Now, for each $k$, either $q_{k,k}=\pi_k$ (if $\pi_k\leq \pi_k'$), else $q_{k,k}=\pi_{k}'$. In the former case, it enforces $q_{k,k'}=0$ for all $k'\neq k$ (i.e., the $k$th row is 0 apart from the diagonal) and $\sum_{j\neq k} q_{jk} = \pi_k'-\pi_k$ (i.e., the $k$th column adds to the difference aside from the diagonal entry). A similar thing occurs in the latter case. Thus, in either case, if we add the $k$th row and $k$th column (ignoring the diagonal entry), we get
    $\sum_{j\neq k} q_{kj} + \sum_{j\neq k} q_{jk} = |\pi_k - \pi_k'|$.
    Repeating for every $k$, we end up counting each non-diagonal entry twice, which gives
    $$2\sum_{k\neq k'} q_{k,k'} = \sum_k |\pi_k - \pi_k'| < \epsilon.$$
     Thus, the above bound implies that
    \begin{align*}
        \KL(P_{\psi},P_{\psi'}) &\leq \frac{1}{2 \underline{\sigma}^2}\sum_{k,k'} q_{k,k'} \left[W_2^2(G_k, G_{k'}') + D\left|\sigma_k^2 - \sigma_k'^2\right|\right] \\
        &\leq \frac{1}{2\underline{\sigma}^2}\left[\sum_k q_{k,k} \left(\epsilon^2 + D\epsilon\right) + \sum_{k\neq k'} q_{k,k'} \left(\diam(\cT)^2+D\diam(S)\right)\right] \\
        &\leq\frac{1}{2\underline{\sigma}^2} \left[\epsilon^2 + \underbrace{\left(D + \frac{\diam(\cT)^2+D\diam(S)}{2}\right)}_{C(\cT,S)}\epsilon\right]
    \end{align*}
    which concludes the proof.
\end{proof}

\subsection{Proof of Lemma \ref{lemma:identifiability_sequentially exposed}}\label{app:proof_lemma_identifiability_sequentially exposed}

\begin{proof} in Section 
\ref{sec:identifiability}, we can only look at the noiseless version and use the same technique via characteristic functions to identify the component variances. Thus, let $P=\sum_k \pi_k G_k$ corresponding to $\psi_0=(\bTheta,\pi,\Sigma)$ with $\bTheta=(\Theta_1,\dots,\Theta_K)$, where $G_k=(f_{\Theta_k})_{\#}p_{\beta}$, with $f_{\Theta_k}:\beta \mapsto \Theta_k\beta\in \cS_k$, the $k$th component polytope. We show that the only $Q=\sum_k \pi_k' G_k'$ with $K$ components so that $P=Q$ has to satisfy $(\pi_k, G_k) = (\pi_k', G_k')$ up to a permutation.

Let $\cS=\cup_k \cS_k$ be the support of $P$. Note that since the polytopes in $P$ are totally exposed, it means that all the $Kd$ end-member parameters $\theta_{k,j}$ are vertices of $\conv(\cS)$. Thus, the collection of all $\{\theta_{k,j}\}$ is identifiable from the support -- we now discuss how we can identify which of these belong to the same component. By the definition of exposed, each vertex of $\cC$ uniquely belongs to one component of $P$. Let $\theta\in \extr(\conv(\cS))$ be one of the end-members -- without loss of generality assume $\theta\in \cS_1$. Since each $\cS_k$ is compact and $\theta$ is separated from $\cup_{k\neq 1} \cS_k$, we know that for sufficiently small $\epsilon>0$, the vertex figure $B_{\epsilon}(\theta) \cap \cS \subset \cS_1\setminus\cup_{k\neq 1} \cS_k$ -- the rays of this tangent cone correspond to edges of $\cS_1$. Thus, in addition to the vertex $\theta$, we can identify these extreme rays from $\theta$, each ray giving the direction from $\theta$ to other end-members in $\cS_1$, using the polytope structure of $\cS_1$. Let $r_1,\dots,r_k$ be $k$ extreme rays identified from a small vertex cap around $\theta$. We know that there must be a end-member of $\cS_1$ along each of the rays $\{\theta + tr_j:t>0\}$. Let $v$ be the intersection of such a a ray (extended) with the $\conv(\cS)$. We claim that $v$ must be a end-member of $\cS_1$. If not (this includes both cases that $v$ is not an end-member corresponding to any component or $v$ is a end-member of a different component), then there exists a vertex $\theta^*$ of $\cS_1$ that is strictly in the line segment joining $\theta$ and $v$ (since vertices from different components must be distinct), violating the fact that all end-members of $\cS_1$ are exposed. Thus, starting from $\theta$, we can identify $k$ other vertices $\theta_1,\dots,\theta_k$ that are all end-members of $\cS_1$. We can iterate this argument for each of these $\theta_k$s, and continue identifying vertices in $\cS_1$ until we have identified all $d$ of these, since the end-members of $\cS_1$ form a connected graph via edges. Note that there might be $\theta,\theta'$ in the collection of vertices of $\conv(\cS)$ such that the entire line segment joining them is in $\cS$, but they belong to different components (see Figure \ref{fig:sequentially_exposed}(c) for an example) -- however, the additional information from a vertex cone in a small neighborhood of a vertex provides more information that just the vertex, it captures the local edge-connectivity across end-members locally within components. See Figure \ref{fig: total exposure identifiable proof} for an illustration of the construction.

Once we have identified $d$ points from the vertices of $\conv(\cS)$ $\theta_1,\dots,\theta_d$ that all belong to the same component, we effectively have identified $\cS_1$. Unlike the proof of Proposition \ref{prop:noiseless_identifiability}, we cannot simply restrict the measure $P$ to this polytope to identify the component, since different components are allowed to be on the same affine space with non-trivial overlap of the interiors (Figure \ref{fig: polytope example model} or the illustration above). Since $p_\beta$ is known, we know $G_1$ (since $G_1=(f_{\Theta_1})_{\#} p_\beta$ -- then, we can identify $\pi_1$ simply as $\lim_{\epsilon\downarrow 0} P(B_{\epsilon}(\theta_1))/G_1(B_\epsilon(\theta_1))$. 

\begin{figure}
    \centering
    \includegraphics[width=0.5\linewidth]{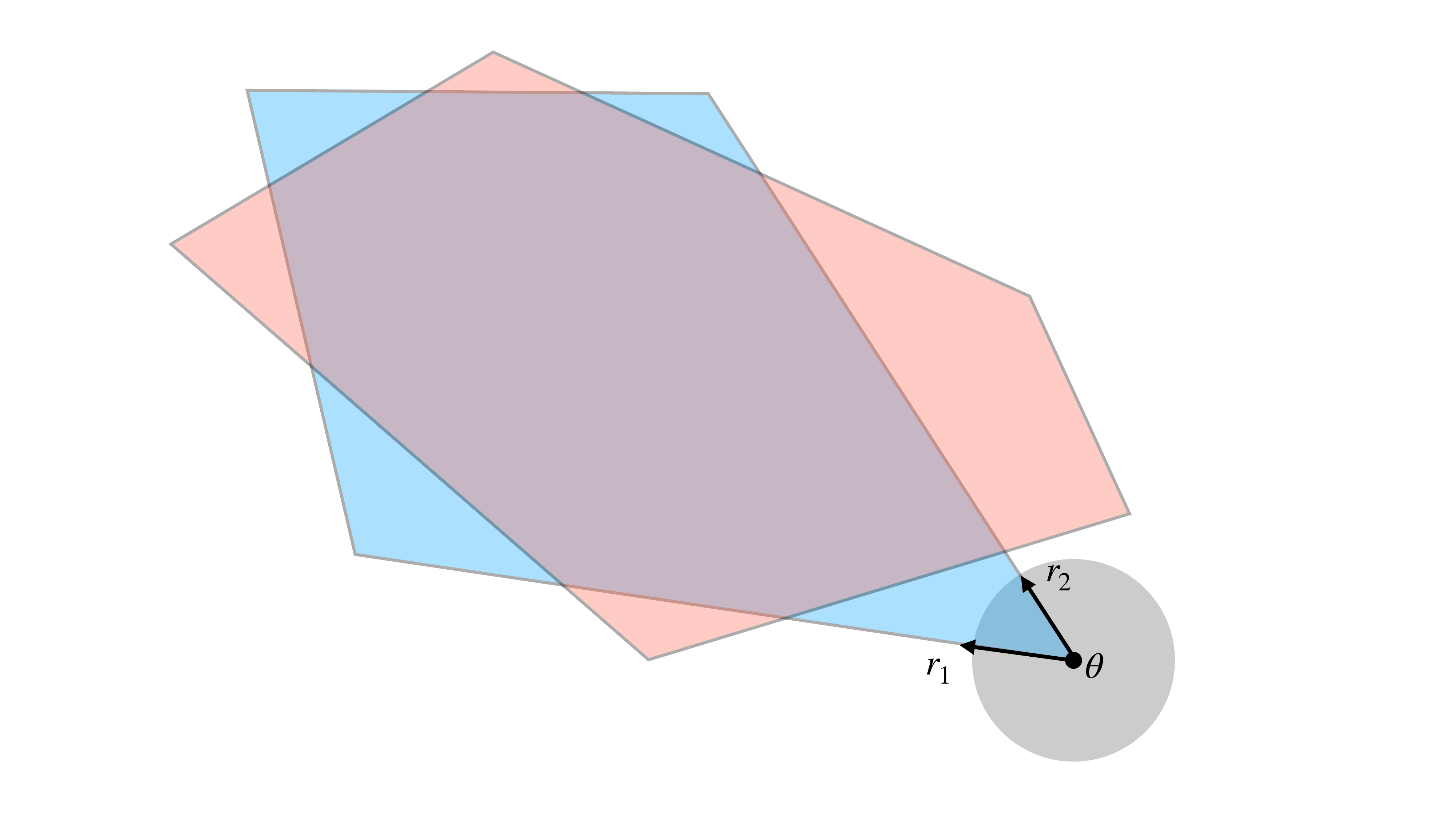}
    \caption{Illustration of the proof: Starting from $\theta$, one of the vertices of the overall convex hull, we can identify \textit{extreme rays}, $r_1, r_2$ here, emanating from $\theta$ so that vertices from the same component must lie on these rays extended. Note that the direction towards \textit{all} other vertices of the same component cannot be had from the local geometry around $\theta$ -- however, this provides a relation on the collection of $Kd$ vertices of the overall convex hull: $\theta\sim \theta'$ iff $\theta$ lies on one of the extreme rays from $\theta'$ and $\theta'$ lies on one of the extreme rays from $\theta$. If we construct a graph with the $Kd$ vertices and construct edges based on this relation, then the partition corresponding to the connected components correspond to the partition into different component polytopes.}
    \label{fig: total exposure identifiable proof}
\end{figure}
Once this component (the vertices and mixture probabilities) are recovered, remove this component, i.e., take $\tilde{P}=P - \pi_1 G_1$ (one can re-normalize to work with probability measures) and repeat the same process until all the components are recovered. Note that since $K$ is known, apart from the order of recovering the components, this process leads to a unique collection of components, precisely $\psi_0$. Any other partition of the $Kd$ vertices of $\conv(\cS)$ into $K$ components will violate the structural constraints, as discussed above.
\end{proof}

\subsection{Proof of Inverse Bound (Theorem \ref{thm:inverse_bound})}
\label{app:proof_thm:inverse_bound}

We first provide a brief overview of the proof.

\begin{proof}
We break this into smaller steps.
\textbf{Step 1: } If the result is false, there exists a sequence $\{\psi_n\}$ in $\Psi_{(\cT,S)}$ such that the following holds:
\begin{align}
    d(\psi_n,\psi_0)&\to 0 \\
    d_{\TV}(p_{\psi_n}, p_{\psi_0})/d(\psi_n,\psi_0) &\to 0
\end{align}
as $n\to\infty$. For any $n\geq 0$, let $\psi_n = (\bTheta^{(n)}, \pi^{(0)}, \Sigma^{(0)})$. Since $d(\psi_n,\psi_0)\to 0$, after a potential permutation of the components (and vertices within a component), we must have $\theta_{kj}^{(0)}\to\theta_{kj}^{(0)}$ for all $k,j$, $\pi_k^{(n)}\to\pi_k^{(0)}$ and $\sigma_k^{(n)}\to\sigma_k^{(0)}$ for all $k$. Using a subsequence, if required, suppose
\begin{align}
    \frac{\theta_{kj}^{(n)}-\theta_{kj}^{(0)}}{d(\psi_n,\psi_0)} &\to a_{kj} \in [-1,1]^D,\\
    \frac{\pi_{k}^{(n)}-\pi_{k}^{(0)}}{d(\psi_n,\psi_0)} &\to b_{k} \in [-1,1],\\
    \frac{(\sigma_{k}^2)^{(n)}-(\sigma_{k}^2)^{(0)}}{d(\psi_n,\psi_0)} &\to c_{k} \in [-1,1]
\end{align}
for all $k,j$ involved, where $\ba, \bb, \bc$ are constants. By definition of $d$, we have $\sum_{k,j}\norm{a_{kj}} + \sum_k |b_k| + \sum_k |c_k| = 1$, whence, not all of these coefficients can be 0. Note, by Fatou's lemma,
\begin{align*}
    0 &= \lim_{n\to \infty} \frac{d_{\TV}(p_{\psi_n},p_{\psi_0})}{d(\psi_n, \psi_0)} \\
    &= \lim_{n\to\infty} \int_{\bbR^D} \frac{\left|p_{\psi_n}(\bx) - p_{\psi_0}(\bx)\right|}{d(\psi_n,\psi_0)} d\bx \\
    &\geq \int_{\bbR^D} \lim_{n\to\infty} \left|\frac{p_{\psi_n}(\bx)-p_{\psi_0}(\bx)}{d(\psi_n,\psi_0)}\right| d\bx
\end{align*}
which implies that
\begin{equation}\label{eq:inv_proof1}
    \lim_{n\to\infty} \frac{p_{\psi_n}(\bx)-p_{\psi_0}(\bx)}{d(\psi_n,\psi_0)} = 0, \text{ a.s. } \bx\in\bbR^D.
\end{equation}

\vspace{1em}
\textbf{Step 2: } Note that
$$p_{\psi}(\bx) = \sum_k \pi_k \underbrace{\int \phi_D(\bx|\Theta_k\beta,\sigma_k^2I) dP(\beta)}_{f(\bx|\xi_k)}$$
where the integral is over $\Delta^{d-1}$ and we write $P_{\beta}$ as $P$ and $\xi_k=(\Theta_k,\sigma_k^2)$. We argue that differentiating the above with respect to the parameters can be done term-wise by differentiating under the integral. Note that by the Mean Value Theorem
$$f(\bx|\xi_k') - f(\bx|\xi_k) = \int \left(\phi(\bx|\Theta'\beta,\sigma'^2I) - \phi(\bx|\Theta\beta,\sigma_k^2I\right) dP(\beta) = \int (\xi'-\xi)^\top \nabla_{\xi} \phi(\bx|\tilde{\xi}(\beta)) dP(\beta)$$
for some $\tilde{\xi}(\beta)$, a convex combination of $\xi$ and $\xi'$ (depending on the choice of $\beta$). For any such choice, $\tilde{\xi}=(\tilde{\Theta},\tilde{\sigma}^2)$, where $\tilde{\Theta}\beta\in \conv(\cT)$ and $\tilde{\sigma}^2\in \conv(S)$. Furthermore, 
\begin{align*}
    \nabla_{\theta_j} \phi(\bx|\Theta\beta, \sigma^2 I) &= \frac{\beta_j(\bx - \Theta\beta)}{\sigma^2}\phi(\bx|\Theta\beta,\sigma^2I) \\
    \nabla_{\sigma^2} \phi(\bx|\Theta\beta, \sigma^2 I) &= \left(\frac{\norm{\bx - \Theta\beta}^2}{2\sigma^4} - \frac{D}{2\sigma^2}\right)\phi(\bx|\Theta\beta,\sigma^2I),
\end{align*}
which gives
\begin{align*}
    \norm{\nabla_{\theta_j} \phi(\bx|\Theta\beta, \sigma^2 I)} &\leq \beta_j \sup_{\eta\in\cT, \nu^2\in S} \norm{\bx-\eta}\phi(\bx|\eta, \nu^2 I) =: \beta_j M_{\ell}(\bx) \\
    \left|\nabla_{\sigma^2} \phi(\bx|\Theta\beta, \sigma^2 I)\right| &\leq \sup_{\eta\in\cT, \nu^2\in S} \left(\frac{\norm{\bx - \eta}^2}{2\nu^4} + \frac{D}{2\nu^2}\right)\phi(\bx|\eta,\nu^2I) =: M(\bx)
\end{align*}
where $M_{\ell}(\bx)<\infty$ and $M(\bx)<\infty$ are measureable functions and the supremums are attained in both above as $\cT,S$ are compact, and $S$ is bounded away from $0$. Moreover, $\int \beta_j M_{\ell}(\bx) dP(\beta) = M_{\ell}(\bx) \int \beta_j dP(\beta)<\infty$ (since first moment of $P_{\beta}$ exists) and similarly $\int M(\bx) dP(\beta) = M(\bx)$. Hence, by Dominated Convergence Theorem, with $\xi'\to \xi=(\Theta,\sigma^2)$,
$$\nabla_{\xi} f(\bx|\xi) = \int \nabla_{\xi} \phi(\bx|\Theta\beta,\sigma^2 I) dP(\beta).$$
Now,
\begin{align}
    p_{\psi_n}(\bx) - p_{\psi_0}(\bx) &= \sum_k \pi_k^{(n)} f(\bx|\xi_k^{(n)}) - \sum_k \pi_k^{(0)} f(\bx|\xi_k^{(0)}) \\
    &= \sum_k \left(\pi_k^{(n)} - \pi_k^{(0)}\right)f(\bx|\xi_k^{(n)}) + \sum_k \pi_k^{(n)}\left(f(\bx|\xi_k^{(n)}) - f(\bx|\xi_k^{(0)})\right).
\end{align}
Dividing by $d(\psi_n,\psi_0)$ and taking the limit $n\to\infty$, we get
\begin{equation}
    \lim_{n\to\infty} \frac{p_{\psi_n}(\bx) - p_{\psi_0}(\bx)}{d(\psi_n,\psi_0)} = \sum_k b_k f(\bx|\xi_k^{(0)}) + \sum_k \pi_k^{(0)} \left(\ba_k^\top\nabla_{\Theta}f(\bx|\xi_k^{(0)}) + c_k\nabla_{\sigma^2} f(\bx|\xi_k^{(0)})\right).
\end{equation}
From here onwards, we drop the superscript $(0)$. Plugging in the forms of the gradients, and combined with Equation \eqref{eq:inv_proof1}, we end up with the following first-order identifiability condition
\begin{equation}\label{eq:first-order identity}
\begin{aligned}
    \sum_{k\in[K]} b_k\int &\phi(\bx\mid \Theta_k^\top\beta, \sigma_k^2)dP_{\beta}(\beta) + \sum_{k\in[K]} \frac{\pi_k}{\sigma_k^2} \int \left(\sum_j \beta_j a_{kj}\right)^\top \left(\bx - \Theta_k^\top\beta\right) \phi(\bx\mid \Theta_k^\top\beta,\sigma_k^2)dP_{\beta}(\beta) \\
        &+ \sum_{k\in[K]} \pi_k \int c_k\left(\frac{\norm{\bx - \Theta_k^\top\beta}^2}{2\sigma_k^4} - \frac{D}{2\sigma_k^2}\right)\phi(\bx\mid \Theta_k^\top\beta,\sigma_k^2) dP_{\beta}(\beta) = 0 \text{ a.s. } \bx\in\bbR^D.
\end{aligned}
\end{equation}
The remainder of the proof involves showing that the coefficients $\ba, \bb, \bc$ are all 0, which would lead to a contradiction.

\vspace{1em}
\textbf{Digression: }\textbf{Reparametrization and change of coordinates:} We make two observations. For the first one, we note that if we marginalize Equation \eqref{eq:first-order identity} over all $x_2,\dots,x_D$ leaving only $x_1$ (we call this \textit{projection to the first coordinate}), we end up with
    \begin{align}\label{eq:first-order identity 1dim}
        \sum_{k\in[K]} b_k\int &\phi(x_1\mid \btheta_{k\cdot 1}^\top\beta, \sigma_k^2)dP_{\beta}(\beta) + \sum_{k\in[K]}\frac{\pi_k}{\sigma_k^2} \int \left(\sum_j \beta_j a_{kj1}\right)(x_1 - \btheta_{k\cdot 1}^\top \beta)\phi(x_1\mid \btheta_{k\cdot 1}^\top\beta, \sigma_k^2)dP_{\beta}(\beta) \nonumber\\
        &+ \sum_{k\in[K]} \pi_k \int c_k \left(\frac{(x_1 - \btheta_{k\cdot 1}^\top\beta)^2}{2\sigma_k^4} - \frac{1}{2\sigma_k^2}\right)\phi(x_1\mid \btheta_{k\cdot 1}^\top\beta, \sigma_k^2)dP_{\beta}(\beta) = 0\quad \text{a.s. } x_1\in\bbR
    \end{align}
    where $\btheta_{k\cdot j}=(\theta_{k1j},\theta_{k2j},\dots,\theta_{kdj})^\top\in\bbR^d$ is the $j$th column of $\Theta_k$. This is because $\phi(x\mid \Theta_k^\top\beta, \sigma_k^2 I) = \prod_{j=1}^D \phi(x_j\mid \btheta_{k\cdot j}^\top\beta,\sigma_k^2)$ and $\norm{x - \Theta_k^\top\beta}^2 = \sum_{j=1}^D (x_j - \btheta_{k\cdot j}^\top \beta)^2$. Furthermore, 
    \begin{align*}
        \int \phi(x|\mu,\sigma^2) dx  = 1, \quad \int (x-\mu)\phi(x|\mu,\sigma^2) = 0, \quad\int (x-\mu)^2\phi(x|\mu,\sigma^2) = \sigma^2.
    \end{align*}
    Note that Equation \eqref{eq:first-order identity 1dim} is for almost every $x_1\in \bbR$ (1-dim) and can be rewritten as (combining like terms)
    \begin{align}\label{eq:first-order-1dim rewrite}
        \int \sum_k \left[\frac{\pi_k c_k}{2\sigma_k^4} (x_1 - \btheta_{k\cdot 1}^\top\beta)^2 + \frac{\pi_k}{\sigma_k^2}(\sum_j \beta_j a_{kj1}) (x_1 - \btheta_{k\cdot 1}^\top\beta) + \left(b_k - \frac{c_k \pi_k}{2\sigma_k^2}\right)\right] &\phi(x_1\mid \btheta_{k\cdot 1}^\top\beta, \sigma_k^2)dP_{\beta}(\beta) \nonumber\\
        &=0 \quad \text{a.s. } x_1\in\bbR.
    \end{align}
    \begin{figure}
    \centering
    \includegraphics[clip, trim=0cm 2cm 1cm 8cm, width=0.9\linewidth]{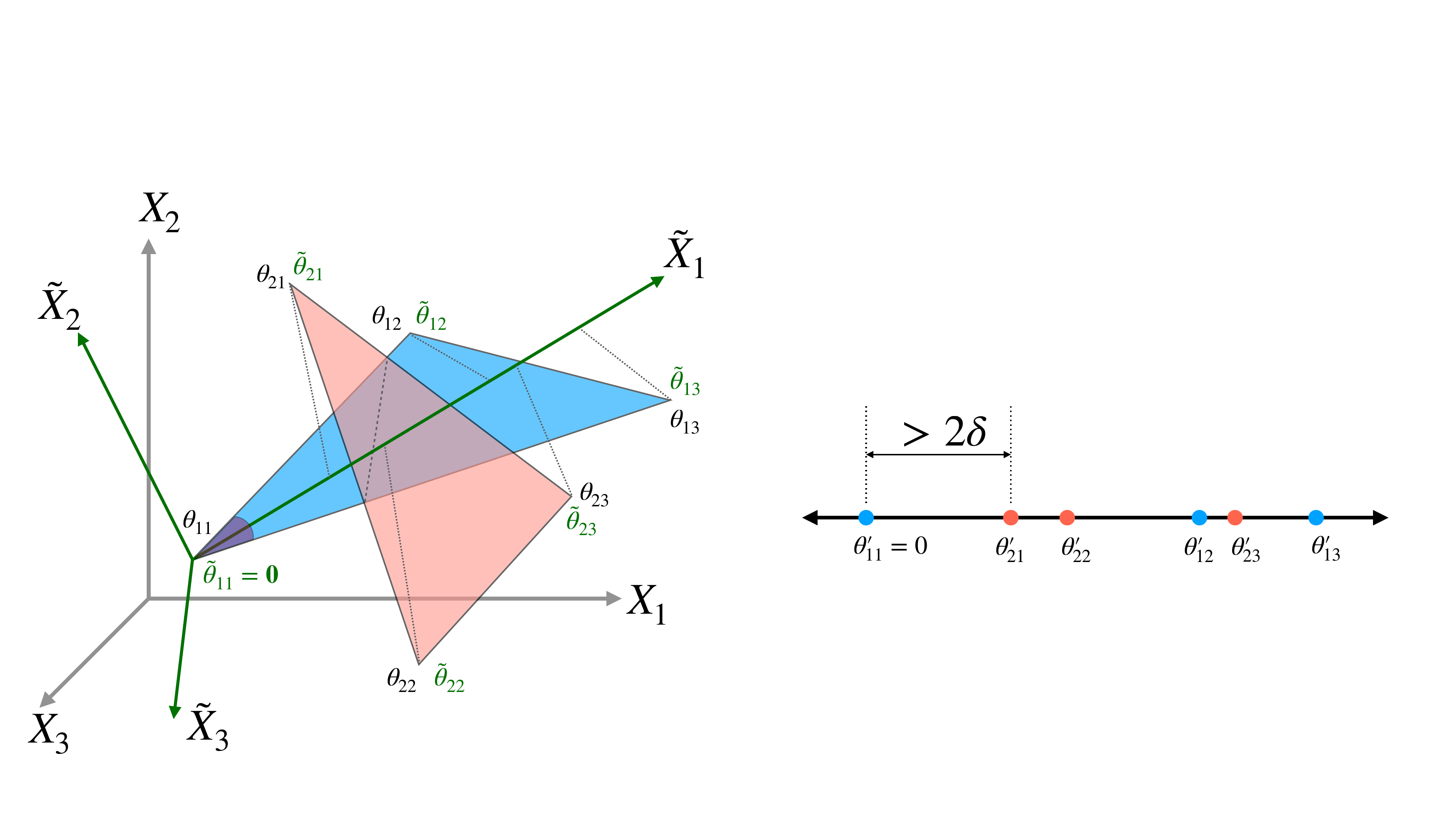}
    \caption{Extracting the effect of coefficient $a_{11}$ corresponding to vertex $\theta_{11}$ in the proof of the inverse bound (case $K=2, d=3, D=3$: first a change of coordinates from $(X_1,X_2,X_3)$ to $(\tilde{X}_1,\tilde{X}_2, \tilde{X}_3)$ using translation and orthogonal rotation only -- in this new system $\theta_{kj}$ becomes $\tilde{\theta}_{kj}$ and $a_{kj}$ becomes $\tilde{a}_{kj}$. If $a_{11}\neq\boldsymbol{0}$, such a coordinate change is possible ensuring $\tilde{a}_{111}\neq 0$. Vertex $\theta_{11}$ is an exposed point, hence the coordinate change can be made also ensuring that $\tilde{\theta}_{11}$ is origin in new system and $\tilde{\theta}_{kj1}>2\delta$ for all other vertices -- this is shown in the second figure, where $\theta_{kj}'=\tilde{\theta}_{kj1}\in\bbR$. The point of this transformation is that the effect of $a_{11}$ through the term $\sum_j \beta_j\theta_{kj}$ can be controlled by marginalizing to this $\tilde{X}_1$ axis and noting that for sufficiently small cap around $\theta_{11}$ (purple cap in left figure), the corresponding $\sum_j \beta_j \tilde{a}_{1j1}$ can be ensured to have the same sign as $\tilde{a}_{111}\neq 0$. Furthermore, all other vertices are bounded away from this $\theta_{11}'$ which allows one to extract the effect of this $\tilde{a}_{111}$ from the identity, which leads to showing that it must be 0.}
    \label{fig:enter-label}
\end{figure}
    Th next observation is regarding a change of coordinates in the ambient space $\bbR^D$. If we change the coordinates to $\tilde{\bx} = \bv + O\bx$, where $\bv\in\bbR^D$ is a translation and $O$ is orthogonal (where $\bx$ is the original coordinate), then this only changes the vertices $\bTheta$. Due to the fact that the Jacobian of this coordinate change is 1, we have $p_{\tilde{\Psi}}(\tilde{\bx}) = p_{\Psi}(\bx)$, where $\tilde{\Psi}$ is the parameter in the new system. Finally, we also have $d(\Psi_1, \Psi_2) = d(\tilde{\Psi}_1, \tilde{\Psi}_2)$ since the coordinate-change preserves Euclidean distances. Thus, in the new system, $d_{\TV}(P_{\tilde{\Psi}}, P_{\tilde{\Psi}_0})/d(\tilde{\Psi}, \tilde{\Psi}_0)$ has the same lim inf. Such a change of coordinates also preserves the quantities $\pi, \sigma^2$ and $\bb, \bc$. However, it affects $\ba$ by the rotation:
    $$\frac{\tilde{\theta}_{kj}^{(n)} - \tilde{\theta}_{kj}^{(0)}}{d(\tilde{\Psi}, \tilde{\Psi}_0)} = O \frac{\theta_{kj}^{(n)}-\theta_{kj}^{(0)}}{d(\Psi, \Psi_0)} \to O a_{kj}.$$
    Now, if $\theta_{kj}$ is an exposed vertex in the original system, there is a coordinate change as discussed before such that $\theta_{kj}$ is the origin and all other $\theta_{k'j'}'$s have their first coordinate strictly positive (in the new system). This can be seen as follows. Since $\theta_{kj}$ is an exposed vertex, there is a ray originating from $\theta_{kj}$ that goes through the interior of the convex hull of all the vertices. Treat this ray as the positive direction of the first axis in the new system with $\theta_{kj}$ as the new origin and build any orthonormal coordinate system using this specific axis. Moreover, it is easy to see that one can ensure that $\tilde{a}_{kj}$ (in the new system) is non-zero if $a_{kj}$ is non-zero. We call such a coordinate change as problem preserving.
    
    \vspace{1em}
    \textbf{Step 3: } Let the components be renamed such that 
    $$\sigma_1^2 \geq \sigma_2^2 \geq \dots \geq \sigma_K^2.$$
    We start with the first component and argue that the coefficients connected to this are all 0, in particular, $c_1=0$, $a_{1j}=0$ for all $j$ and $b_1=0$. We do this in the three sub-steps below. Note that once we are done, we can then move to the second component and repeat the argument. We only require while dealing with component $k$, that the vertices of $\cS_k$ are exposed with respect to the remaining components. 

    \vspace{0.5em}
    \textbf{Step 3(a): Coefficient $c$} If $c_1\neq 0$, let us suppose $c_1>0$. Pick any vertex of the first component, say $\theta_{11}$ and perform a problem preserving transformation to make $\theta_{11}$ the origin and project on the first coordinate to obtain (see Equation \eqref{eq:first-order-1dim rewrite})
    \begin{align}
        \sum_k \int \underbrace{\left[\frac{\pi_k c_k}{2\sigma_k^4} (x - \btheta_{k}^\top \beta)^2 + \frac{\pi_k}{\sigma_k^2}(\sum_j \beta_j a_{kj}) (x - \btheta_{k}^\top\beta) + \left(b_k - \frac{c_k \pi_k}{2\sigma_k^2}\right)\right]}_{g(x,\beta,\psi_k)}&\phi(x\mid \btheta_{k}^\top\beta, \sigma_k^2)dP_{\beta}(\beta) \nonumber\\
        &=0 \quad \text{a.s. } x\in\bbR,
    \end{align}
    where we dropped writing the $1$ in the last coordinate (to indicate the projection). Thus, in truth, related to the original parameter, $\btheta_k = (\tilde{\theta}_{k11},\tilde{\theta}_{k21},\dots,\tilde{\theta}_{kd1})$, where $\tilde{\theta}$ represents the change of coordinates. Note that by change of coordinates, in this current new system, $\theta_{11}=0$ and all other $\theta_{kj}>2\delta$ (say). Divide the above identity by $x^2 \exp\left\{-\frac{x^2}{2\sigma_1^2} + \frac{2\delta x}{2\sigma_1^2}\right\}$, we get
    \begin{equation}\label{eq:proof_inverse2}
        \sum_k \int \frac{g(x,\beta,\psi_k)}{x^2} \exp \left\{x^2\left(\frac{1}{2\sigma_1^2}-\frac{1}{2\sigma_k^2}\right) + x\left(\frac{\btheta_k^\top\beta}{2\sigma_k^2} - \frac{2\delta}{2\sigma_1^2}\right) - \frac{(\btheta_k^\top\beta)^2}{2\sigma_k^2}\right\} dP(\beta)= 0, \text{ as } x\in \bbR.
    \end{equation}
    For any choice of $\beta$, $g(x,\beta,\psi_k)/x^2 \to \pi_k c_k/2\sigma_k^2$ pointwise as $x\to -\infty$. 
    
    For all $k$, where $\sigma_k^2< \sigma_1^2$, the first term inside the exponential is negative. For all such $k$, as $x\to-\infty$, the $x^2$ term (with negative coefficient) inside the exponential dominates, and the integral goes to 0. For all other $k$ (where $\sigma_k^2=\sigma_1^2$), the integral becomes
    \begin{equation}
        \int \frac{g(x,\beta,\psi_k)}{x^2} \exp \left\{ \frac{x}{2\sigma_1^2}\left(\btheta_k^\top\beta - 2\delta\right) - \frac{(\btheta_k^\top\beta)^2}{2\sigma_k^2}\right\} dP(\beta)
    \end{equation}
    First, note that for any $k\neq 1$, $\theta_k^\top\beta > 2\delta$ for any $\beta$ (by construction). Hence, for any $k\neq 1$ with $\sigma_k^2=\sigma_1^2$, the above integral also goes to 0 as $x\to -\infty$.
    
    Finally, let us focus on $k=1$ term. Dividing the domain of integration into $\{\beta: \btheta_k^\top\beta < \delta\}$, $\{\beta: \delta < \btheta_k^\top\beta < 2\delta\}$ and $\{\beta: \btheta_k^\top\beta > 2\delta\}$, we see that on the third set, taking $x\to-\infty$, the integral goes to 0. On the second, as $x\to\infty$, the limit is positive, while on the first, the limit is $\infty$ (note that $g(x,\beta,\psi_1)/x^2 \to \pi_kc_k/2\sigma_k^2 >0$ for this component). Hence, as $x\to -\infty$, equation \eqref{eq:proof_inverse2} is violated if $c_1>0$, which forces $c_1=0$.

    \vspace{0.5em}
    \textbf{Step 3(b): Coefficient $a$} We have $c_1=0$. Let $a_{11}\neq 0$ (in original system), then since $\theta_{11}$ is exposed, there is a problem preserving transformation so that $\tilde{\theta}_{11}$ is the origin and $\tilde{a}_{111}$ is non-zero in new system. Without loss of generality, suppose $\tilde{a}_{111}>0$. Projecting to the first-coordinate (and modifying notations as before), we get
    \begin{align}\label{eq:inverse_bound_a}
        \sum_k \int \underbrace{\left[\frac{\pi_k c_k}{2\sigma_k^4} (x - \btheta_{k}^\top \beta)^2 + \frac{\pi_k}{\sigma_k^2}(\sum_j \beta_j a_{kj}) (x - \btheta_{k}^\top\beta) + \left(b_k - \frac{c_k \pi_k}{2\sigma_k^2}\right)\right]}_{g(x,\beta,\psi_k)}&\phi(x\mid \btheta_{k}^\top\beta, \sigma_k^2)dP_{\beta}(\beta) \nonumber\\
        &=0 \quad \text{a.s. } x\in\bbR
    \end{align}
    where we have $c_1=0$ and $a_{11}>0$. Choose $\delta>0$ sufficiently small, so that $\theta_{1j}>2\delta$ for all $j$ and $\theta_{kj}>2\delta$ for all $k\neq 1$. Additionally, choose $\delta$ small enough so that for any $\beta\in\Delta^{d-1}$ with $\btheta_1^\top\beta<2\delta$, $\sum_j \beta_j a_{1j} > 0$. Now, we divide by $x \exp\left\{-\frac{x^2}{2\sigma_1^2} + \frac{2\delta x}{2\sigma_1^2}\right\}$ to obtain
    \begin{equation}\label{eq:proof_inverse3}
        \sum_k \int \frac{g(x,\beta,\psi_k)}{x} \exp \left\{x^2\left(\frac{1}{2\sigma_1^2}-\frac{1}{2\sigma_k^2}\right) + x\left(\frac{\btheta_k^\top\beta}{2\sigma_k^2} - \frac{2\delta}{2\sigma_1^2}\right) - \frac{(\btheta_k^\top\beta)^2}{2\sigma_k^2}\right\} dP(\beta)= 0, \text{ as } x\in \bbR.
    \end{equation}
    As before, we evaluate the limit as $x\to - \infty$. For all $k>1$, either $\sigma_k^2<\sigma_1^2$ (in which case, the $x^2$ term in the exponent dominates with a negative coefficient) or $\sigma_1^2=\sigma_k^2$ (in which case, since all $\theta_{kj}>2\delta$ for $k\neq 1$, $\theta_k^\top\beta > 2\delta$ for all $\beta$, in which case the leading term in the exponent is $x$ with a positive coefficient and hence in either cases, $x\to - \infty$ implies that the integrals vanish. So, now let us focus on $k=1$. This term now looks like
    $$\int \left[\frac{\pi_1}{\sigma_1^2}\left(\sum_j \beta_j a_{1j}\right)\left(1 - \frac{\btheta_k^\top\beta}{x}\right) + \frac{b_1}{x}\right] e^{\frac{x}{2\sigma_1^2}\left(\btheta_1^\top \beta - 2\delta\right) - \frac{(\btheta_1^\top\beta)^2}{2\sigma_1^2}}dP(\beta).$$
    Again dividing the domain of integration into three parts $\{\beta: \btheta_1^\top\beta < \delta\}$, $\{\beta: \delta < \btheta_1^\top\beta < 2\delta\}$ and $\{\beta: \btheta_1^\top\beta > 2\delta\}$, the integral over the last part goes to 0 as $x\to - \infty$ using the same argument (since $\theta_{12},\dots,\theta_{1d}>2\delta$). Now, for the region $\{\beta: \delta <\btheta_1^\top\beta<2\delta\}$, by construction $\sum_j \beta_ja_{1j}>0$ and hence, the entire part in the braces in the integrand is positive (note that this part goes to $\pi_1\sum_j \beta_ja_{1j}/\sigma_1^2$ pointwise as $x\to-\infty$), and as a result, the whole integral is positive in this domain. Lastly, for $\{\beta: \btheta_1^\top\beta <\delta\}$, the part in the braces again is positive, but the term in the exponential now explodes since $\btheta_1^\top\beta - 2\delta < -\delta$, hence $e^{x(\btheta_1^\top\beta - 2\delta)} > e^{-x\delta}$ for $x<0$. Thus, taking $x\to\infty$, ensures that the integral over this region goes to $\infty$. Combining all the parts, we see that the integral for $k=1$ goes to $\infty$ as $x\to-\infty$, which contradicts equation \eqref{eq:inverse_bound_a}. Hence, we conclude that $a_{11}=\boldsymbol{0}$ in the original system. We can repeat the same argument for all other $j=2,\dots,d$, to conclude that $a_{11},\dots,a_{1d}=\boldsymbol{0}$.

    \vspace{0.5em}
    \textbf{Step 3(c): Coefficient $b$} Now, we look at $b_1$, noting that $c_1=0$ and all $a_{1j}=\boldsymbol{0}$. If $b_1\neq 0$, we can follow the same line of argument as before - this time, instead, dividing by $\exp\left\{-\frac{x^2}{2\sigma_1^2} + \frac{2\delta x}{2\sigma_1^2}\right\}$. All terms $k>1$ can be dealt with similarly. For $k=1$, we do not have the $c$ and $\ba$ terms and a similar argument as before leads to a contradiction, forcing $b_1=0$.
\end{proof}

\subsection{Proof of Theorem \ref{thm:density_contraction}}\label{app:proof_contraction_rate}
We essentially use Theorem 8.11 from \cite{ghosal2017fundamentals}. We discuss the proof in several steps. Based on Proposition \ref{prop:kl_upper bound}, we know that for any $\psi, \psi'$, $\KL(P_{\psi}\Vert P_{\psi'}) \lesssim \epsilon^2 + C \epsilon$, where $\epsilon=d(\psi,\psi')$. Hence, for sufficiently small $\epsilon$, we have
\begin{equation}\label{eq:hellinger_bound_d}
h^2(p_{\psi}, p_{\psi'}) \leq \frac{1}{2}\KL(P_{\psi}\Vert P_{\psi'}) \lesssim \epsilon^2+C\epsilon.
\end{equation}

\begin{proof}
    We split the proof into parts for clarity.
    
\vspace{1em}
\textbf{Part 1: Control on $K$}
 Recall the model class is $\cP=\{p_{\psi}: \psi:\Psi_{(\cT,S)}\}$ and the prior $\Pi$ on $\Psi_{(\cT,S)}$ induces a prior $\tilde{\Pi}$ on $\cP$. Fix a parameter $\psi_0\in\Psi_{(\cT,S)}$ satisfying the conditions in the statement of the theorem. We need to control the prior mass on the set $B_0(\psi_0,\epsilon)=\{p\in \cP : \KL(p_{\psi_0}\Vert p)\leq \epsilon^2)$.
Consider $\cA = \{\psi\in \Psi_{(\cT,S)}: \norm{\theta_{kj}-\theta_{kj}^{(0)}}<\epsilon, |\pi_k - \pi_k^{(0)}|<\epsilon, |\sigma_k^2 - (\sigma_k^{(0)})^2|<\epsilon \text{ for all } k,j\}$. By condition on the prior, $\Pi(\cA)\geq \tilde{c}\epsilon^{KdD+2K-1}$. Now, by definition of $d$ for any $\psi\in \cA$,
$$d(\psi,\psi_0) \leq C_1 \epsilon \Rightarrow \KL(P_{\psi_0}\Vert P_{\psi}) \lesssim \epsilon^2 + C\epsilon$$
where $C_1=Kd+2K$ and $C$ is another constant. Thus, $\cA\subset \{\psi:\KL(P_{\psi_0}\Vert P_{\psi})\lesssim \epsilon^2 + C\epsilon\}$. Thus,
$$\Pi(B_0(\psi_0, \epsilon^2+C\epsilon)) \gtrsim \epsilon^{KdD+2K-1}.$$

\vspace{1em}
\textbf{Part 2: Model Entropy} It is easy to check that
$$N(\epsilon, \Psi_{(\cT,S)}, d) \lesssim N(\epsilon, \cT, \norm{\cdot})^{Kd}\times N(\epsilon,S, |\cdot|)^K\times N(\epsilon, \Delta^{K-1}, \norm{\cdot}_1) \lesssim \epsilon^{-(KdD + 2K-1)},$$
assuming $\cT$ has dimension same as the ambient space i.e. $D$. Now, let $\psi_1,\dots,\psi_H$ be an optimal cover for $\Psi_{(\cT,S)}$ of radius $\epsilon$ in the metric $d$, i.e. $H=N(\epsilon, \Psi_{(\cT,S)}, d)$. We argue that $p_{\psi_1},\dots,p_{\psi_H}$ is a cover for $\cP$ in the metric $h$ of appropriate radius. Indeed, given any $p_{\psi}\in \cP$, there exists a $\psi_h$ in the original cover so that $d(\psi,\psi_h)<\epsilon$. This implies by Equation \eqref{eq:hellinger_bound_d} that $h^2(p_{\psi}, p_{\psi_h}) \leq C(\epsilon^2 + \epsilon)=\delta$, for some constant $C$. Thus, $\{p_{\psi_1},\dots,p_{\psi_H}\}$ form a $\delta^2-$cover for $\cP$ in the Hellinger metric. So,
$$N(\sqrt{\delta}, \cP, h) \leq N(\epsilon, P_{(\cT,S)}, d) \Rightarrow \log N(\delta, \cP, h) \lesssim \log(1/\epsilon)$$

\vspace{1em}
\textbf{Part 3: Prior Mass around $\psi_0$}  Thanks to Equation \eqref{eq:lower_bound_tv}, for all $\psi\in\Psi_{(\cT,S)}$, we have
\begin{align}
    \sqrt{2}h(p_{\psi},p_{\psi_0}) \geq d_{\TV}(p_{\psi},p_{\psi_0}) \geq C(\psi_0)d(\psi,\psi_0).
\end{align}
Hence,
\begin{align}
    \left\{\psi\in\Psi_{(\cT,S)}: h(p_{\psi},p_{\psi_0}) \leq \epsilon\right\} \subset \left\{\psi\in\Psi_{(\cT,S)}: d(\psi,\psi_0) \leq \frac{\epsilon}{C(\psi_0)}\right\}
\end{align}
Thus, for any $j$, 
\begin{align}
    \Pi\left(h(p_{\psi},p_{\psi_0}) \leq 2j\epsilon\right) &\leq \Pi\left(d(\psi,\psi_0) \leq \frac{2j\epsilon}{C(\psi_0)}\right) \\
    &\lesssim (j\epsilon)^{KDd+2K-1},
\end{align}
via the condition on the prior $\Pi$.

\vspace{1em}
\textbf{Part 4: Density Contraction} We verify the conditions of Theorem 8.11 \cite{ghosal2017fundamentals} using the results we established above. In particular, we show with $\epsilon_n=M\sqrt{\log(n) / n}$ (for sufficiently large $M$), the following holds
\begin{align}
    \frac{\Pi(j\epsilon_n\leq h(p_{\psi},p_{\psi_0})\leq 2j\epsilon_n]}{\Pi(B_0(\psi_0,\epsilon_n))} &\lesssim \exp\left(c_1 jn\epsilon_n^2\right) \\
    \sup_{\epsilon\geq \epsilon_n} \log N\left(\xi\epsilon, \{p\in\cP: h(p,p_{\psi_0})\leq 2\epsilon\}, h\right)&\lesssim n\epsilon_n^2.
\end{align}
For the first, we use the results from part (1) and part (3) above. In particular, by part (3) we have
$$\Pi(j\epsilon\leq h(p_{\psi},p_{\psi_0})\leq 2j\epsilon) \leq \Pi(h(p_{\psi},p_{\psi_0})\leq 2j\epsilon) \lesssim (j\epsilon)^{p_*}$$
where $p_*=KDd+2K-1$, the number of free parameters in the model. Also, part (1) shows that
$$\Pi(B_0(\psi_0, \epsilon))\gtrsim \epsilon^{2p_*}$$
when $\epsilon$ is sufficiently small (note that $\epsilon_n\downarrow 0$, hence for sufficiently large $n$) since the dominant term in the upper bound of the KL is the linear term. Note that $n\epsilon_n^2 = M^2 \log (n)$. Combining the two displays above, we get
$$\frac{\Pi(j\epsilon_n\leq h(p_{\psi},p_{\psi_0})\leq 2j\epsilon_n]}{\Pi(B_0(\psi_0,\epsilon_n))} \lesssim j^{p_*}\epsilon_n^{-p_*} \lesssim \exp(c_1 j n \log(n))$$
for all $n\geq n_0$ for some sufficiently large $n_0$.

For the second condition regarding the local entropy, we can use part (2) to get
\begin{align*}
    \sup_{\epsilon\geq \epsilon_n} \log N\left(\xi\epsilon, \{p\in\cP: h(p,p_{\psi_0})\leq 2\epsilon\}, h\right)&\leq \log N(\xi\epsilon_n, \cP, h) \lesssim n\epsilon_n^2
\end{align*}
since $\log(1/\epsilon_n) = \log (1/M) + \frac{1}{2}\log(n/\log(n)) \leq M^2\log n =n\epsilon_n^2$ again for all large enough $n$ so that the dominant term in the upper bound of $h$ is the linear term. Thus, both the conditions of the stated theorem are verified and the result follows.

\vspace{1em}
\textbf{Part 5: Parameter Contraction} Thanks to the previous part, we have established
$$\Pi\left(h(p_{\psi}, p_{\psi_0})\geq M\epsilon_n \mid X_1,\dots,X_n\right)\to 0  \text{ in } P_{\psi_0}^{\infty}-\text{probability}\text{ as } n\to \infty.$$
Based on part (3) in the proof, $\sqrt{2}h(p_{\psi},p_{\psi_0}) \geq C(\psi_0) d(\psi,\psi_0)$ for some constant $C$ depending only on $\psi_0$, which gives
$$\{\psi: d(\psi,\psi_0)\geq M'\epsilon_n\} \subset \{h(p_{\psi},p_{\psi_0})\geq M\epsilon_n\}$$
with $M'=\sqrt{2}M / C(\psi_0)$. The posterior probability of the right-side of the above display goes to 0, from part (4), hence we conclude that the posterior probability of the left-hand-side also goes to 0, i.e.
$$\Pi\left(d(\psi,\psi_0)\geq M'\epsilon_n \mid X_1,\dots,X_n\right)\to 0  \text{ in } P_{\psi_0}^{\infty}-\text{probability}\text{ as } n\to \infty.$$
\end{proof}

\section{Algorithms}

\subsection{Single Component}
\subsubsection{Spectral Algorithm}
\label{app:spectral_algorithm}
We first recall the moments of Dirichlet distribution. Let $P\equiv\text{Dir}(\balpha)$, where $\balpha\in\bbR_+^d$. Define $\overline{\alpha}=\sum_j \alpha_j$ and $\tilde{\alpha}=\balpha / \overline{\alpha} \in\Delta^{d-1}$. Let $\beta\sim P$, then the moment tensors up to order 3 are given by

\begin{align*}
    \bM^{(1)} = \bbE[\beta] &\Rightarrow M^{(1)}_j = \bbE[\beta_j] = \tilde{\alpha}_j \\
    \bM^{(2)} = \bbE[\beta\beta^\top] &\Rightarrow M^{(2)}_{j_1,j_2} = \bbE[\beta_{j_1}\beta_{j_2}] = \frac{\delta_{ij}\tilde{\alpha}_i + \overline{\alpha}\tilde{\alpha_i}\tilde{\alpha}_j}{\overline{\alpha}+1} \\
    \bM^{(3)} = \bbE[\beta\beta^\top\otimes \beta] &\Rightarrow M^{(3)}_{j_1,j_2,j_3} = \bbE[\beta_{j_1}\beta_{j_2}\beta_{j_3}]
\end{align*}
Note that the second moment can be written as
\begin{equation}\label{eq:2nd_moment_decomposition}
    \bM^{(2)} = \frac{1}{\bar{\alpha}+1}\left[\diag(\tilde{\alpha}) + \overline{\alpha} \bM^{(1)}(\bM^{(1)})^\top\right] 
\end{equation}
Using the moment decomposition in equation (10) \citep{do2025dirichlet}, we have the following form for $\bM^{(3)}$:
\begin{equation}
   \overline{\alpha}^{[3]} M^{(3)}_{j_1,j_2,j_3} = 2\overline{\alpha} \tilde{\alpha}_{j_1} \delta_{j_1,j_2,j_3} + \overline{\alpha}^{[2]}\overline{\alpha} \left(M^{(1)}_{j_1} M^{(2)}_{j_2,j_3}+M^{(1)}_{j_2} M^{(2)}_{j_3,j_1}+M^{(1)}_{j_3} M^{(2)}_{j_1,j_2}\right) - 2\overline{\alpha}^3 M^{(1)}_{j_1}M^{(1)}_{j_2}M^{(1)}_{j_3}.
\end{equation}
This can be expressed as a matrix, taking the action of $\bM^{(3)}$ on a vector $w\in \bbR^d$, i.e., 
\begin{equation}
    \begin{aligned}\label{eq:third_moment_dir}
        \bM^{(3)}(w) &= \bbE[\beta\beta^\top \innerprod{\beta,w}] \\
        &=\frac{2}{\bar{\alpha}(\bar{\alpha}+1)(\bar{\alpha}+2)} \diag(\alpha)\diag(w) - \frac{2\bar{\alpha}^2}{(\bar{\alpha}+1)(\bar{\alpha}+2)}\innerprod{\bM^{(1)},w} \bM^{(1)}(\bM^{(1)})^\top \\
        &\hspace{5em} + \frac{\bar{\alpha}}{\bar{\alpha}+2}\left[\bM^{(2)}w(\bM^{(1)})^\top + \bM^{(1)}(\bM^{(2)}w)^\top + \innerprod{\bM^{(1)}, w} \bM^{(2)}\right].
    \end{aligned}
\end{equation}

Let us now look at the moment tensors of a Gaussian distribution. Let $Y\sim \cN_D(\mu,\Sigma)$ with components $Y=(Y_1,\dots,Y_D)$, then we have
\begin{align*}
    \bbE[Y] = \mu &\Rightarrow \bbE[Y_i] = \mu_i\\
    \bbE[YY^\top] = \Sigma + \mu\mu^\top &\Rightarrow \bbE[Y_{i_1}X_{i_2}] = \Sigma_{i_1,i_2} + \mu_{i_1}\mu_{i_2}
\end{align*}
and for the third moment, we know that odd moments of centered Gaussian are 0. Thus, starting with $\bbE[(Y_i- \mu_i)(Y_j-\mu_j)(Y_j-\mu_k)$, little algebra yields
\begin{equation}
    \bbE[Y_{i_1}Y_{i_2}Y_{i_3}] = \left(\mu_{i_1}\Sigma_{i_2 , i_3} + \mu_{i_2}\Sigma_{i_3 , i_1} + \mu_{i_3}\Sigma_{i_1 , i_2}\right) + \mu_{i_1}\mu_{i_2}\mu_{i_3}.
\end{equation}
This can be expressed succintly as the action of the moment tensor on a vector. Given $v\in\bbR^D$, we are interested in $\bbE[YY^\top\innerprod{Y,v}]$ (which is a $D\times D$ matrix). Using the above expression, this has the following form
\begin{equation}
    \bbE[YY^\top\innerprod{Y,v}] = \Sigma v \mu^\top + \mu v^\top \Sigma + \left(\Sigma + \mu\mu^\top\right)\mu^\top v.
\end{equation}

Now, let us start discussing the moments of our model. Under the model, with $p_{\beta}$ is Dirichlet with parameter $\balpha$, we have $X|\beta \sim \cN(\Theta\beta, \sigma^2 I)$. Thus, we have
\begin{align}
    \bbE[X] &= \Theta \bM^{(1)} \\
    \bbE[XX^\top] &= \Theta \bM^{(2)}\Theta^\top + \sigma^2 I \\
    &= \frac{1}{\overline{\alpha}+1}\Theta\diag(\tilde{\alpha})\Theta^\top + \frac{\bar{\alpha}}{\bar{\alpha}+1}(\Theta\bM^{(1)})(\Theta \bM^{(1)})^\top + \sigma^2 I \nonumber\\
    &\Rightarrow \Theta \diag(\tilde{\alpha})\Theta^\top = (\overline{\alpha}+1)(\bbE[XX^\top]-\sigma^2 I) - \bar{\alpha} \bbE[X]\bbE[X]^\top. \label{eq:second_moment_dir}
\end{align}
where the second moment used the decomposition \eqref{eq:2nd_moment_decomposition}. For the third moment, we can again look at its action on a vector $v\in \bbR^D$, i.e., $\bbE[XX^\top \innerprod{X,v}]$. Conditionally, we have
\begin{align*}
    &\bbE[XX^\top\innerprod{X,v}|\beta] = \sigma^2 \left( v \beta^\top \Theta^\top + \Theta\beta v^\top\right) + \left(\Theta\beta\beta^\top\Theta^\top + \sigma^2 I_D\right) \beta^\top\Theta^\top v \\
    &\Rightarrow \bbE[XX^\top \innerprod{X,v}] = \sigma^2\left(v \bbE[X]^\top + \bbE[X]v^\top\right) + \sigma^2 \bbE[X]^\top v I_D + \Theta \bbE[\beta\beta^\top \innerprod{\beta, \Theta^\top v}]\Theta^\top.
\end{align*}
 To compute the expectation involving the third moment of $\beta$ above, we can use equation \eqref{eq:third_moment_dir}, with $w=\Theta^\top v\in\bbR^d$, to get
 \begin{align*}
     \bbE[XX^\top\innerprod{X,v}] &= \sigma^2\left(v (\bM^{(1)})^\top + \bM^{(1)}v^\top + \innerprod{\bM^{(1)}, v} I_D\right) \\
     & \quad+ \frac{2}{\bar{\alpha}(\bar{\alpha}+1)(\bar{\alpha}+2)}\Theta\left(\diag(\Theta^\top v)\diag(\alpha)\right)\Theta^\top \\
     & \quad+\frac{\bar{\alpha}}{\bar{\alpha}+2}\Theta \bM^{(2)}\Theta^\top (\Theta\bM^{(1)})^\top v - \frac{2\bar{\alpha}^2}{(\bar{\alpha}+1)(\bar{\alpha}+2)} \Theta \bM^{(1)} (\Theta\bM^{(1)})^\top (\Theta \bM^{(1)})^\top v \\
     & \quad+\frac{\bar{\alpha}}{\bar{\alpha}+2}\Theta\bM^{(2)}\Theta^\top v (\Theta\bM^{(1)})^\top + \frac{\bar{\alpha}}{\bar{\alpha}+2}\Theta \bM^{(1)} v^\top \Theta \bM^{(2)}\Theta^\top.
\end{align*}

Define
\begin{align*}
    \mu &= \bbE[X]=\Theta \bM^{(1)} \\
    S =& \bbE[XX^\top]-\sigma^2 I = \Theta\bM^{(2)}\Theta^\top,
\end{align*} we can express the above as
\begin{align*}
\bbE[XX^\top\innerprod{X,v}] &= \sigma^2\left(v \mu^\top + \mu v^\top + \innerprod{\mu, v}  I_D\right) + \frac{2}{\bar{\alpha}(\bar{\alpha}+1)(\bar{\alpha}+2)}\Theta\left(\diag(\Theta^\top v)\diag(\alpha)\right)\Theta^\top \\
  & \quad - \frac{2\bar{\alpha}^2}{(\bar{\alpha}+1)(\bar{\alpha}+2)} \mu\mu^\top \innerprod{\mu,v} + \frac{\bar{\alpha}}{\bar{\alpha}+2} \left(\innerprod{\mu,v} S  + S v\mu^\top + \mu v^\top S\right)
 \end{align*}

Thus, letting
\begin{align}
    \text{Pairs} &:= S - \frac{\bar{\alpha}}{\bar{\alpha}+1} \mu\mu^\top, \label{eq:pairs}\\
    \text{Triples}(v) &:= \bbE[XX^\top\innerprod{X,v}] - \sigma^2\left(v \mu^\top + \mu v^\top + \innerprod{\mu, v}  I_D\right) \label{eq:triples} \\
    &\quad -\frac{\bar{\alpha}}{\bar{\alpha}+2}\left(\innerprod{\mu,v} S  + S v\mu^\top + \mu v^\top S\right) + \frac{2\bar{\alpha}^2}{(\bar{\alpha}+1)(\bar{\alpha}+2)} \innerprod{\mu,v} \mu\mu^\top. \nonumber
\end{align}

Note that the above definitions yield the following diagonal structures
\begin{align*}
    \text{Pairs} &= \frac{1}{\bar{\alpha}(\bar{\alpha}+1)}\Theta \diag(\alpha) \Theta^\top \\
    \text{Triples}(v) &= \frac{2}{\bar{\alpha}(\bar{\alpha}+1)(\bar{\alpha}+2)}\Theta \diag(\Theta^\top v)\diag(\alpha)\Theta^\top,
\end{align*}
which exactly matches the pairs and triples in Lemma 4.3 \cite{anandkumar2012spectral}. Hence, we can now use the same technique in Algorithm 3 \cite{anandkumar2012spectral}, or the empirical version (Algorithm 5) using the empirical moments to estimate $\mu, \text{Pairs}$ and $\text{Triples}$. Note that the forms of the pairs and triples is different from \cite{anandkumar2012spectral}, owing to the Gaussian kernel considered here. Since $\Theta\bM^{(2)}\Theta^\top$ has rank $d<D$, we can estimate $\sigma^2$ directly, by using a SVD on the data matrix directly and use this estimate in all the steps of the algorithm.

\subsubsection{Approximate Algorithm using Gaussian Approximation}
\label{app:approx_gaussian_algorithm}
We employ the following heuristic algorithm. By approximating $p_{\beta}$ with a Gaussian distribution (matching the first two moments), i.e. $p_{\beta}\approx \cN(\mu_0,\Sigma_0)$, the resulting marginal distribution for the data becomes $\cN(\Theta \mu_0, \Theta\Sigma_0\Theta ^\top+\sigma^2 I)$. By estimating $\sigma^2$ using a SVD on the data matrix, we can plug it in for moment matching to get the following equations:
\begin{align*}
    \Theta\mu_0 &= \bar{X} \\
    \Theta\Sigma_0\Theta^\top &= \Cov(X) - \hat{\sigma}^2 I = USU^\top \text{ (eig)}.
\end{align*}
Let $A=\tilde{U}\tilde{S}^{1/2}$ taking the top $d-1$ eigenvalues and the corresponding eigenvectors from the above decomposition. From the above (note that $\Sigma_0$ has rank $d-1$), we can solve
$$\tilde{\Theta} \begin{bmatrix} A_0 &  \mu_0\end{bmatrix} = \begin{bmatrix} A & \bar{X}\end{bmatrix}$$
where $A_0=\tilde{V}\tilde{S}_2^{1/2}$ corresponding to the top $d-1$ eigenvalues and eigenvectors of $\Sigma_0=VS_2V^\top$. The above display is solved for $\tilde{\Theta}$. Let $\tilde{\theta}_1,\dots,\tilde{\theta}_d$ denote the columns. However, to resolve the rotational invariance for this Gaussian approximation, we employ the following heuristic correction. Center the data by subtracting the mean and let $w_1,\dots,w_d$ be the cluster centers for a K-Means clustering (with $d$ clusters) applied to the centered data, and denote $W$ to be the $D\times d$ matrix containing these as the columns. We minimize $\norm{\hat{\Theta} O - W}_{F}$ (the Forbenius norm) over $d\times d$ orthogonal matrices $O$ (this is done numerically using the solution for orthogonal Procrustes problem), where the columns of $\hat{\Theta}$ contain $\tilde{\theta}_j - \tilde{\Theta}\mu_0$ (i.e., the centered version). Essentially, we center to bring the problem down to the appropriate linear subspace and then the orthogonal matrix estimates a suitable rotation in this subspace to match the estimated vertices with a K-Means clustering, noting that such a clustering would likely find centers that are aligned towards the vertices (this idea is taken from VLAD \cite{yurochkin2019dirichlet}). The final estimate is then computed as $\Theta=\tilde{\Theta}O$.

\subsubsection{MCMC Algorithms for Single Component}
\label{app:MCMC_algorithm_single}
We implement two MCMC algorithms for this problem with a single component. For both of these algorithms, we treat $\alpha\in\bbR^d_+$ (parameter of the underlying Dirichlet distribution) as unknown and include its updates.

\textbf{MCMC with augmented latent variables:} In this algorithm, we introduce the continuous latent variables $\beta_1,\dots,\beta_n$ corresponding to the data $X_1,\dots,X_n$. The posterior distribution is given by

$$p(\bbeta, \psi, \alpha|\bX) \propto \text{pr}(\pi,\sigma^2,\Theta,\alpha)\times \prod_{i=1}^n p(\beta_i|\alpha)p(X_i|\Theta,\beta_i,\sigma^2)$$
where the first term is the prior for the parameters and $p(\beta|\alpha)$ is the Dirichlet density. Conditional on $\bbeta$, the posterior distributions for $\Theta, \sigma^2$ have closed form expressions under the Normal-Inverse Gamma prior for $(\Theta,\sigma^2)$, i.e., $\sigma^2\sim \text{Inv Gamma}(a_0/2, b_0/2)$, $\theta_j\sim \cN(\boldsymbol{0}, \sigma_0^2 I)$ independently for $j\in[d]$. These Gibbs updates are given by $p(\sigma^2|\text{others})\equiv \text{Inv Gamma}(a_n/2, b_n/2)$ and $p(\Theta|\text{others})\equiv \cM\cN (\mu_n, \Sigma_n, I)$ (matrix normal distribution), where
\begin{align*}
    a_n &= a_0 + n, \quad b_n = b_0 + \sum_i \norm{X_i - \Theta\beta_i}^2 \\
    \mu_n &= (\bX^\top \bbeta) \Sigma_n / \sigma^2,\quad  \Sigma_n = \left(\frac{1}{\sigma_0^2} I_d + \frac{1}{\sigma^2} \bbeta^\top\bbeta\right)^{-1}
\end{align*}
where $\bbeta$ is $n\times d$ matrix containing the $\beta_1,\dots,\beta_n$. These updates are precisely that for the multivariate Bayesian regression, since conditional on $\beta$, the joint has the exactly same structure -- recall $p(X_i|\Theta,\beta_i,\sigma^2) = \phi_D(X_i|\Theta\beta_i, \sigma^2 I)$. For $\alpha$ and $\bbeta$, we use Metropolis-Hastings steps. For $\alpha$, we use a random walk proposal. For $\beta_j'$s, we use a Dirichlet proposal with parameter $B\times \beta_j$ (i.e., centered at the current $\beta$), where the hyperparameter $B$ controls the concentration (similar to the noise parameter in a Gaussian random walk proposal).

\textbf{MCMC with latent variables marginalized:} In this case, we use a Grouped Independent Metropolis Hastings MCMC for updating the parameters. This has the same form as the MCMC described in Section \ref{app:multiple_component_algorithms} below, with $K=1$. The only other difference is that we keep $\alpha$ as a learnable parameter and use random-walk Metropolis-Hastings for this.

\subsection{Multiple Components}\label{app:multiple_component_algorithms}
We implement three algorithms - the first one is a EM-type algorithm based on the Gaussian approximation (see ...), the second one is MCMC (with latent variables marginalized) and finally, an EM algorithm for an approximating model. In these cases, we assume that the underlying $P_{\beta}$ is a known distribution (in particular, a symmetric Dirichlet distribution with known $\alpha$). As the baseline, we employ \textit{Geometric Algorithm} - which consists of first using Mixture of Probabilistic PCA to isolate the affine subspaces and then based on the clustering, fitting VLAD on each component.

\subsubsection{MCMC with Grouped Independent Metropolis Hastings}\label{subsec:mcmc_multiple_components}
For parameters $\xi=(\pi,\bTheta,\sigma^2)$, the target distribution is
$$p(\xi|\bX) \propto  \underbrace{p(\xi)}_{\text{prior}}\times \underbrace{\left(\prod_{i=1}^n \sum_k \pi_k \int \phi(X_i\mid \Theta_k\beta,\sigma_k^2 I) P_{\beta}(d\beta)\right)}_{\text{likelihood } \cL}.$$
The Monte Carlo within Metropolis algorithm proceeds as a regular Metropolis-Hastings MCMC algorithm, but approximates the intractable likelihood via a Monte Carlo approximation
$$\cL(\xi|\bX) \approx \prod_i \sum_k \sum_{j\in[M]} \frac{\pi_k}{M} \phi(X_i\mid \Theta_k \beta_{i,j},\sigma_k^2 I)=:\tilde{\cL}(\xi|\bX,\bbeta)$$
where $\beta_{i,j}$ are iid samples from $P_{\beta}$ for $i\in[n], j\in[M]$ (denoting this entire collection of latent variables as $\bbeta$). This approximation is used while computing the Metropolis-Hastings ratio. In particular, given current state $\xi(t)$, a proposal is made $\tilde{\xi}\sim q(\xi'|\xi)$ and the ratio is computed as
$$r = \frac{p(\tilde{\xi})\tilde{\cL}(\tilde{\xi}|\bX,\tilde{\bbeta})}{p(\xi_t)\tilde{\cL}(\xi_t|\bX,\bbeta)}\times \frac{q(\xi_t|\tilde{\xi})}{q(\tilde{\xi}|\xi_t)}$$
using independent samples $\bbeta, \tilde{\bbeta}$ to compute the approximate likelihood terms. Thus, at each iteration, two independent copies of $nM$ samples from $P_{\beta}$ are required for this.

In the Grouped Independent MH, the latent variables $\beta$ are reused and updated together with the parameters. This can be seen as a Markov chain on $(\xi,\bbeta)$ jointly (recall the usage of $\bbeta$ to denote the entire collection $\{\beta_{i,j}:i\in[n],j\in[M]\}$). Suppose, the chain is at $(\xi_t, \bbeta_t)$ at some point of time. A new proposal is made as $\tilde{\xi}\sim q(\xi'|\xi)$ and $\tilde{\bbeta}\sim P_{\beta}$ ($nM$ iid copies). Then the MH ratio is computed similarly as before
$$r = \frac{p(\tilde{\xi})\tilde{\cL}(\tilde{\xi}|\bX,\tilde{\bbeta})}{p(\xi_t)\tilde{\cL}(\xi_t|\bX,\bbeta_t)}\times \frac{q(\xi_t|\tilde{\xi})}{q(\tilde{\xi}|\xi_t)}.$$
If this proposal is accepted, then set $(\xi_{t+1},\bbeta_{t+1})=(\tilde{\xi}, \tilde{\bbeta})$, else we retain the same state. Note the difference from the previous case -- here, we only draw one collection of $nM$ samples per iteration. If the proposal is rejected, we keep the latent variables $\bbeta_t$. For implementation purpose, note that it is not required to store the samples in $\bbeta_t$, but only the likelihood term $\tilde{\cL}(\xi_t|\bX,\bbeta_t)$.

The proposal distribution $q(\xi'|\xi)$: For $\Theta_1,\dots,\Theta_K$ and $\sigma_1^2,\dots,\sigma_K^2$ we use a Gaussian random walk proposal (the proposal variances are hyperparameters of the MCMC). Since $\sigma_k^2$ is required to be positive, we can take the absolute value of the proposal. For the mixture probabilities, since $\pi$ is constrained to be on $\Delta^{K-1}$, the usual Gaussian random walk does not work. Instead, we can use a random walk using a singular Gaussian distribution on the affine hull of $\Delta^{K-1}$ as the proposal. In such a case, the ratio of $q(\xi_t|\tilde{\xi})/q(\tilde{\xi}|\xi_t)$ is 1 and can be ignored.

Choice of prior: While the above algorithm is not specific to any choice of prior, for concreteness, we use the following prior structure
$$p(\xi) = \text{Dir}_{\pi_0}(\pi) \times \prod_{k,j} \cN(\theta_{kj}\mid \boldsymbol{0}, \sigma_0^2I)\times \prod_k \text{Exp}(\sigma_k^2 \mid \lambda_0)$$
where $\pi_0\in\bbR_+^K, \sigma_0^2>0$ and $\lambda_0>0$ are prior hyperparameters for the MCMC.

\subsubsection{EM with Gaussian Approximation}\label{subsec:gaussian_approx_multiple_components}
We use the Gaussian approximation of $p_{\beta}$ (discussed in \ref{app:approx_gaussian_algorithm}) under a EM framework, to develop an efficient algorithm, similar to the Generalized EM algorithm for Mixture of Probabilistic PCA model \cite{tipping1999mixtures}. Using the Gaussian approximation, the model becomes $\sum_k \pi_k \cN(x|\Theta_k\mu_0, \Theta_k\Sigma_0\Theta_k^\top + \sigma_k^2)$. Given current parameters $\bTheta,\pi,\Sigma$, in the E-step, we compute the weights
$$w_{ik} \propto \pi_k\phi(X_i|\Theta_k\mu_0, \Theta_k\Sigma_0\Theta_k^\top+\sigma_k^2 I_D)$$
normalized so that $\sum_k w_{ik}=1$ for each $i$. In the M-step, we use the same estimation method as discussed in \ref{app:approx_gaussian_algorithm}, replacing the moments by weighted moments, i.e., for component $k$, use $\bar{X}_{w_k} =\sum_i w_{ik}X_i$ in place of $\bar{X}$ and $\sum_i w_{ik}(X_i-\bar{X}_{w_k})(X_i-\bar{X}_{w_k})^\top$ in place of $\Cov(X)$. Since the E-step stays the same irrespective of any rotations in $\Theta$ (since the mean and covariance are same), we only perform the optimal matching rotation at the last iteration (upon convergence).

\begin{figure}
    \centering
    \includegraphics[width=0.75\linewidth]{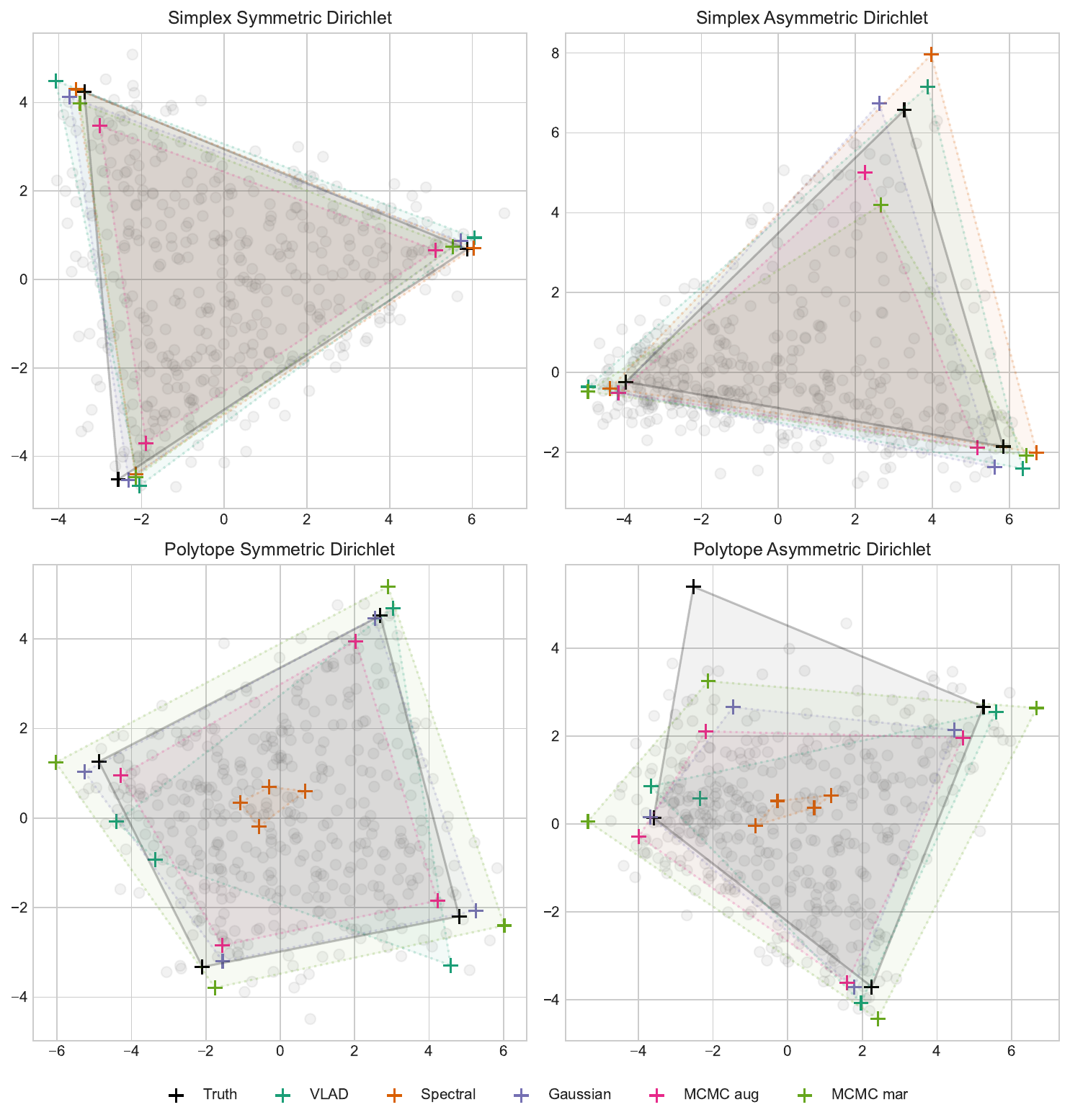}
    \caption{Settings for single component simulations in Section \ref{sec:simulations}}
    \label{fig:single_component_settings}
\end{figure}

\begin{figure}
    \centering
    \includegraphics[width=0.8\linewidth]{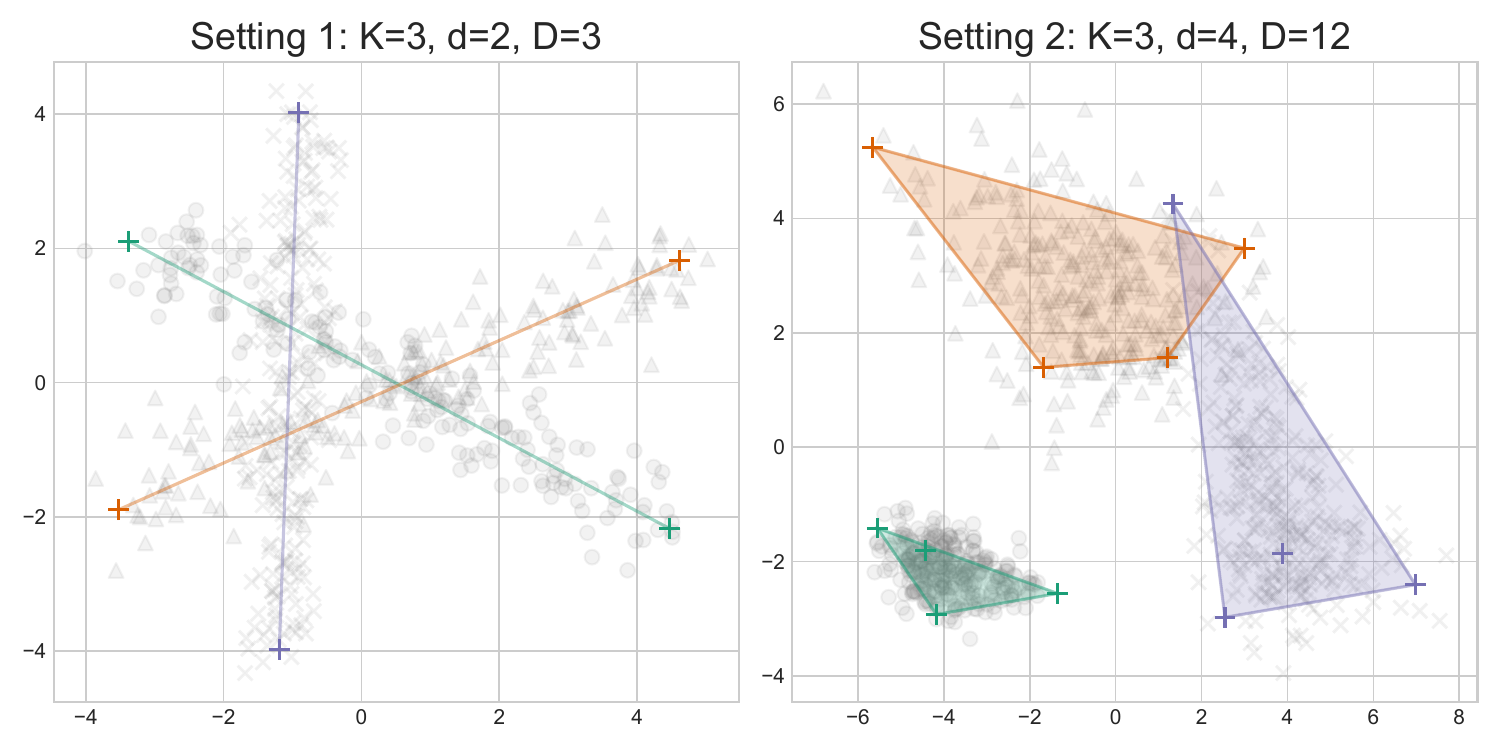}
    \caption{Settings for Simulations in Section \ref{sec:simulations} for multiple components - visualization via PCA using first 2 principle components}
    \label{fig:multiple_components_setting}
\end{figure}

\subsubsection{Geometric Algorithm}\label{subsec:geometric_multiple_components}
This algorithm first fits a Mixture of Probabilistic PCA model on the data to get a clustering and then uses VLAD on each cluster separately to estimate the component polytopes.

\subsection{Simulation Details}
 \begin{table}
    \centering
\begin{tabular}{cccc}
 \hline
 Algorithm & $n=200$  & $n\approx 1000$ & $n=4000$\\
 \hline
 Gaussian & 0.81 & 1.47 & 1.64 \\
 MCMC & 7.60 & 7.83 & 9.46 \\
 EM(50) & 0.55 & 0.62 & 0.69 \\
 EM(100) & 0.56 & 0.65 & 0.66 \\
 EM (400) & 0.67 & 0.69 & 0.76 \\
 \hline
\end{tabular}
\caption{Average run time (in seconds) for the algorithms in Setting 2 (multiple components)}
\label{table:algorithms_time}
\end{table}

\begin{figure}
    \centering
    \includegraphics[width=0.38\linewidth]{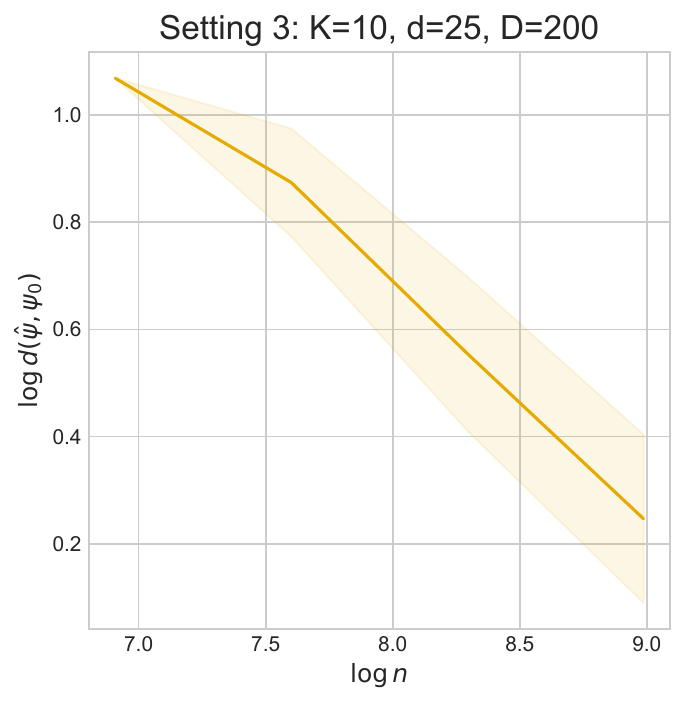}
    \includegraphics[width=0.4\linewidth]{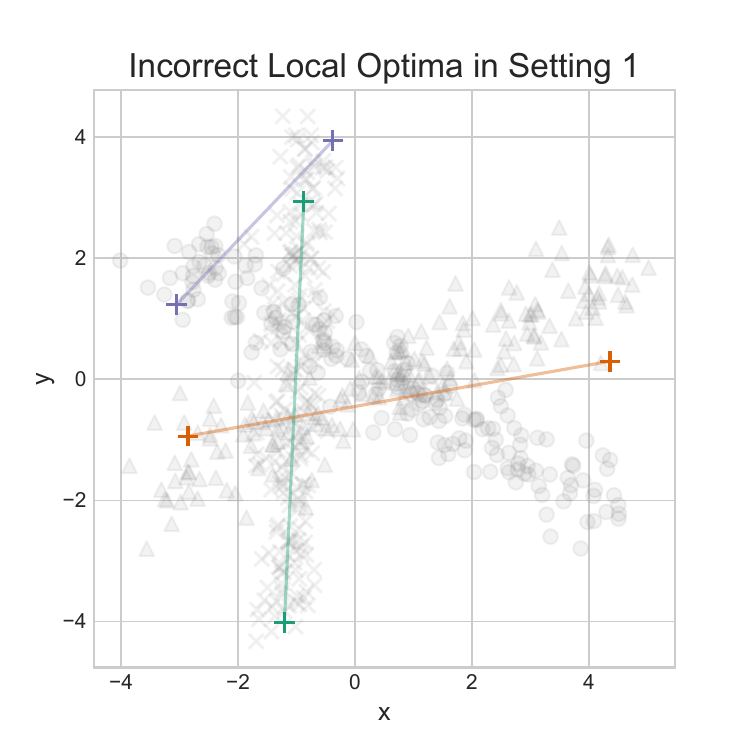}
    \caption{(Left) Simulation results for Setting 3, (Right) Illustration of a Local Model in Setting 1}
    \label{fig:setting3_local_mode}
\end{figure} 
The settings for the simulation studies for single component and multiple components in Section \ref{sec:numerical study} are shown in Figures \ref{fig:single_component_settings} and \ref{fig:multiple_components_setting} respectively. Results for setting 3 in the multiple component case (only using the Gaussian approximation algorithm) are shown in Figure \ref{fig:setting3_local_mode} (left). For the MCMC algorithms, the tuning parameters include the prior hyperparameters and the proposal hyperparameters. The algorithm is relatively stable with respect to the former, but the acceptance rate for the Metropolis step depends crucially on the proposal hyperparameters. Particularly in high dimensions, it is not straight-forward to tune. This issue can be mitigated with more sophisticated proposals (rather than simple random walks), i.e., using Langevin or Hamiltonian Monte Carlos. In our case, we ran the MCMC for 20000 iterations for each experiment and used the last 5000 (with a thinning of 100) to get 50 samples, based on which the metrics were computed.

We also provide a time comparison for the algorithms used for the multiple component simulations (for Setting 2). The results are shown in Table \ref{table:algorithms_time}, where all algorithms were implemented using Jax in Python and GPU was used for training. We do not include the Geometric Algorithm, for which we used an existing implementation (not in Jax). The EM algorithm is the fastest -- however, we note that for very large dataset and large $M$, our implementation is not efficient in terms of memory consumption and the current version runs into OOM-issues, which can be handled by a more careful and efficient implementation.

\subsubsection{Mis-specified Latent Distribution}
\begin{figure}
    \centering
    \includegraphics[width=0.9\linewidth]{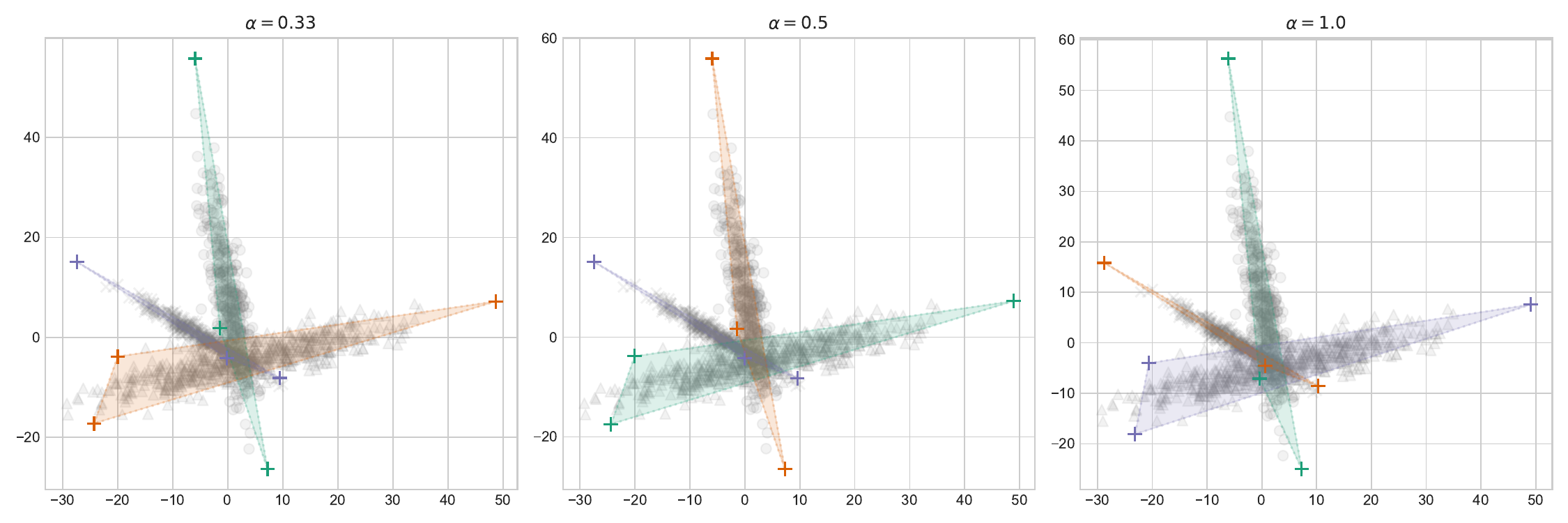}
    \caption{Result of Using Approximate EM Algorithm (assuming Dirichlet latent mixing) when the true latent mixing distribution is Gaussian (corresponding to MPPCA) - each column corresponds to different choice of $\alpha$}
    \label{fig:misspecified1}
\end{figure}

\begin{figure}
    \centering
    \includegraphics[width=0.9\linewidth]{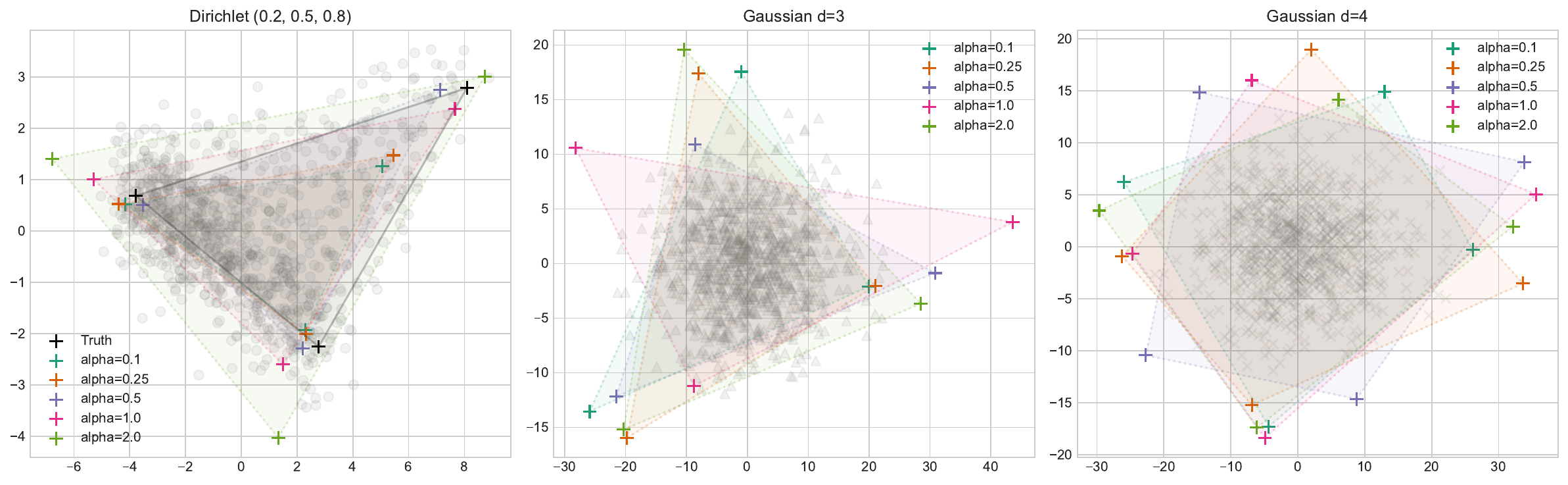}
    \caption{Results of using Approximate EM algorithm when the latent measure is mis-specified}
    \label{fig:misspecified2}
\end{figure}

The previous simulations all used the same $p_{\beta}$ as used in the data generating mechanism. We also perform simple simulations to see the effect of a mis-specified $p_{\beta}$. Figure \ref{fig:misspecified1} shows the result of using the EM algorithm (assuming Dirichlet $p_{\beta}$), when the ground-truth $p_{\beta}$ is Gaussian (i.e., the true data generating model is a Mixture of Probabilistic PCA). We see that the method correctly captures the three clusters and is quite robust to the choice of $\alpha$ (the Dirichlet parameter). Not only does it capture the component affine spaces, it also provides an interpretable clustering of the data.

Figure \ref{fig:misspecified2} takes a closer look at a single component, where the title of the figure shows the true $p_{\beta}$ distribution and the legends depict the choice of $\alpha$ chosen for the EM algorithm. The left-most plot consider a mis-specified Dirichlet distribution (truth is asymmetric) and the right two plots consider the case when $p_{\beta}$ is Gaussian. The rank of the ground-truth covariance is 2 and it is fit with $d=3$ (middle) and $d=4$ (right) vertices respectively. The results are mostly robust to the choice of $\alpha$ and we notice that this choice acts as a regularizer, with lower $\alpha$ giving a more tighter bounding convex polytope for the data. For the Gaussian case, it is worth mentioning that due to the rotational invariance of the low-rank covariance, there is no notion of ground-truth vertices -- however, in each case, the affine hull of the estimated vertices provides a good estimate for the underlying column space of the covariance.

\end{appendix}
\end{document}